\newcommand{\overbar}[1]{\mkern 1.5mu\overline{\mkern-1.5mu#1\mkern-1.5mu}\mkern 1.5mu}
\newtheorem{theorem}{Theorem}
\newtheorem{lemma}{Lemma}
\newtheorem{cor}{Corollary}
\newcommand{\dd}{\mathrm{d}}
\newcommand{\eps}{\epsilon}
\newcommand{\mres}{\mathbin{\vrule height 1.6ex depth 0pt width
0.13ex\vrule height 0.13ex depth 0pt width 1.3ex}}
\newcommand{\Frac}[2]{\displaystyle \frac{#1}{#2}}
\newcommand{\Div}{\, \nabla \cdot}
\newcommand{\bb}{\mathbf{b}}
\newcommand{\be}{\mathbf{e}}
\newcommand{\bulk}{Q_L}
\newcommand{\infbnd}{\underline{Q_L}}
\newcommand{\R}{\mathbb R}
\newcommand{\N}{\mathbb N}
\newcommand{\shift}{\beta}
\newcommand{\per}{\text{per}}
\newcommand{\sign}{\text{sign}\,}
\newcommand{\Aper}{\mathcal{A}^{per}(Q_L)}
\newcommand{\dist}{\mathrm{dist}}
\DeclareSymbolFont{symbolsC}{U}{pxsyc}{m}{n}
\DeclareMathSymbol{\defeq}{\mathrel}{symbolsC}{"42}
\def\Xint#1{\mathchoice
{\XXint\displaystyle\textstyle{#1}}%
{\XXint\textstyle\scriptstyle{#1}}%
{\XXint\scriptstyle\scriptscriptstyle{#1}}%
{\XXint\scriptscriptstyle\scriptscriptstyle{#1}}%
\!\int}
\def\XXint#1#2#3{{\setbox0=\hbox{$#1{#2#3}{\int}$ }
\vcenter{\hbox{$#2#3$ }}\kern-.6\wd0}}
\def\dashint{\Xint-}
\DeclareSymbolFont{symbolsC}{U}{pxsyc}{m}{n}
\DeclareMathSymbol{\coloneqq}{\mathrel}{symbolsC}{"42}
\begin{document}
\title[Uniform energy distribution]{Uniform energy distribution in a pattern-forming system of surface charges}
\author{Katarina Bellova}
\address[K.~Bellova]{Institute of Mathematics, Leipzig University, Augustusplatz 10, 04109 Leipzig, Germany}
\email{bellova@math.uni-leipzig.de}
\author{Antoine Julia}
\address[A.~Julia]{Dipartimento di Matematica ``T.~Levi-Civita'', via Trieste 63, 35121 Padova, Italy.}
\email{ajulia@math.unipd.it}
\author{Felix Otto}
\address[F.~Otto]{Max Planck Institute for Mathematics in the Sciences, Inselstra{\ss}e 22, 04103 Leipzig, Germany}
\email{otto@mis.mpg.de}
\thanks{
	A.J.~has been supported
	by University of Padova STARS Project ``Sub-Riemannian Geometry and Geometric Measure Theory Issues: Old and New'';
	and by the INdAM -- GNAMPA Project 2019 ``Rectifiability in Carnot groups''.
      }
\subjclass[2010]{%
  Primary : 49Q10;  
35J50,
49S05. 
	}
\keywords{%
	pattern formation, isoperimetric problem, elliptic regularity theory, long-range interactions, uniform energy distribution
	}
\date{\today}

\begin{abstract}
We consider a variational model for a charge density $u\in\{-1,1\}$
on a (hyper)plane, with a short-range attraction coming from the interfacial energy 
and a long-range repulsion coming from the electrostatic energy. This competition
leads to pattern formation. We prove that the interfacial energy density is
(asymptotically) equidistributed at scales large compared to the scale of the pattern. We follow
the strategy laid out in \cite{ACO09}. The challenge
comes from the reduced screening capabilities of surface charges compared
to the volume charges considered in \cite{ACO09}.
\end{abstract}
\maketitle

\tableofcontents

\section{Introduction}

\medskip

The interplay of short-range attraction and long-range repulsion selects a
length scale and typically leads to pattern formation. In a thermodynamic limit,
provided the influence of boundary conditions fades away, this competition often seems
to favor a periodic pattern, like equidistant stripes. Within variational models,
periodicity has been established
in 1-d situations, eg.~\cite{Muller1993}, in 2-d models that are not too
far from a packing problem, foremost \cite{Theil2006CMP} but also
\cite{BoPeTh2014CMP}, or in models
that feature a strong anisotropy, eg.~\cite{GiulMull2012CMP}. 

\medskip

In a multi-dimensional
isotropic setting, even in a variational model, a proof of periodicity seems out of reach.
However, robust strategies for proving a uniform distribution of eg.~the energy density 
at scales large compared to the intrinsic scale are available, see \cite{ACO09}.
That paper deals with the popular model where the configuration space consists
of characteristic functions $u\in\{-1,1\}$, the short-range attractive interaction
is the interfacial energy between the two phases, and the long-range repulsive interaction is electrostatic,
with the order parameter $u$ playing the role of a charge density. 
While in \cite{ACO09}, it is the volume {\it fraction} that is implicitly prescribed, 
the problem of prescribed volume is also of interest because of the potential loss of tightness, 
eg.~\cite{KnupMura2013CMP,LuOtto2014CPAM}, 
and its isoperimetric aspects, eg.~\cite{CicaSpad2013CPAM}; 
both problems are related in the regime of small volume fraction, 
eg.~\cite{ChokPele2010SIAM}.

\medskip

The main challenge in establishing
a mesoscopically uniform energy distribution (which by a virial argument also
leads to  equipartition and thus uniform distribution of the interfacial energy) lies in capturing screening 
effects: On mesoscopic scales, charges arrange themselves in such a way as to reduce
the macroscopic part of the electric field $\bb$ as much as possible. 
In this paper, we consider a very similar problem, which however is of different
dimensionality: While the field $\bb$ extends into $d$-dimensional space $\mathbb{R}^d$,
the charges $u$ are (hyper)surface charges constrained to $\mathbb{R}^{d-1}\times\{0\}$.
In other words, we replace the squared $\dot{\mathrm{H}}^{-1}$-norm of $u$ by its fractional
counterpart $\dot{\mathrm{H}}^{-\frac{1}{2}}$. This additional, dimensional, restriction of 
the charge density $|u|\leq 1$ clearly hinders screening and thus poses an additional
challenge. In fact, it turns out that the arguments here, while following the same
strategy, are more involved than in \cite{ACO09}, as we shall detail below. In fact,
some aspects are quite reminiscent of the deep work \cite{Conti2000CPAM} on self-similarity
of twin branching near an Austenite-Martensite interface. Incidentally, our setting of a {\it thin} (partial) conductor, leading to a field energy 
in form of a (squared) {\it fractional} Sobolev norm, resonates with the very active area
of fractional elliptic equations and of thin obstacle problems; let us mention 
\cite{CaffarelliSilvestre2007} as a popular reference.

\medskip

The natural idea from statistical mechanics of establishing the
negligibility of boundary effects by comparing different boundary conditions has been
employed in \cite{ACO09}, with the vanishing flux boundary condition being good for pasting,
and the free boundary suitable for cutting. Incidentally, the ensuing monotonicity
properties, which have been subsequently used for more subtle ferromagnetic pattern formation in
\cite[Lemmas 4 and 5]{OttoVieh2010CalcVar}, 
were also crucial in recent progress on quantitative stochastic homogenization
\cite{ArmsSmar2016AnnENS}. 
Electrostatic screening also plays a role in the popular (mostly two-dimensional)
models for Coulomb gases, and similar arguments have been used in this more subtle
context, e.g.~see the recent \cite{ArmsSerf2019Local}.
Let us also mention that the treatment of boundary layers with incomplete
screening of \cite{ACO09} motivated a variational approach to the regularity of optimal transportation
\cite{GoldOtto2017variational}, see in particular Proposition 3.3 and Lemmas 2.3 and 2.4 therein. 

\medskip

We now give a short summary of this paper, contrasting it with \cite{ACO09}.
We follow  \cite{ACO09} in the sense that a first main step is an energy bound on mesoscopic
scales (Theorem \ref{uniform_bound} here, Lemma 3.5 there). We also follow  \cite{ACO09} in introducing a relaxed
problem (meaning that the non-convex constraint $u\in\{-1,1\}$ is replaced by $u\in[\,-1,1\,]$),
using its dual formulation (Lemma \ref{dual} here, Lemma 3.3 (a) there), appealing to
a trace estimate (Lemma \ref{interpolation} here, Lemma 3.2 there), in order to obtain a
non-linear estimate (Lemma \ref{nonlinestimate} here, Lemma 3.3 (b) there). 
However, and this is the major complication, in this paper, even if we were to completely
neglect the constraint on $u$, the field $\bb$ still would not vanish. In fact, it leads to
what we call the ``over-relaxed problem''. Hence the simple ODE argument that leads to
Lemma 3.5 in \cite{ACO09} has to be replaced by a Campanato iteration (Lemma \ref{iter} here), adjusting 
``shifts'' of the field at every scale (Lemma \ref{itstep} using also Lemma \ref{shiftnonlin}). The similarities with 
\cite[Theorem 2.1, (2.2)]{Conti2000CPAM} ``reverse bootstrap'' are here.

\medskip

As mentioned, we follow  \cite{ACO09} in comparing free, no-flux, and periodic
boundary conditions with their easy relations (Lemma \ref{basic} here, Lemma 3.1 there).
Because of the fact that the relaxed problem (and even the over-relaxed problem)
has a non-vanishing charge density, the pasting of a mesoscopically optimal
pattern into the relaxed solution (Lemma 3.9 in  \cite{ACO09}) is more involved.
It requires an estimate of this over-relaxed charge density (Lemma \ref{pointwise_estimate} here, 
which is the main output of the regularity Section \ref{section_pointwise}), 
and a finer modulation of the mesoscopically optimal pattern 
(based on Lemma \ref{lemma_flow}, which is folklore).
Only this yields Lemmas \ref{sigma_null_sigma} and \ref{sigma_sigma_null}, which relate the mesoscopically localized energy
to the one of the corresponding over-relaxed problem. By yet another Campanato-type
iteration, both lemmas finally imply the negligibility of the energy of the over-relaxed
problem (Lemma \ref{control-v_0}), and thus the main result (Theorem \ref{final_convergence}), from which derives the equipartition between the two contributions to the energy (Theorem \ref{equipartition}).

\medskip

The next sections contain the mathematical formulation of the problem (Section \ref{basics}), the statements of the main theorems (Section \ref{state-main}), and three series of intermediate results on the way to Theorem \ref{final_convergence} (Sections \ref{section_first_bound}, \ref{section_pointwise}, and \ref{section-last}). All the proofs are postponed to Section~\ref{section_Proofs}. 
\section{The problem, notations and definitions} \label{basics}

In the ambient space $\R^d$ with canonical basis $(\be_1,\dots,\be_d)$, we consider two chemical species distributed on the hyperplane $\R^{d-1}\times\{0\}$. These species have different charge densities, renormalised here as $+1$ and $-1$, respectively. The two different species also have a chemical interaction, we model this by introducing an energy term proportional to the interface area between them. In mathematical terms, the charge density is given by a function of locally bounded variation $u$ defined on the hyperplane and taking values $\pm 1$. The interfacial energy is then proportional to the total variation of $u$ (the semi-norm defining the space BV, see Section 3.1 in \cite{AmFuPa_book}). The charges carried on the hyperplane generate an electric field $\bb$ in the whole of $\R^d$,
 satisfying $\nabla \cdot \bb = u\mathcal{H}^{d-1}\mres (\R^{d-1}\times\{0\})$ in the distributional sense. The electric energy is proportional to the square of the 
$\mathrm{L}^2$-norm of $\bb$. 

\medskip

We work in a large cube $Q_L \defeq \left(-L/2,L/2\right)^{d-1} \times (0,L)$, the bottom face of which we denote by $\infbnd = \partial Q_L\cap \{x:x_d=0 \}.$ In the same way as in \cite{ACO09}, we will study the minimizers $(u,\bb)\in \mathrm{BV}(\infbnd,\pm 1) \times \mathrm{L}^2(Q_L,\R^d)$ of the energy
\begin{equation}\label{functional}
  E(u, \bb, Q_L) \coloneqq  \int_{\infbnd} \vert \nabla u \vert+ \int_{Q_L}   \Frac{1}{2}\vert \bb \vert^2,
\end{equation}
under the following constraint, understood in the sense of distributions
\begin{equation}\label{eq:constraints}
  \begin{cases}
    \bb\cdot \be_d &= u \text{ on } \infbnd,\\
    \nabla \cdot \bb &= 0 \text{ in } Q_L,
  \end{cases}
\end{equation}
where, letting $\nu$ be the \textit{inner} normal to the boundary, the normal component $\bb\cdot \nu$ is well defined by application of the Divergence Theorem to vector fields $\zeta \bb$, where $\zeta$ is a test function defined in the closure of $Q_L$. (See Section IX.2 of \cite{DautLionBook3} for a description of spaces of divergence-free $\mathrm{L}^2$ vector fields.) Omitting parameters in \eqref{functional} means that length has been non-dimensionalized in such a way that the intrinsic scale of the pattern is unity. Energy has been non-dimensionalized so that the energy per $(d-1)$-dimensional area is of order one.

\medskip

 The problem could equivalently be formulated in $Q_L\cup (-Q_L)$, where the constraint on $\infbnd$ could be formulated as a divergence equation in the distributional sense. This would be closer to the charge/field meaning of the pair $(u,\bb)$. However, by the symmetry of this problem, we consider only the upper cube.\footnote{From a candidate in $Q_L\cup (-Q_L)$ , one can define a symmetric candidate with no more energy by superposing the field $\bb/2$ and an appropriate reflection of $\bb/2$.} As explained in the introduction, we seek a result on mesoscopically uniform energy density. To obtain this, we will study the minimizer on smaller cubes $Q_l$, and the lower parts of their boundaries $\underline{Q_l}$.
When studying a global minimizer on a smaller scale, we need to take into account the influence of the whole domain. This takes the form of flux (Neumann) boundary conditions imposed on the upper parts of the boundary of $Q_L$: $\Gamma_L\defeq \partial Q_L \backslash \underline{Q_L}$ (or $\Gamma_l$, for $Q_l$). We thus use various types of boundary conditions which we list here. Implicitly, we already considered the family of candidates with free boundary condition on $\Gamma_L$:
\begin{align*}
\mathcal{A}(Q_L) \coloneqq \left \{ (u,\bb) \,\middle \vert\, u \in \mathrm{BV}(\infbnd,\pm 1), \bb \in \mathrm{L}^2(Q_L, \mathbb{R}^d), 
\begin{cases}
\Div \bb = 0 \text{ in } Q_L,\\
\, \,\bb \cdot \mathbf{\nu}= u 
\text{ on } \underline{Q_L}.
\end{cases}
\right  \}
\end{align*}
We also consider subclasses of $\mathcal{A}(Q_L) $ corresponding to
various flux boundary conditions on $\Gamma_L$ (again interpreted in the distributional sense). For $g \in \mathrm{L}^2(\Gamma_L)$, we will consider
\begin{align*}
 \mathcal A^{g}(Q_L) :=&
  \{(u,\bb)\in \mathcal A(Q_L): 
\bb \cdot \mathbf{\nu}= g \text{ on } \Gamma_L \}.
 \end{align*}
Of course, $\mathcal{A}^{g}(Q_L)$ is only non-empty if $\vert \int_{\Gamma_L} g \vert\leq L^{d-1}$, so that both the divergence-free condition and the boundary condition $\bb\cdot \nu =\pm 1$ on $\underline{Q}_L$ can be satisfied.
 A particular subclass which we will often study is the class of \textit{zero flux} candidates
\begin{align*}
 \mathcal A^{0}(Q_L) :=&
  \{(u,\bb)\in \mathcal A(Q_L): 
\bb \cdot \mathbf{\nu}= 0 \text{ on } \Gamma_L \}.
\end{align*}

\medskip
 
Finally, it is particularly convenient to work in a horizontally periodic setting, as it is invariant under horizontal translations. The charge $u$ will then live in the torus $\mathbb{T}_L^{d-1}$ (the $(d-1)$ dimensional cube of side length $L$, identifying the opposite faces in the usual sense) and the field $\bb$ in $\mathbb{T}_L^{d-1}\times (0,L)$. The boundary condition for $\bb$ on the top face will be free in this case; and we have to define the energy slightly differently:
\[
    E^{\per} (u,\bb, Q_L) \defeq  \int_{\mathbb{T}_L^{d-1}} \vert \nabla u\vert + \int_{\mathbb{T}_L^{d-1}\times (0,L)} \dfrac{1}{2} \vert \bb\vert^2.
\]
This notation suggests that we have identified $(u,\bb)$ on $\mathbb{T}_L^{d-1}\times (0,L)$ with horizontally periodic functions in $\R^{d-1}\times (0,L)$. In particular the divergence-free condition holds on the whole domain. However, we note that the energy takes into account (half of) the interface concentrated on $\partial(\underline{Q_L})$.
 
\medskip

 We define the {\it optimal energy densities} corresponding to various classes candidates as
\begin{eqnarray}
\sigma(Q_L) &\coloneqq& \inf_{(u,\bb) \in \mathcal{A}(Q_L)} \Frac{E(u,\bb,Q_L)}{L^{d-1}} ,\nonumber \\
\sigma^0(Q_L) &\coloneqq& \inf_{(u,\bb) \in \mathcal{A}^0(Q_L)} \Frac{E(u,\bb,Q_L)}{L^{d-1}},\nonumber \\
\sigma^{\per}(Q_L) &\coloneqq& \inf_{(u,\bb)\in \mathcal{A}^{\per}(Q_L)} \Frac{E^{\per}(u,\bb, Q_L)}{L^{d-1}} \label{eq:def-sigper}.
\end{eqnarray}
We note that, using the direct method of the calculus of variations, one can easily show that a minimizer of the functional \eqref{functional} exists in each of the classes
$\mathcal A(Q_L)$, $\mathcal A^{\per}(Q_L)$, $\mathcal A^{0}(Q_L)$ and $\mathcal A^{g}(Q_L)$ (for the latter, only if it is non empty).

\medskip

We use the short-hand notation $\lesssim$, $\gtrsim$ for $\leq C$ and $\geq C$ with a constant $C\in(0,+\infty)$ depending only on the dimension $d$, $\sim$ stands for $\lesssim$ and $\gtrsim$ at the same time. Furthermore, a hypothesis of the form $H \ll 1$  for some quantity $H$, means that there exists a constant $C\in(0,+\infty)$ (still depending only on $d$) such that $H\leq C^{-1}$.

\section{Statement of the main results}\label{state-main}
The main results of this paper are the following two theorems:
\begin{theorem}[Uniform distribution of energy]\label{final_convergence}
  There exists a constant $\sigma^*\in (0,+\infty)$, depending only on $d$, 
such that for $L\gg 1$,
\begin{equation}\label{eq:densityconv}
\max\left \{\left\vert \sigma(Q_L)-\sigma^*\right\vert, \left\vert \sigma^0(Q_L)-\sigma^*\right\vert , \left\vert \sigma^{per}(Q_L)-\sigma^*\right\vert \right \} \lesssim \dfrac{1}{L^{1/2}}.
\end{equation}
Furthermore, if $(u,\bb)$ is a minimizer in $\Aper$ and $L\geq l \gg 1$, then there holds
\begin{equation}\label{eq:unif_dis}
\left \vert \dfrac{E(u,\bb,Q_l)}{l^{d-1}} - \sigma^* \right \vert\lesssim  \dfrac{1}{l^{1/2}}.
\end{equation}
\end{theorem}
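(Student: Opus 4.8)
The plan is to isolate the constant $\sigma^*$ as the limit of the zero-flux densities $\sigma^0(Q_L)$, to upgrade this soft convergence to the rate $L^{-1/2}$ by means of the over-relaxed analysis of Sections \ref{section_first_bound}--\ref{section-last}, and finally to descend to the free and periodic classes and to sub-cubes. First I would record the elementary ``pasting'' inequality for the zero-flux problem: tiling the base of $Q_L$ by $\lfloor L/l\rfloor^{d-1}$ horizontal translates of a minimizer on $Q_l$, gluing them across the vertical faces on which the field vanishes, filling $\{l<x_d<L\}$ with $\bb\equiv 0$, and covering the leftover $L$-shaped strip by a unit-scale stripe pattern (which has zero flux and $O(1)$ energy density) yields
\begin{equation*}
\sigma^0(Q_L)\ \le\ \sigma^0(Q_l)+C\,\frac{l}{L},\qquad L\ge l\gg 1 .
\end{equation*}
Together with the two-sided a priori bound $\sigma^0(Q_l)\sim 1$ from Theorem \ref{uniform_bound}, this forces $\limsup_{L\to\infty}\sigma^0(Q_L)\le\inf_{l}\sigma^0(Q_l)$, so that
\[
\sigma^*\ \coloneqq\ \lim_{L\to\infty}\sigma^0(Q_L)\ =\ \inf_{l\gg 1}\sigma^0(Q_l)\ \in\ (0,+\infty)
\]
exists and depends only on $d$.

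The core of the argument is the complementary quantitative bound, an \emph{almost-monotonicity in the increasing direction}
\begin{equation*}
\sigma^0(Q_l)\ \le\ \sigma^0(Q_L)+C\,l^{-1/2},\qquad L\ge l\gg 1 ,
\end{equation*}
which combined with the pasting inequality gives $|\sigma^0(Q_L)-\sigma^0(Q_l)|\lesssim l/L+l^{-1/2}$, hence, on letting $L\to\infty$, $|\sigma^0(Q_l)-\sigma^*|\lesssim l^{-1/2}$ and therefore $|\sigma^0(Q_L)-\sigma^*|\lesssim L^{-1/2}$. This almost-monotonicity is precisely what the chain of intermediate results is designed to produce: Lemma \ref{sigma_sigma_null} and Lemma \ref{sigma_null_sigma} sandwich $\sigma^0$ at a given scale between the energy density of the corresponding over-relaxed problem and that density augmented by the cost of pasting a mesoscopically optimal pattern into the over-relaxed solution, the latter cost being controlled through the pointwise bound on the over-relaxed charge density (Lemma \ref{pointwise_estimate}, the output of Section \ref{section_pointwise}) together with the modulation construction of Lemma \ref{lemma_flow}; inserting this sandwich into the Campanato iteration behind Lemma \ref{control-v_0} shows that the over-relaxed excess energy decays like $l^{-1/2}$, the exponent $\tfrac12$ being dictated by the fractional $\dot{\mathrm H}^{-1/2}$-scaling that enters the trace estimate (Lemma \ref{interpolation}), in the spirit of the ``reverse bootstrap'' of \cite[Theorem 2.1]{Conti2000CPAM}. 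I expect this to be the main obstacle: unlike in \cite{ACO09}, the relaxed --- and even the over-relaxed --- problem carries a non-vanishing charge density, so the one-step ODE comparison of \cite[Lemma 3.5]{ACO09} has to be replaced by the scale-by-scale iteration with adjusted field shifts (Lemmas \ref{iter}, \ref{itstep}, \ref{shiftnonlin}), and the pasting into the over-relaxed solution must be carried out quantitatively.

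To pass from $\sigma^0$ to the remaining two densities I would use the easy relations of Lemma \ref{basic}: a periodic candidate restricts to a free one, and a zero-flux candidate periodizes at the cost of an $O(L^{d-2})$ interfacial correction on $\partial\underline{Q_L}$, whence $\sigma(Q_L)\le\sigma^{\per}(Q_L)\le\sigma^0(Q_L)+C/L\le\sigma^*+CL^{-1/2}$. In the reverse direction, a minimizer realizing $\sigma(Q_L)$ has net charge on $\underline{Q_L}$ of order at most $L^{d-3/2}$ (a larger net charge would carry more field energy than the whole budget $\sim L^{d-1}$, cf.\ Theorem \ref{uniform_bound}), so it can be completed to a zero-flux candidate on $Q_{L+CL^{1/2}}$ by a boundary layer of width $\sim L^{1/2}$ carrying a unit-scale compensating pattern; this gives $\sigma^*\le\sigma^0(Q_{L+CL^{1/2}})\le\sigma(Q_L)+CL^{-1/2}$, and, via $\sigma^{\per}\ge\sigma$, the analogous lower bound for $\sigma^{\per}$. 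Combining the two directions proves \eqref{eq:densityconv}.

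Finally, for a minimizer $(u,\bb)$ in $\Aper$ and $l\le L$, the restriction $(u,\bb)|_{Q_l}$ lies in $\mathcal A(Q_l)$ --- it is divergence-free and satisfies $\bb\cdot\be_d=u$ on $\underline{Q_l}$, with free data on $\Gamma_l$ --- so $E(u,\bb,Q_l)/l^{d-1}\ge\sigma(Q_l)\ge\sigma^*-Cl^{-1/2}$ by the previous steps applied at scale $l$. For the matching upper bound I would exploit that, by global minimality, $(u,\bb)|_{Q_l}$ minimizes $E(\cdot,Q_l)$ among the competitors in $\mathcal A^g(Q_l)$ sharing its flux $g=\bb\cdot\nu$ on $\Gamma_l$, up to an $O(l^{d-2})$ correction for interface created on $\partial\underline{Q_l}$, and then re-run the over-relaxed estimates of Section \ref{section-last} for this localized problem --- whose flux data is controlled through the a priori bound of Theorem \ref{uniform_bound} --- to obtain $E(u,\bb,Q_l)/l^{d-1}\le\sigma^*+Cl^{-1/2}$, which is \eqref{eq:unif_dis}. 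The delicate point here is that this upper bound must hold for \emph{every} sub-cube, not just on average, which is exactly why the pointwise/local form of the over-relaxed control cannot be dispensed with.
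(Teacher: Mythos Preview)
Your proposal assembles the right intermediate results but routes them in a way that introduces one unjustified step and one unnecessary detour.

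The central issue is your ``almost-monotonicity in the increasing direction'', $\sigma^0(Q_l)\le\sigma^0(Q_L)+Cl^{-1/2}$. Lemmas \ref{sigma_null_sigma} and \ref{sigma_sigma_null} do not compare $\sigma^0$ at two different scales; they relate the \emph{local energy} $E(u,\bb,Q_l)$ of a fixed periodic minimizer on $Q_L$ to $\sigma^0(Q_l)$ (resp.\ $\sigma^{per}(Q_l)$) at the \emph{same} scale $l$, modulo the over-relaxed energy. What they deliver, once combined, is $\sigma^0(Q_l)\le\sigma^{per}(Q_l)+Cl^{-1/2}$ at a fixed scale, not your two-scale inequality; the latter does not follow, since $E(u,\bb,Q_l)/l^{d-1}$ need not be bounded by $\sigma^{per}(Q_L)$ plus a small error. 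The paper avoids this altogether: the rate in (\ref{eq:densityconv}) comes from applying Lemma \ref{sigma_null_sigma} at the single scale $l=L$ (and dropping the nonnegative over-relaxed term) to get $\sigma^{per}(Q_L)\ge\sigma^0(Q_L)-CL^{-1/2}$, combining with Lemma \ref{basic}, and then using the integer monotonicities (ii), (iii) of Lemma \ref{basic} to obtain the sandwich $\sigma(Q_L)\le\sigma^*\le\sigma^0(Q_L)$. Note also a circularity hazard in your outline: the proof of Lemma \ref{sigma_sigma_null} already invokes Corollary \ref{existence_of_limit}, so it cannot be used in establishing (\ref{eq:densityconv}).

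Your boundary-layer construction for $\sigma^*\le\sigma(Q_L)+CL^{-1/2}$ is both unneeded and under-justified: the claimed net-charge bound $\int_{\underline{Q_L}}u=O(L^{d-3/2})$ does not follow from the energy estimate (the flux $\bb\cdot\nu$ on $\Gamma_L$ is only controlled in $L^2$, not in $L^1$), and even granting it you would have to absorb the full flux on $\Gamma_L$, not just its integral. The paper replaces this by Lemma \ref{basic}\,(\ref{basic per free}): reflecting a good quadrant of a free minimizer produces a periodic candidate at cost $C/L$, so $\sigma^{per}(Q_L)\le\sigma(Q_L)+C/L$; the bound $\sigma^*\le\sigma^0(Q_L)$ then comes for free from the integer monotonicity of $\sigma^0$.

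Finally, for (\ref{eq:unif_dis}) the paper first obtains the estimate for the special scales $l=\theta^n L$ (those to which Lemma \ref{control-v_0} applies) via Lemmas \ref{sigma_null_sigma}, \ref{sigma_sigma_null}, \ref{control-v_0} and Corollary \ref{existence_of_limit}, and then extends to arbitrary $l$ by a super-additivity argument for $E(u,\bb,C)-\sigma^*|\underline{C}|$ over boxes $C$. Your sketch does not address this last passage.
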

We have no reason to believe that the exponent $1/2$ in (\ref{eq:densityconv}) and (\ref{eq:unif_dis}) is optimal in any sense. However, it comes up naturally through Lemma \ref{pointwise_estimate}.
 It should be compared to the (better) exponent 1 in the case of \cite{ACO09}, which however
 can be improved by using the first variation (see Proposition 6.1 in that paper). We do not explore this direction in the present paper.
A scaling argument similar to one used in \cite[Theorem~1.2]{ACO09} yields:
\begin{theorem}[Equipartition of the energy]\label{equipartition}
For $(u,\bb)$ as above, if $L\geq l\gg 1$, then there hold
\[
    \left \vert \frac{1}{l^{d-1}}\int_{\underline{Q_l}} \vert \nabla u\vert -\dfrac{\sigma^*}{2}\right \vert \lesssim \dfrac{1}{l^{1/4}}
\quad \text{and} \quad
    \left \vert \frac{1}{l^{d-1}}\int_{Q_l} \dfrac{1}{2} \vert \bb\vert^2 -\dfrac{\sigma^*}{2}\right \vert \lesssim  \dfrac{1}{l^{1/4}}.
\]
\end{theorem}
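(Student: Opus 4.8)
The plan is to play the two terms of the energy \eqref{functional} against each other by exploiting their different homogeneities under dilation, exactly as in \cite[Theorem~1.2]{ACO09}. Fix $L\ge l\gg1$ and abbreviate $I(l):=\int_{\underline{Q_l}}|\nabla u|$ and $F(l):=\int_{Q_l}\tfrac12|\bb|^2$, so that $I(l)+F(l)=E(u,\bb,Q_l)$ and, by \eqref{eq:unif_dis},
\[
I(l)+F(l)=\sigma^* l^{d-1}+O(l^{d-3/2}).
\]
Hence it is enough to locate $F(l)$ within an error $O(l^{d-5/4})$; the bound on $I(l)$ then follows by subtraction.

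First I would note that the restriction $(u|_{\underline{Q_l}},\bb|_{Q_l})$ belongs to $\mathcal{A}(Q_l)$: the conditions $\Div\bb=0$ in $Q_l$ and $\bb\cdot\nu=u$ on $\underline{Q_l}$ are inherited, and there is no constraint on $\Gamma_l$. For $\mu>0$ the isotropic rescaling $\tilde u(x'):=u(x'/\mu)$, $\tilde\bb(x):=\bb(x/\mu)$ is then admissible in $\mathcal{A}(Q_{\mu l})$ — divergence-freeness and the bottom boundary condition are scale invariant, and $\tilde u$ remains a $\pm1$-valued $\mathrm{BV}$ function. Counting dimensions (the interfacial term is a $(d-2)$-dimensional measure of a set in $\R^{d-1}$, the field term a volume integral in $\R^d$ of an amplitude-unchanged field) gives
\[
\mu^{d-2}I(l)+\mu^d F(l)=E(\tilde u,\tilde\bb,Q_{\mu l})\ \ge\ \sigma(Q_{\mu l})(\mu l)^{d-1}\ \ge\ \sigma^*(\mu l)^{d-1}-C(\mu l)^{d-3/2},
\]
the last inequality being \eqref{eq:densityconv}. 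Using $I(l)=\sigma^* l^{d-1}-F(l)+O(l^{d-3/2})$ to eliminate $I(l)$, and keeping $\mu$ in a fixed neighbourhood of $1$ so that $\mu^{d-2}\sim1$, this rearranges to $\mu^{d-2}(\mu^2-1)F(l)\ge\sigma^*l^{d-1}\mu^{d-2}(\mu-1)-C\,l^{d-3/2}$. Dividing by $\mu^{d-2}(\mu-1)(\mu+1)$ — positive for $\mu>1$, negative for $\mu<1$, which flips the inequality — yields
\[
F(l)\ \ge\ \frac{\sigma^* l^{d-1}}{\mu+1}-\frac{C\,l^{d-3/2}}{\mu-1}\ \ (\mu>1),\qquad
F(l)\ \le\ \frac{\sigma^* l^{d-1}}{\mu+1}+\frac{C\,l^{d-3/2}}{1-\mu}\ \ (\mu<1).
\]
With $\mu=1\pm\delta$, $0<\delta\le\tfrac12$, and $\frac{\sigma^* l^{d-1}}{\mu+1}=\tfrac{\sigma^*}{2}l^{d-1}+O(\delta\,l^{d-1})$, both lines read $|F(l)-\tfrac{\sigma^*}{2}l^{d-1}|\lesssim\delta\,l^{d-1}+\delta^{-1}l^{d-3/2}$; choosing $\delta\sim l^{-1/4}$ to balance the two errors (legitimate since $l\gg1$) gives $|F(l)-\tfrac{\sigma^*}{2}l^{d-1}|\lesssim l^{d-5/4}$. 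Dividing by $l^{d-1}$ is the second estimate of the theorem, and $I(l)=E(u,\bb,Q_l)-F(l)$ together with \eqref{eq:unif_dis} gives the first.

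I do not anticipate a genuine obstacle here — the argument is a first-order (virial) identity made quantitative against the approximate minimality supplied by Theorem~\ref{final_convergence}. The two points deserving care are the bookkeeping of the error terms, responsible for the loss of exponent from $l^{-1/2}$ to $l^{-1/4}$, and a check of the two dilation exponents: it is precisely the fact that they differ by $2$, i.e.\ that $(d-2)I(l)+dF(l)\approx(d-1)(I(l)+F(l))$ to leading order, that forces the symmetric split $F(l)\approx I(l)$ asserted in the statement.
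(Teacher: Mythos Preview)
Your proposal is correct and follows essentially the same route as the paper's proof: a scaling (virial) argument that plays the $\mu^{d-2}$-homogeneity of the interfacial term against the $\mu^{d}$-homogeneity of the field term, using the convergence rates of Theorem~\ref{final_convergence} as input. The only cosmetic difference is that the paper packages the computation by defining $f(\lambda)=a/\lambda+\lambda b$ with $a=l^{-(d-1)}I(l)$, $b=l^{-(d-1)}F(l)$, and then bounds $|f'(1)|=|b-a|$ via Taylor's formula and a bound on $f''$, whereas you manipulate the inequality directly and optimize over $\delta=|\mu-1|$; the choice $\delta\sim l^{-1/4}$ in your argument corresponds exactly to the paper's choice $t=\pm l^{-1/4}$.
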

%
%
%
\section{Uniform energy bound}\label{section_first_bound}
As in \cite{ACO09}, the first main step in establishing Theorems \ref{final_convergence} and \ref{equipartition} is a
uniform bound on the local energy density:

\begin{theorem}[Uniform bound on the energy density] \label{uniform_bound}
Let $(u,\bb)$ be a minimizer of \eqref{functional} in the class $\Aper$ with $L\gg 1$. Then given $l$ with $L\geq l \gg 1$, there holds
 \begin{equation}\nonumber
 \frac{E(u,\bb, Q_l)}{l^{d-1}}   \lesssim 1.
\end{equation}
\end{theorem}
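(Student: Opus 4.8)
The plan is to establish the local energy bound by comparing the periodic minimizer, localized on a cube $Q_l$, against a cheap competitor, and then iterating this comparison across dyadic scales (a Campanato-type decay) to absorb the contribution of the field. First I would fix a sub-cube $Q_l \subset Q_L$ and invoke the elementary relations between the classes of boundary conditions (Lemma \ref{basic}), which let me pass between the periodic problem on $Q_L$ and the free / no-flux problems on $Q_l$. The key point is that the restriction of the periodic minimizer $(u,\bb)$ to $Q_l$ is, roughly speaking, a candidate with some flux $g = \bb\cdot\nu$ on $\Gamma_l$; the energy it carries on $Q_l$ is controlled from below by the flux-constrained infimum $\sigma^g(Q_l)\,l^{d-1}$. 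Conversely, because $(u,\bb)$ is a global minimizer, its energy on $Q_l$ cannot exceed what one would pay by cutting it out and replacing it with an independently optimal no-flux (or periodic) pattern of its own, glued in via a boundary layer — this is the pasting half of the comparison, and it gives an upper bound of the form $E(u,\bb,Q_l)\lesssim \sigma^0(Q_l)\,l^{d-1} + (\text{boundary-layer cost})$.

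Next I would control the absolute constants. The construction of an explicit periodic stripe (or lamination) pattern at the intrinsic scale $1$ shows $\sigma^{\per}(Q_l)\lesssim 1$ and hence $\sigma^0(Q_l)\lesssim 1$, $\sigma(Q_l)\lesssim 1$ for all $l\gg 1$, with the boundary-layer corrections of lower order in $l$. The subtlety — and this is where the paper departs from \cite{ACO09} — is that even the relaxed (indeed over-relaxed) problem has a non-vanishing field, so one cannot simply conclude that the localized energy is $O(l^{d-1})$ from a single ODE/monotonicity argument. Instead I expect to run a Campanato iteration over dyadic scales $l = 2^k$: writing $f(l) \defeq E(u,\bb,Q_l)/l^{d-1}$ (or a suitably shifted version of the field energy, adjusting the ``shift'' of $\bb$ at each scale as in Lemma \ref{itstep} and Lemma \ref{shiftnonlin}), one shows a recursive inequality of the type $f(l) \lesssim f(2l)^{\theta} + 1$ or an additive decay $f(l) \le \tfrac12 f(2l) + C$, whose iteration starting from the trivial global bound $f(L)\sim 1$ yields $f(l)\lesssim 1$ uniformly in $l$ with $L\ge l\gg 1$.

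The main obstacle is precisely this iteration step: extracting a quantitative gain at each halving of the scale. It requires the trace/interpolation estimate (Lemma \ref{interpolation}) to bound the $\dot{\mathrm H}^{-1/2}$-type field energy in terms of boundary data on $\Gamma_l$, combined with the dual formulation of the relaxed problem (Lemma \ref{dual}) and the resulting nonlinear estimate (Lemma \ref{nonlinestimate}), so that the reduced screening of a surface charge — the fact that $|u|\le 1$ genuinely constrains how much of the macroscopic field can be cancelled at a given scale — is converted into a contraction factor rather than a mere boundedness statement. Once the recursion is in place, the conclusion $E(u,\bb,Q_l)/l^{d-1}\lesssim 1$ follows by summing a geometric series, and the hypotheses $L\gg 1$, $l\gg 1$ are used only to guarantee that the intrinsic scale $1$ is small compared to $l$ so that the competitor patterns and boundary layers fit.
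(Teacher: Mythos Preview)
Your high-level plan --- global bound from Lemma \ref{basic}, then a Campanato-type iteration built on Lemmas \ref{itstep}, \ref{shiftnonlin}, \ref{nonlinestimate}, \ref{dual}, \ref{interpolation} --- is exactly the route the paper takes. But two concrete points in your write-up would derail the argument if carried out as stated.

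First, the iterated quantity is wrong. You set $f(l)=E(u,\bb,Q_l)/l^{d-1}$ and propose a recursion like $f(l)\le\tfrac12 f(2l)+C$. No such contraction holds for the \emph{area} average: heuristically $f(l)\sim\sigma^*$ at all scales, so it cannot shrink by a fixed factor. The paper iterates the \emph{volume} average
\[
F(\shift,l)=\frac{E(u,\bb-\shift\be_d,Q_l)}{l^{d}},
\]
for which Lemma \ref{itstep} gives $F(\tilde\shift,\theta l)\le\theta F(\shift,l)+C/l$; summed in Lemma \ref{iter} this yields $F(0,l)\lesssim F(0,L)+1/l$. The decisive observation is that the global estimate $E(u,\bb,Q_L)/L^{d-1}\lesssim 1$ translates into $F(0,L)\lesssim 1/L$, i.e.\ $F$ is \emph{small} at scale $L$, not merely bounded --- this is what lets you enter the regime $F\le\delta$ where the one-step improvement applies at all. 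Then $E(u,\bb,Q_l)/l^{d-1}=l\,F(0,l)\lesssim l(1/L+1/l)\lesssim 1$.

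Second, your first paragraph describes a cut-and-paste comparison yielding $E(u,\bb,Q_l)\lesssim\sigma^0(Q_l)l^{d-1}+(\text{boundary-layer cost})$. That construction is the content of Lemmas \ref{sigma_null_sigma}--\ref{sigma_sigma_null}, which sit logically \emph{after} Theorem \ref{uniform_bound} and in fact use it (via Lemmas \ref{pointwise_estimate} and \ref{control_u_avg}) to control the boundary layer. Invoking it here is circular: without Theorem \ref{uniform_bound} you have no a priori bound on $\bb\cdot\nu$ on $\Gamma_l$, hence no control of the pasting cost. The proof of Theorem \ref{uniform_bound} itself does not paste the full minimizer against a no-flux pattern; it only needs the softer comparison (Lemma \ref{nonconv}) of the original problem to the \emph{relaxed} one, combined with Lemma \ref{shiftnonlin}, both of which feed into Lemma \ref{itstep}.
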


(Since we formulate Theorem \ref{uniform_bound}
in terms of the periodic problem, there
is no loss of generality in considering centered cubes $Q_l$.)
Clearly, the task at hand is to pass the global energy estimate 
down to a local one, which will be done iteratively. 
The global energy estimate is a consequence of the
following easy lemma. It collects all the obvious relations,
including the natural monotonicities which follow from cutting and pasting,
and some easy estimates on the various global energy densities.

\begin{lemma}[Basic inequalities]\label{basic} There exists a constant $C\in(0,+\infty)$, depending only on $d$, such that for $L\gg 1$:
\begin{enumerate}[(i)]
\item $\sigma(Q_L)\;\;\;\leq \sigma^{\per}(Q_L)$ and $\sigma(Q_L) \leq\sigma^0(Q_L)$,\label{sigma_basic_comparison}
\item \label{basic free kl} $\sigma(Q_{L}) \;\;\;\leq \sigma(Q_{kL})$ for each positive integer $k$,
\item \label{basic 0 kl}$\sigma^0(Q_{kL}) \leq \sigma^0(Q_L)$ for each positive integer $k$,
\item $\sigma^0(Q_L) \;\;\leq C$, \label{sigma_0_basic_bound}
\item \label{basic lower bound} $\sigma(Q_L) \;\;\;\;\geq 1/C$,
\item \label{basic per free} $\sigma^{\per}(Q_{L}) \leq \sigma(Q_{L}) + C/L$.
\end{enumerate}
\end{lemma}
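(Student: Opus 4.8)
The plan is to treat the six items essentially in the order listed, since each is a short argument built from cutting (restricting a candidate to a subcube and discarding the rest) or pasting (tiling a larger cube with copies of an optimal small-cube configuration), plus two genuine estimates. For (i), given a competitor in $\mathcal{A}^{\per}(Q_L)$, unfolding it to a horizontally periodic field on $\R^{d-1}\times(0,L)$ and restricting to $Q_L$ produces an element of $\mathcal{A}(Q_L)$; the only subtlety is the interface term, but since $E^{\per}$ already counts (half of) the interface on $\partial(\infbnd)$, we have $E(u,\bb,Q_L)\le E^{\per}(u,\bb,Q_L)$, giving $\sigma(Q_L)\le\sigma^{\per}(Q_L)$. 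Likewise any $\mathcal{A}^0(Q_L)$ competitor is an $\mathcal{A}(Q_L)$ competitor with the same energy, so $\sigma(Q_L)\le\sigma^0(Q_L)$.

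For the monotonicities (ii) and (iii): to prove (ii) take a near-optimal $(u,\bb)\in\mathcal{A}(Q_{kL})$ and restrict it to one of the $k^{d-1}\cdot k$ (actually $k^{d-1}$, since $Q_{kL}=(-kL/2,kL/2)^{d-1}\times(0,kL)$ is not a cube unless we also subdivide vertically — more carefully, $Q_{kL}$ contains $k^{d-1}$ disjoint horizontal translates of cubes of the form $(\text{translate of }(-L/2,L/2)^{d-1})\times(0,L)$ only after also slicing at heights $L,2L,\dots$) translated copies of $Q_L$; on each such copy the pair restricts to an $\mathcal{A}(Q_L)$ competitor (the boundary condition $\bb\cdot\nu=u$ on the bottom face is inherited, and the top/side faces carry whatever flux, which is allowed in the free class), and averaging the energies of the $k^{d-1}\cdot k$ pieces gives $\sigma(Q_L)\le E(u,\bb,Q_{kL})/(kL)^{d-1}+\epsilon$. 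For (iii) we paste instead: take near-optimal $(u,\bb)\in\mathcal{A}^0(Q_L)$, and on $Q_{kL}$ lay down $k^{d-1}\cdot k$ translated copies; the zero-flux condition on the side faces $\Gamma_L$ guarantees the pasted field is still divergence-free across the internal interfaces and has no added interface there, while on the new outer sides the flux is again zero, so the result lies in $\mathcal{A}^0(Q_{kL})$ with the same energy density, giving $\sigma^0(Q_{kL})\le\sigma^0(Q_L)$.

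For the quantitative bounds (iv) and (v): for (iv) we exhibit one explicit zero-flux competitor on $Q_L$ whose energy density is $O(1)$ — the natural choice is a laminate, $u$ alternating sign on horizontal stripes of width $\sim 1$ (arranged so the net flux through each side face vanishes, which is possible by choosing the stripe pattern symmetric), and $\bb$ the field it generates in $Q_L$, extended by a compactly supported divergence-free correction near $\Gamma_L$ to enforce $\bb\cdot\nu=0$ there; both the interfacial and the $\dot{\mathrm{H}}^{-1/2}$-type field energy of a unit-scale laminate are $O(1)$ per unit area, which is exactly the content of the non-dimensionalization. For (v), the lower bound, one uses an interpolation/trace inequality: from the constraints, $u=\bb\cdot\be_d$ on $\infbnd$ and $\Div\bb=0$, so $\|u\|_{\dot{\mathrm{H}}^{-1/2}(\infbnd)}^2\lesssim\int_{Q_L}|\bb|^2$; combined with a standard interpolation $\|u\|_{\mathrm{L}^1}\lesssim\|u\|_{\dot{\mathrm{H}}^{-1/2}}^{\alpha}\|\nabla u\|_{\mathrm{L}^1}^{1-\alpha}$ and $\|u\|_{\mathrm{L}^1(\infbnd)}=L^{d-1}$ (since $|u|=1$), Young's inequality forces $\int_{\infbnd}|\nabla u|+\int_{Q_L}\frac12|\bb|^2\gtrsim L^{d-1}$, i.e.\ $\sigma(Q_L)\ge 1/C$; this is the one place where the ``surface charge'' structure (fractional norm) actually enters, and it is the step I would expect to require the most care — though it is presumably an off-the-shelf consequence of Lemma \ref{interpolation}.

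Finally (vi) compares the periodic and free densities in the pasting direction: given a near-optimal free competitor $(u,\bb)\in\mathcal{A}(Q_L)$, we modify it in a boundary layer of width $O(1)$ adjacent to the side faces $\Gamma_L$ so as to make $u$ and the tangential data match periodically and the flux $\bb\cdot\nu$ agree on opposite faces; the cost of this surgery — a divergence-free corrector supported in a layer of area $\sim L^{d-2}\cdot L=L^{d-1}$... more precisely of $(d-1)$-area $\sim L^{d-2}$ on $\infbnd$ and volume $\sim L^{d-2}$, contributing $O(L^{d-2})$ to the energy, hence $O(1/L)$ after dividing by $L^{d-1}$ — is exactly the claimed $C/L$. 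The main obstacle across the whole lemma is the construction in (vi) (and the explicit laminate in (iv)): one must produce the boundary-layer corrector as an honest $\mathrm{L}^2$ divergence-free field realizing prescribed Neumann data with the stated energy budget, which is where the fractional/harmonic-extension nature of the problem makes the bookkeeping less automatic than in the volumetric case of \cite{ACO09}; I would handle it by solving a Laplace problem in the thin slab with mixed boundary data and using the $O(1)$-width of the layer to control its energy by the $\mathrm{H}^{1/2}$-norm of the (bounded) data.
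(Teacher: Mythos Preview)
Your outline is on target for (i), (ii), (iv), and (v), but there are two genuine gaps, plus a recurring geometric confusion.

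\textbf{Item (iii).} Your claim that pasting translated copies of a zero-flux minimizer produces ``no added interface'' is false. The zero-flux condition on $\Gamma_L$ concerns only $\bb\cdot\nu$; it says nothing about the trace of $u$ on $\partial(\underline{Q_L})$. When two translated copies of $u$ meet along a shared $(d-2)$-face their traces will in general disagree, contributing a jump term to the BV seminorm. The paper fixes this by pasting \emph{reflected} rather than translated copies: even reflection of $u$ across each shared face makes the traces match, so no interface is created (and the corresponding reflection of $\bb$ preserves both the divergence-free condition and the bottom constraint). Separately, your count of $k^{d-1}\cdot k$ copies is wrong: one places $k^{d-1}$ copies horizontally and extends $\bb$ by zero for $x_d\ge L$, which is admissible because $\bb\cdot\nu=0$ on the top face. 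Stacking copies vertically would break the divergence-free condition at the internal horizontal interfaces, where the top of one copy has zero flux but the bottom of the next has flux $u$.

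\textbf{Item (vi).} Your boundary-layer surgery is the wrong route, and you do not actually carry it out. To periodize the flux $\bb\cdot\nu$ across opposite lateral faces you would need to control $\bb\cdot\nu$ on $\Gamma_L$ in $L^2$, but from $\int_{Q_L}|\bb|^2\lesssim L^{d-1}$ alone the normal trace is only in $H^{-1/2}(\Gamma_L)$, with no useful $L^2$ bound --- so there is no reason the corrector you describe has energy $O(L^{d-2})$. The paper sidesteps this entirely by a reflection trick: pick the horizontal quadrant of $Q_L$ (one of $2^{d-1}$) on which the energy density does not exceed the average, then extend it to all of $Q_L$ by iterated reflection (even in the normal component of $\bb$, odd in $u$ and in the other components). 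The result is automatically horizontally periodic, divergence-free with the right bottom flux, and the only cost is the possible new interface along the reflecting hyperplanes, at most $2(d-1)L^{d-2}$.

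Finally, in (ii) only the $k^{d-1}$ subcubes touching $\{x_d=0\}$ yield competitors in $\mathcal{A}(Q_L)$; the energy above height $L$ is simply discarded as nonnegative, not further subdivided.
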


As in \cite{ACO09}, the main challenge in establishing Theorem \ref{uniform_bound}
consists in controlling the long-range interaction via the field.
This relies on screening in the sense of electrostatics, 
i.~e.~the reduction of the size of the
field $\bb$ through a rearrangement of the charges $u$. More precisely,
the issue is how effective this screening is in the presence
of some flux boundary data $g$ on $\Gamma_L$. 
If the charges $u$ were not constrained at all,
we would arrive at the following ``over-relaxed'' problem:
\begin{equation}\nonumber
E^g_0 (Q_L) \coloneqq \inf \left \{
\int_{\bulk} \Frac{1}{2} \vert \bb \vert^2
\middle \vert 
\begin{array}{l}
\Div \bb = 0 \text{  in  } \bulk,\\
\,\ \bb\cdot \nu = g  \text{  on  } \Gamma_L.
\end{array}
\right \}
\end{equation}
It is standard that a divergence-free field minimizing this energy is the gradient of a potential
$\bb = - \nabla v_0$ (see Section IX.3 of \cite{DautLionBook3}). Furthermore, because of the free boundary conditions on $Q_L$, the first variation of the field energy yields that $\bb$
is $\mathrm{L}^2$-orthogonal to all divergence-free vector fields $\tilde \bb$ (not necessarily vanishing on $Q_L$). 
By integration by parts, this implies the vanishing of the boundary integral of $v_0 \tilde \bb\cdot\nu$. Since any flux boundary data $g$ of vanishing boundary integral can be extended to a divergence-free field $\tilde \bb$, this implies that the trace of $v_0$ is orthogonal to all functions $g$ of vanishing boundary integral, and thus has to be constant. Therefore $v_0$ can be chosen as the solution to
\begin{equation}\label{eq:overrelaxed}
\begin{cases} -\Delta v_0 &=0 \quad \text{  in  } \bulk,\\
 -\nabla v_0 \cdot \nu &= g \quad \text{  on  } \Gamma_L,\\
 \quad\ \  v_0 &=0 \quad \text{  on  } \infbnd.
\end{cases}
\end{equation}

\medskip

Even in this ideal situation, screening is incomplete: While
reducing the horizontal components of the field near the surface 
$\underline{Q}_L$, 
it has little effect on the vertical component. 
The main insight is that, on large scales, 
the non-relaxed problem essentially has the same decay properties
as the over-relaxed one when passing from $Q_l$ to the smaller $Q_{\theta l}$; 
this is the content of the following lemma. We consider the quantity $F(\shift,l)$, corresponding to the {\it volume-averaged} energy of a fixed pair $(u,\bb)$, at scale $l$ after a vertical shift $\shift$:
\begin{equation}\label{eq:def-F}
F(\shift,l)\defeq \Frac{E(u,\bb-\beta \be_d,Q_l)}{l^d}.
\end{equation}

\begin{lemma}[One-step improvement]\label{itstep}
Let $(\bb,u)$ be a minimizer in $\Aper$. There exist constants $\delta \in (0,+\infty)$ and $\theta\in(0,1/2\,]$, depending only on $d$, such that the following holds. If $L\geq l\gg 1$ and 
\begin{equation}\label{eq:iteration-condition}
\shift\in [\,- 1/2, 1/2\,] \quad \text{is such that} \quad F(\shift, l) \leq \delta,
\end{equation}
then there exists a new shift $\tilde{\shift}$ such that 
\begin{equation}\label{eq:itstep}
\vert\shift-\tilde{\shift}\vert\lesssim F(\shift,l)^{1/2} \quad \text{and} \quad F(\tilde{\shift},\theta l) - \theta F(\shift, l)\lesssim \dfrac{1}{l}.
\end{equation}
\end{lemma}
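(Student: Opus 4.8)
The plan is to split the shifted field on $Q_l$ into an over-relaxed (harmonic) part, which decays by elliptic regularity, and a remainder carrying the charge, which is kept small by the minimality of $(u,\bb)$ together with the nonlinear screening estimate Lemma \ref{shiftnonlin}. Write $\bbar := \bb - \shift\be_d$; it is divergence-free in $Q_l$ with $\bbar\cdot\be_d = u-\shift$ on $\underline{Q_l}$, and \eqref{eq:iteration-condition} gives $\int_{\underline{Q_l}}|\nabla u|\le l^d F(\shift,l)$ and $\tfrac12\int_{Q_l}|\bbar|^2\le l^d F(\shift,l)$. Let $v_0$ solve the over-relaxed problem \eqref{eq:overrelaxed} on $Q_l$ with flux $g := \bbar\cdot\nu$ on $\Gamma_l$, and split $\bbar = \bb_0 + \bb_1$ with $\bb_0 := -\nabla v_0$, $\bb_1 := \bbar - \bb_0$. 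Then $\bb_1$ is divergence-free in $Q_l$ with zero flux on $\Gamma_l$; since $v_0 = 0$ on $\underline{Q_l}$, integration by parts yields $\int_{Q_l}\bb_0\cdot\bb_1 = 0$, whence $\int_{Q_l}|\bb_0|^2 + \int_{Q_l}|\bb_1|^2 = \int_{Q_l}|\bbar|^2$, so both summands are $\le 2l^d F(\shift,l)$.

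Next, the over-relaxed part decays. As $v_0$ is harmonic in $Q_l$ and vanishes on the flat face $\underline{Q_l}$, its odd reflection across $\{x_d=0\}$ is harmonic on $(-l/2,l/2)^{d-1}\times(-l,l)$, inside which $Q_{\theta l}$ (recall $\theta\le 1/2$) lies at distance $\gtrsim l$ from the boundary; interior estimates for the harmonic field $\nabla v_0$ give $\|\nabla^2 v_0\|_{L^\infty(Q_{\theta l})}\lesssim l^{-1}\big(l^{-d}\int_{Q_l}|\bb_0|^2\big)^{1/2}$. The horizontal components of $\nabla v_0$ vanish on $\{x_d=0\}$, so on $Q_{\theta l}$ they are $\lesssim x_d\|\nabla^2 v_0\|_{L^\infty}$, while $\partial_d v_0$ oscillates over $Q_{\theta l}$ by at most $\mathrm{diam}(Q_{\theta l})\,\|\nabla^2 v_0\|_{L^\infty}$; hence, with $\gamma_0:=(\theta l)^{-d}\int_{Q_{\theta l}}\bb_0\cdot\be_d$,
\[
\int_{Q_{\theta l}}|\bb_0 - \gamma_0\be_d|^2\ \lesssim\ \theta^{d+2}\int_{Q_l}|\bb_0|^2\ \le\ \theta^{d+2}\int_{Q_l}|\bbar|^2 .
\]
I then set $\tilde\shift := \shift + \gamma_0$, so that $\bb - \tilde\shift\be_d = (\bb_0-\gamma_0\be_d) + \bb_1$ on $Q_{\theta l}$ and $|\shift - \tilde\shift| = |\gamma_0|\le\theta^{-d/2}\big(l^{-d}\int_{Q_l}|\bbar|^2\big)^{1/2}\lesssim F(\shift,l)^{1/2}$, which is the first assertion of \eqref{eq:itstep}.

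The heart of the matter is the bound
\[
\int_{\underline{Q_{\theta l}}}|\nabla u| + \int_{Q_{\theta l}}|\bb_1|^2\ \lesssim\ (\theta l)^{d-1}.
\]
This cannot follow from the scale-$l$ smallness alone: a competitor $u$ could a priori carry much interface inside $Q_{\theta l}$, and the field generated by a surface charge does not decay into the bulk. Here the minimality of $(u,\bb)$ enters, a ``reverse bootstrap'' in the spirit of \cite[Theorem~2.1]{Conti2000CPAM}: one compares $(u,\bb)$ with the global competitor that coincides with it outside $Q_l$, keeps the flux $\bb\cdot\nu$ on $\Gamma_l$ and $u$ on $\underline{Q_l}\setminus\underline{Q_{\theta l}}$, and on $\underline{Q_{\theta l}}$ replaces $u$ by a \emph{screening pattern}: a near-optimal periodic pattern for $\sigma^{\per}(Q_{\theta l})$ (whose interfacial energy is $\lesssim(\theta l)^{d-1}$, since $\sigma^{\per}(Q_{\theta l})\lesssim 1$ by Lemma \ref{basic}), of average charge $\approx\tilde\shift$ (adjusted via the flow of Lemma \ref{lemma_flow}), arranged so that the induced field is $\lesssim(\theta l)^{d-1}$ in $Q_{\theta l}$ while essentially reproducing $\bb_1$ outside $Q_{\theta l}$ — this is what Lemma \ref{shiftnonlin} provides, and here the smallness $F(\shift,l)\le\delta$ is used (so that $|\tilde\shift|<1$ and the screening construction is valid). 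Since $(u,\bb)$ is a minimizer, in the resulting inequality the common $\bb_0$-contribution and the contribution of $\bb_1$ over $Q_l\setminus Q_{\theta l}$ cancel, leaving the claimed bound; it is precisely the reduced screening of \emph{surface} charges, compared with the volume charges of \cite{ACO09}, that prevents a better estimate.

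Finally, combining: from $E(u,\bb-\tilde\shift\be_d,Q_{\theta l}) = \int_{\underline{Q_{\theta l}}}|\nabla u| + \tfrac12\int_{Q_{\theta l}}|\bb-\tilde\shift\be_d|^2$ and $\tfrac12|a+b|^2\le|a|^2+|b|^2$, dividing by $(\theta l)^d$ and inserting the three preceding bounds,
\[
F(\tilde\shift,\theta l)\ \le\ \frac{C}{l} + C\theta^{2}\,F(\shift,l).
\]
Choosing first $\theta\in(0,1/2\,]$ small enough (depending only on $d$) that $C\theta^2\le\theta$, and then $\delta>0$ small enough (depending on the now-fixed $\theta$, hence only on $d$) that Lemma \ref{shiftnonlin} applies under \eqref{eq:iteration-condition}, we obtain $F(\tilde\shift,\theta l)\le\theta F(\shift,l) + C/l$, which is the second assertion of \eqref{eq:itstep}. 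The main obstacle is the bound of the third paragraph: the harmonic decay and the bookkeeping are routine, whereas controlling the interfacial energy and the charge-generated field at the smaller scale demands the delicate screening construction — necessarily weaker here than in \cite{ACO09} — and is ultimately responsible for the $1/l$ (rather than a faster) error.
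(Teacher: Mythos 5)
Your overall architecture mirrors the paper's: split the shifted field into an over-relaxed (harmonic) part $\bb_0=-\nabla v_0$ and a remainder $\bb_1$, decay of the harmonic part by interior estimates after odd reflection, control of $\bb_1$ and the interface on the smaller cube via minimality and the nonlinear screening estimate, then absorb by choosing $\theta$ and $\delta$. The definition of $\tilde\shift$ as the cube-average of $\bb_0\cdot\be_d$ (rather than $\partial_d v_0(0)$ as in the paper) is an immaterial variant. However, there are two genuine gaps.

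The central one is your claimed bound
\[
\int_{\underline{Q_{\theta l}}}|\nabla u| + \int_{Q_{\theta l}}|\bb_1|^2\ \lesssim\ (\theta l)^{d-1}.
\]
This is not what Lemma \ref{shiftnonlin} delivers and it is false in general. Lemma \ref{shiftnonlin} controls $\int_{Q_\rho}|\bbar+\nabla v_0|^2$ by $\bigl(\int_{\Gamma_\rho}((\bbar)\cdot\nu)^2\bigr)^{d/(d-1)}$, which after the slicing step becomes $\lesssim l^{d}F(\shift,l)^{d/(d-1)}$; combined with Lemma \ref{nonconv} and almost-minimality, the correct bound is $\lesssim l^{d}F(\shift,l)^{d/(d-1)}+l^{d-1}$. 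The extra term $l^{d}F^{d/(d-1)}$ can dominate $l^{d-1}$ whenever $F(\shift,l)\gtrsim l^{-(d-1)/d}$, which is perfectly compatible with the hypothesis $F(\shift,l)\leq\delta$ once $l$ is large. Consequently your final combination omits the term $C\theta^{-d}F(\shift,l)^{d/(d-1)}$, and the role of $\delta$ is misplaced: in the paper $\delta$ is fixed \emph{after} $\theta$ precisely so that $C\theta^{-d}F^{d/(d-1)}\leq \tfrac12\theta F$, i.e.\ $\delta^{1/(d-1)}\lesssim\theta^{d+1}$; your argument uses $\delta$ only to make the hypothesis of Lemma \ref{shiftnonlin} applicable, not to absorb the nonlinear term, and so the chain of estimates does not close.

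A secondary gap: you apply Lemma \ref{shiftnonlin} on $Q_l$ with flux $\bbar\cdot\nu$ on $\Gamma_l$, but the $\mathrm{L}^2$-norm of this trace on $\Gamma_l$ is not controlled by the interior energy $\int_{Q_l}|\bbar|^2$. The paper avoids this by a Fubini slicing argument, choosing $\rho\in[\tfrac34 l,l]$ minimizing $\int_{\Gamma_r}|\bbar|^2$, which yields $\rho^{-(d-1)}\int_{\Gamma_\rho}|\bbar|^2\lesssim F(\shift,l)$ and only then invoking Lemma \ref{shiftnonlin} on $Q_\rho$. Without this selection, the hypothesis $\int_{\Gamma_l}g^2\ll L^{d-1}$ and the subsequent estimates cannot be verified from $F(\shift,l)\leq\delta$ alone.
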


In regularity theory such a result is known as a one-step
improvement lemma in a Campanato iteration. 
As usual in Campanato's characterization of H\"older spaces,
the (squared) volume average $F$ involves constant shifts, 
which in view of our comments after
\eqref{eq:overrelaxed}
reduces to the vertical component (and thus is parameterized by a
scalar $\shift$). As usual in this theory,
Lemma \ref{itstep} feeds into a Campanato iteration, of which we just retain
how the error term $Cl^{-1}$ in \eqref{eq:itstep} affects
the bound on small scales, which thanks to the {\it volume} average
in $F$ is still finer information than needed for the {\it area} average
in Theorem \ref{uniform_bound}.

\begin{lemma}[Campanato iteration]\label{iter}
Let $(\bb, u)$ be minimizing in $\Aper$.
There exists a constant $\delta \in(0,+\infty)$, depending only on $d$, such that the following holds:
If $F(0,L) \leq \delta$ and $L\geq l\gg 1$, then there holds
\begin{equation}\nonumber
F(0,l) \lesssim F(0,L)+\dfrac{1}{l}.
\end{equation}
\end{lemma}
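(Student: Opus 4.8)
The plan is to derive Lemma \ref{iter} from the one-step improvement Lemma \ref{itstep} by the standard Campanato iteration scheme, with the one twist that the "shift" parameter $\shift$ also evolves along the iteration and must be controlled. Concretely, I would choose a sequence of scales $l_n \defeq \theta^n L$ for $n=0,1,2,\dots$, with $\theta\in(0,1/2]$ the constant from Lemma \ref{itstep}, and build iteratively a sequence of shifts $\shift_n$ starting from $\shift_0 = 0$. The induction hypothesis at step $n$ would be two-fold: that $F(\shift_n,l_n)$ is small enough, say $F(\shift_n,l_n)\leq \delta$ with $\delta$ the constant of Lemma \ref{itstep} (possibly shrunk), so that Lemma \ref{itstep} applies at scale $l_n$; and that $\shift_n\in[-1/2,1/2]$, so that the hypothesis \eqref{eq:iteration-condition} is met. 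Given these, Lemma \ref{itstep} produces $\shift_{n+1}\defeq\tilde\shift$ with $|\shift_n - \shift_{n+1}|\lesssim F(\shift_n,l_n)^{1/2}$ and the recursion $F(\shift_{n+1},l_{n+1}) \leq \theta F(\shift_n,l_n) + C/l_n$.

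The core of the argument is then to solve this scalar recursion. Writing $a_n \defeq F(\shift_n,l_n)$ and using $l_n = \theta^n L$, the inequality reads $a_{n+1}\leq \theta a_n + C\theta^{-n}L^{-1}$. Dividing by $\theta^{n+1}$ and summing the geometric-type series in the usual way (or, equivalently, unrolling the recursion), one gets $a_n \leq \theta^n a_0 + \frac{C}{L}\sum_{k=0}^{n-1}\theta^{n-1-k}\theta^{-k} = \theta^n a_0 + \frac{C}{L}\theta^{n-1}\sum_{k=0}^{n-1}\theta^{-2k}$. Since $\theta<1$ the last sum is dominated by its largest term $\theta^{-2(n-1)}$ up to a constant $(1-\theta^2)^{-1}$, so $a_n \lesssim \theta^n a_0 + \theta^{-(n-1)}L^{-1} \lesssim \theta^n F(0,L) + (l_n)^{-1}$, using $\theta^{-n} = L/l_n$ and absorbing constants. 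This gives exactly $F(\shift_n,l_n)\lesssim F(0,L) + 1/l_n$ along the dyadic-type scales. For a general $l$ with $L\geq l\gg 1$ one picks $n$ with $l_{n+1}\leq l \leq l_n$; then $F(0,l)$ is comparable to $F(\shift_n, l)$ up to the shift (see below), and monotonicity-type manipulations relating $F$ at scale $l$ to $F$ at the nearby scale $l_n$ — at worst losing a factor $\theta^{-d}$ — finish the passage from the dyadic scales to all scales.

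The main obstacle, and the thing that needs genuine care rather than bookkeeping, is twofold. First, one must verify that the induction actually closes, i.e. that $\shift_n$ stays in $[-1/2,1/2]$ and that $a_n$ stays below $\delta$. For the shift: $|\shift_n - \shift_0| \leq \sum_{k=0}^{n-1}|\shift_{k+1}-\shift_k| \lesssim \sum_k a_k^{1/2}$, and from the recursion estimate $a_k \lesssim \theta^k\delta + (l_k)^{-1}$, so $a_k^{1/2}\lesssim \theta^{k/2}\delta^{1/2} + (l_k)^{-1/2} = \theta^{k/2}\delta^{1/2} + (\theta^{k}L)^{-1/2}$; the first series converges geometrically and the second is $L^{-1/2}\sum_k\theta^{-k/2}$, which is geometrically \emph{growing} and thus dominated by its last term $\theta^{-n/2}L^{-1/2} = (l_n)^{-1/2}$, hence $\lesssim 1$ once $l_n\gg 1$. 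So by taking $\delta$ small and requiring $l\gg 1$ (which forces all active scales $l_n$ to still be $\gg 1$), the total displacement of the shift is $\ll 1$, staying inside $[-1/2,1/2]$; simultaneously $a_n\leq\delta$ is maintained. One has to be a little careful that the iteration is only run down to scales $l_n\gtrsim 1$ — below that, Lemma \ref{itstep}'s hypothesis "$l\gg1$" fails — but since we only claim the conclusion for $l\gg 1$ this is harmless. Second, one must quantify how $F(\shift,l)$ depends on $\shift$: from the definition \eqref{eq:def-F}, $E(u,\bb-\shift\be_d,Q_l)$ differs from $E(u,\bb-\shift'\be_d,Q_l)$ by cross terms $\int_{Q_l}(\shift-\shift')\be_d\cdot(\bb-\shift\be_d)$ and $|\shift-\shift'|^2|Q_l|/2$, which by Cauchy--Schwarz and $|\shift-\shift'|\lesssim 1$ are controlled by $|\shift-\shift'|\,(l^d F(\shift,l))^{1/2}l^{d/2} + |\shift-\shift'|^2 l^d$; dividing by $l^d$ this shows $|F(\shift',l) - F(\shift,l)|\lesssim |\shift-\shift'|\,F(\shift,l)^{1/2} + |\shift-\shift'|^2$, a Lipschitz-type bound sufficient to transfer the estimate at the final shift $\shift_n$ back to $\shift=0$ at the end (or, alternatively, one runs the whole iteration noting that only $F(0,l)$ for the \emph{final} scale is wanted and the accumulated shift is $\ll 1$, so $F(0,l)\lesssim F(\shift_n,l) + (\text{small})^2 \lesssim F(\shift_n,l_n)\theta^{-d} + 1 \lesssim F(0,L) + 1/l$ after absorbing the $O(1)$ into the $1/l$ term via $l\gg1$). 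Assembling these pieces gives the claimed bound.
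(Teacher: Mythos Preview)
Your approach is the same as the paper's: iterate Lemma \ref{itstep} along scales $l_n=\theta^n L$ with evolving shifts $\shift_n$, solve the scalar recursion $a_{n+1}\le\theta a_n + C/l_n$, control the accumulated shift via $\sum_k a_k^{1/2}$, and remove the shift at the end. The recursion analysis and the verification that the induction closes ($\shift_n\in[-1/2,1/2]$, $a_n\le\delta$) are correct.

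There is one slip in the final shift-removal step. To pass from $F(\shift_n,l_n)\lesssim F(0,L)+1/l_n$ to $F(0,l)\lesssim F(0,L)+1/l$ you need $\shift_n^2\lesssim F(0,L)+1/l_n$, not merely $\shift_n^2\lesssim 1$. You actually have this available: in your shift bound, use the sharp estimate $a_k\lesssim\theta^k F(0,L)+1/l_k$ (which you already derived) rather than the crude $a_k\lesssim\theta^k\delta+1/l_k$; then $|\shift_n|\lesssim\sum_k a_k^{1/2}\lesssim F(0,L)^{1/2}+l_n^{-1/2}$, hence $\shift_n^2\lesssim F(0,L)+1/l_n$. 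Your parenthetical remark about ``absorbing the $O(1)$ into the $1/l$ term via $l\gg 1$'' is backwards---one cannot absorb a bounded term into a vanishing one---and betrays this oversight; with the corrected shift bound no such absorption is needed. This is precisely how the paper handles it: the induction hypothesis is stated as $|\shift_n|\le C\big(F(0,L)^{1/2}+(\theta^n L)^{-1/2}\big)$, and the conclusion is $F(0,l)\lesssim F(\shift_N,l)+\shift_N^2\lesssim F(0,L)+1/l$.
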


We now explain the route towards Lemma \ref{itstep}. Note that \eqref{eq:itstep} could be strengthened to
$F(\tilde\shift,\theta l)-\theta^\alpha F(\shift,l)\leq C_\alpha l^{-1}$
for any $\alpha \in(0,2)$. In fact, the simpler $F(\tilde\shift,\theta l)\leq\theta^\alpha F(\shift,l)$ would be obvious on the level of the
over-relaxed problem.
The main work consists in appealing to local optimality for $(\bb,u)$
in order to lift this to the non-convex problem, at the expense
of the error term $Cl^{-1}$. Following \cite{ACO09}, this is
done via the convex ``relaxed'' problem, which in this paper plays an intermediate role
between non-relaxed and over-relaxed problem 
(and is just needed in this section):
\begin{equation}\label{eq:relaxed}
E_{rel}^g(\bulk) \coloneqq \inf \left \{
\int_{\bulk} \Frac{1}{2} \vert \bb \vert^2\ 
\middle \vert \begin{array}{ccc}
\Div \bb &=0 &\text{in  } \bulk,\\
 \bb \cdot \nu &\in[\,-1,1\,] &\text{on  } \infbnd,\\
\bb\cdot \nu &=g &\text{on  } \Gamma_L.
\end{array}
\right \}
\end{equation}
In Lemma \ref{shiftnonlin}, we show that indeed the over-relaxed problem is close
to the relaxed problem; and in Lemma \ref{nonconv}, we establish that the
relaxed problem is close to the original one in terms of energy.
\begin{lemma}[Closeness of over-relaxed to relaxed problem]\label{shiftnonlin}
Let $\bb$ be a solution to the relaxed problem  \eqref{eq:relaxed} and $\shift \in [\,-1/2,1/2\,]$ be fixed. 
Let $-\nabla v_0$ be the solution to the over-relaxed problem \eqref{eq:overrelaxed} with flux boundary data replaced by $(\bb - \shift \be_d) \cdot \nu$ on $\Gamma_L$. 
Then, supposing $\int_{\Gamma_L} ((\bb -\shift \be_d) \cdot \nu )^2 \ll L^{d-1} $, 
there holds
\begin{align*}
\int_{Q_L} \vert \bb - \shift \, \be_d + \nabla v_0\vert ^2 \lesssim \left (\int_{\Gamma_L} (( \bb - \shift \, \be_d )\cdot\nu ) ^2\right )^{\frac{d}{d-1}}.
\end{align*}
\end{lemma}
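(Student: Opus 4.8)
The plan is to compare the two divergence-free fields directly and exploit that $-\nabla v_0$ is, by construction, the $\mathrm{L}^2$-projection of $\bb - \shift\be_d$ onto the affine space of divergence-free fields with prescribed flux $g \coloneqq (\bb - \shift\be_d)\cdot\nu$ on $\Gamma_L$ and \emph{unconstrained} flux on $\infbnd$. Write $w \coloneqq \bb - \shift\be_d + \nabla v_0$. Then $\Div w = 0$ in $Q_L$ (since $\bb$ and $\nabla v_0 $ are both divergence-free and $\shift\be_d$ is constant), $w\cdot\nu = 0$ on $\Gamma_L$, and $w\cdot\nu = (\bb - \shift\be_d)\cdot\nu = u - \shift$ on $\infbnd$ (using $\bb\cdot\nu = u$, the sign of $\nu$ notwithstanding). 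First I would record the orthogonality: testing the weak formulation of \eqref{eq:overrelaxed} shows that $\nabla v_0$ is $\mathrm{L}^2$-orthogonal to all divergence-free fields vanishing on $\Gamma_L$; since the over-relaxed minimizer is characterized by exactly this orthogonality, we get $\int_{Q_L}(\bb - \shift\be_d + \nabla v_0)\cdot \nabla v_0 = 0$, i.e. $\int_{Q_L} w\cdot \nabla v_0 = 0$, so $\int_{Q_L}|w|^2 = \int_{Q_L} w\cdot(\bb - \shift\be_d)$.

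The crucial observation is that the relaxed constraint $\bb\cdot\nu \in [-1,1]$ on $\infbnd$ means $u - \shift \in [-1 - \shift, 1 - \shift] \subseteq [-3/2, 3/2]$, so $|w\cdot\nu| \leq 3/2$ \emph{pointwise} on $\infbnd$, and in particular $w\cdot\nu \in \mathrm{L}^\infty(\infbnd)$. On the other hand, $w$ is a divergence-free field whose only nonzero boundary flux lives on $\infbnd$, with total flux $\int_{\infbnd}(u-\shift)$. Then I would estimate $\int_{Q_L}|w|^2$ by again testing with $w$ itself against $\bb - \shift\be_d$, integrating by parts, and using that $w$ has zero flux on $\Gamma_L$: this reduces everything to a boundary term on $\infbnd$ involving the potential of $w$ and its trace $u - \shift$. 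The natural route is to introduce the harmonic extension (or rather the solution of $-\Delta \psi = 0$ in $Q_L$, $-\nabla\psi\cdot\nu = 0$ on $\Gamma_L$, $\psi = $ something on $\infbnd$) and use the $\dot{\mathrm{H}}^{1/2}$–$\dot{\mathrm{H}}^{-1/2}$ duality pairing on $\infbnd$, which is precisely the functional-analytic reason the problem is governed by the fractional Sobolev norm. The $\mathrm{L}^\infty$ bound on the trace of $w\cdot\nu$ on $\infbnd$, combined with an interpolation/Sobolev embedding controlling the $\dot{\mathrm H}^{-1/2}$-type norm of $g|_{\Gamma_L} = (\bb - \shift\be_d)\cdot\nu$ by its $\mathrm{L}^2$-norm (this is where the exponent $\tfrac{d}{d-1}$ and the smallness hypothesis $\int_{\Gamma_L}((\bb-\shift\be_d)\cdot\nu)^2 \ll L^{d-1}$ enter — one trades an $\mathrm{L}^2$ norm on the $(d-1)$-dimensional $\Gamma_L$ for a weaker norm, picking up a power determined by the dimensional bookkeeping), delivers the claimed estimate $\int_{Q_L}|\bb - \shift\be_d + \nabla v_0|^2 \lesssim \big(\int_{\Gamma_L}((\bb-\shift\be_d)\cdot\nu)^2\big)^{d/(d-1)}$.

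The main obstacle I expect is the second step: controlling $\int_{Q_L}|w|^2$ in terms of data on $\Gamma_L$ alone, when $w$ carries a genuine flux $\int_{\infbnd}(u-\shift)$ through the bottom face that need not be small. The point is that the difference field $w = \bb - \shift\be_d + \nabla v_0$ should have \emph{no} net flux through $\infbnd$ to leading order — because both $\bb$ and $-\nabla v_0$ are built to carry flux, and the relaxed constraint $|w\cdot\nu|\le 3/2$ bounds the trace but not obviously its integral. Resolving this requires using the Euler–Lagrange structure of the relaxed problem more carefully: the relaxed minimizer $\bb$ satisfies $\bb\cdot\nu = \pm 1$ wherever the associated potential's trace is nonzero (a complementarity condition), and this is exactly what forces the trace of $w\cdot\nu$ to be supported where it can be paired favorably. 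I would handle this by writing the energy identity $\int_{Q_L}|w|^2 = \int_{\infbnd}\phi\,(w\cdot\nu)$ for the appropriate potential $\phi$ of $w$ normalized to vanish on $\Gamma_L$ (legitimate since $w\cdot\nu = 0$ there), then bounding $\phi$ in $\dot{\mathrm H}^{1/2}(\infbnd)$ by $\|w\|_{\mathrm L^2(Q_L)}$ (trace estimate, Lemma \ref{interpolation}), $w\cdot\nu$ in the dual norm by the $\Gamma_L$-data via the harmonic lifting of $v_0$ and the smallness hypothesis, and absorbing $\|w\|_{\mathrm L^2(Q_L)}$ into the left-hand side; the nonlinear power $\tfrac{d}{d-1}$ emerges from a scaling-sharp Gagliardo–Nirenberg-type inequality on $\Gamma_L$ that upgrades the $\mathrm L^2$ control to the needed weaker-norm control, with the smallness assumption ensuring we stay in the regime where this upgrade is available.
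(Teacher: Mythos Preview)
Your proposal contains a computational error and, more importantly, misses the reduction that makes the paper's argument short.

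The error: you claim $w\cdot\nu = u - \shift$ on $\infbnd$, but $w = \bb - \shift\be_d + \nabla v_0$ carries a contribution $\partial_d v_0$ from the over-relaxed potential, which does \emph{not} vanish on $\infbnd$ (only $v_0$ itself does). Hence $w\cdot\nu = u - \shift + \partial_d v_0$ there, and your pointwise bound $|w\cdot\nu|\le 3/2$ is unjustified --- $\partial_d v_0$ is controlled only in $\mathrm{L}^2(\infbnd)$ via Lemma~\ref{Neumann}, not in $\mathrm{L}^\infty$. This breaks the $\mathrm{L}^\infty$--$\dot{\mathrm H}^{1/2}$ pairing you sketch, and the ``main obstacle'' you flag is genuinely unresolved.

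The paper avoids all of this by a two-line reduction to the unshifted Lemma~\ref{nonlinestimate}. First it checks that $\bb - \shift\be_d$ minimizes the \emph{shifted} relaxed problem (constraint $\tilde\bb\cdot\nu \in [-1+\shift,1+\shift]$ on $\infbnd$, flux $(\bb-\shift\be_d)\cdot\nu$ on $\Gamma_L$). Since $[-1/2,1/2]\subseteq[-1+\shift,1+\shift]$ for $|\shift|\le 1/2$, the minimizer $\tilde\bb$ of the more constrained problem with $\tilde\bb\cdot\nu\in[-1/2,1/2]$ is admissible for the shifted problem, giving $\int_{Q_L}|\bb-\shift\be_d|^2\le\int_{Q_L}|\tilde\bb|^2$. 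Now Lemma~\ref{nonlinestimate} applies verbatim to $\tilde\bb$ (replacing $[-1,1]$ by $[-1/2,1/2]$ only affects constants), yielding $\int_{Q_L}|\tilde\bb+\nabla v_0|^2\lesssim(\int_{\Gamma_L}g^2)^{d/(d-1)}$. Two applications of the orthogonality Lemma~\ref{orthogonality_lemma} finish the job:
\[
\int_{Q_L}|\bb-\shift\be_d+\nabla v_0|^2 = \int_{Q_L}|\bb-\shift\be_d|^2-\int_{Q_L}|\nabla v_0|^2 \le \int_{Q_L}|\tilde\bb|^2-\int_{Q_L}|\nabla v_0|^2 = \int_{Q_L}|\tilde\bb+\nabla v_0|^2.
\]

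Your route --- working directly with the primal difference field and invoking complementarity --- amounts to reproving Lemma~\ref{nonlinestimate} from scratch in the shifted setting, where the asymmetric constraint $[-1-\shift,1-\shift]$ spoils the clean dual formulation of Lemma~\ref{dual} (the $\int_{\infbnd}|v|$ term becomes an asymmetric penalty). The auxiliary $[-1/2,1/2]$ problem is precisely the device that restores symmetry and lets you invoke the already-established nonlinear estimate as a black box.
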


\begin{lemma}[Closeness of relaxed to original problem]\label{nonconv}
Given a minimizer $\overbar{\bb}$ for the relaxed problem \eqref{eq:relaxed},
 there exist $(\bb,u)\in \mathcal A^g(Q_L)$ such that
\begin{equation}\nonumber
      \int_{\underline{Q_L}} \vert \nabla u \vert  + \int_{Q_L} \Frac{1}{2} \vert \bb -\overbar{\bb} \vert ^2 \lesssim L^{d-1}.
  \end{equation}
\end{lemma}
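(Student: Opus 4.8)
\textbf{Proof plan for Lemma \ref{nonconv}.}
The task is to approximate a relaxed minimizer $\overbar{\bb}$, whose normal trace on $\underline{Q_L}$ merely lies in $[-1,1]$, by an admissible pair $(\bb,u)\in\mathcal A^g(Q_L)$ where $u\in\{-1,1\}$, at the cost of only $O(L^{d-1})$ in interfacial plus field energy. The plan is to work on the bottom face: let $\overbar{u}\defeq\overbar{\bb}\cdot\nu\in[-1,1]$ on $\underline{Q_L}$, and replace $\overbar{u}$ by a genuine $\pm1$-valued function $u$ that is \emph{close in} $\dot{\mathrm{H}}^{-1/2}$, i.e.\ such that the harmonic (or divergence-free, free-on-$\Gamma_L$) extension of $u-\overbar{u}$ into $Q_L$ has $\mathrm{L}^2$-norm of order $L^{(d-1)/2}$. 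The field is then $\bb\defeq\overbar{\bb}+(\text{that corrective extension})$, which is divergence-free in $Q_L$, unchanged on $\Gamma_L$, and has the right normal trace $u$ on $\underline{Q_L}$; its energy differs from that of $\overbar{\bb}$ by the $\mathrm{L}^2$-norm squared of the correction, i.e.\ by $O(L^{d-1})$, and simultaneously $\int_{\underline{Q_L}}|\nabla u|\lesssim L^{d-1}$.

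The concrete construction I would use is to partition $\underline{Q_L}$ into unit cubes and, on each, pick $u\in\{-1,1\}$ to be a fixed pattern (e.g.\ the optimal unit-scale stripe/checkerboard competitor, or simply a suitably chosen constant) \emph{whose average agrees as closely as possible with the average of $\overbar{u}$ on that cube}; since $\overbar{u}$ takes values in $[-1,1]$ its cube-average lies in $[-1,1]$, so a convex combination argument (splitting each cube into a $+1$ part and a $-1$ part) produces $u$ with the exactly matching average and with at most one extra interface per cube, hence $\int_{\underline{Q_L}}|\nabla u|\lesssim L^{d-1}$. The function $w\defeq u-\overbar{u}$ then has mean zero on every unit cube and is bounded by $2$; standard estimates (Poincaré on each cube plus a bound on the harmonic extension, or a direct Bogovskii-type construction) give a vector field $\vec w$ on $Q_L$ with $\nabla\cdot\vec w=0$ in $Q_L$, $\vec w\cdot\nu=w$ on $\underline{Q_L}$, $\vec w\cdot\nu=0$ on $\Gamma_L$, and $\int_{Q_L}|\vec w|^2\lesssim L^{d-1}$. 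Setting $\bb\defeq\overbar{\bb}+\vec w$ and expanding $|\bb-\overbar{\bb}|^2=|\vec w|^2$ finishes the bound.

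The main obstacle is the construction of the corrective field $\vec w$ with $\mathrm{L}^2$-energy controlled by the \emph{area} $L^{d-1}$ rather than by the volume. The point is that $w$ is a high-frequency, mean-zero object localized on the hyperplane: its $\dot{\mathrm{H}}^{-1/2}(\mathbb{R}^{d-1})$-seminorm is small because cancellation on unit cubes costs only $\|w\|_{\dot H^{-1/2}}^2\lesssim \sum_{\text{cubes}}\|w\|_{L^2(\text{cube})}^2\lesssim L^{d-1}$, and this seminorm is exactly (up to a dimensional constant) the minimal field energy of a divergence-free extension into the half-space. Realizing this in the bounded domain $Q_L$ with the homogeneous flux condition on $\Gamma_L$ requires either a cut-off of the half-space extension (losing nothing worse than $O(L^{d-1})$ since the extension decays) or a localized building-block construction: on each unit cube one has a fixed field of unit energy realizing the local correction, and these are glued with a divergence-free interpolation through a thin slab just above $\underline{Q_L}$. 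Keeping the gluing divergence-free while not spoiling the free condition on $\Gamma_L$ is the only delicate bookkeeping; everything else is a direct energy expansion.
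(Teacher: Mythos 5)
Your proposal is essentially the paper's proof: partition $\underline{Q_L}$ into unit-scale cubes, choose $u\in\{\pm1\}$ on each to match the average of $\overbar{\bb}\cdot\nu$ (costing one interface per cube, hence $\lesssim L^{d-1}$ total), and build a corrective field of $\mathrm{L}^2$-energy $\lesssim 1$ per cube. The ``delicate gluing'' you worry about at the end is not an issue in the paper's construction: the local correction in each $Q_i$ is $-\nabla v_i$ where $v_i$ solves a Neumann problem with flux $u-\overbar{\bb}\cdot\nu$ on $\underline{Q_i}$ and \emph{zero} flux on the remaining faces of $\partial Q_i$, so after extension by zero these corrections patch together into a globally divergence-free field with no extra interpolation needed.
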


While the proof of Lemma \ref{nonconv} is a straightforward post-processing,
which relies on elementary elliptic estimates in form of Lemma \ref{harm_building_block} below,
Lemma \ref{shiftnonlin} is more subtle. In fact, we first establish a version of
Lemma \ref{shiftnonlin} without the shift.

\begin{lemma}[Nonlinear estimate]\label{nonlinestimate}
Given $g$ with $\int_{\Gamma_L} g^2 \ll L^{d-1}$, if $\bb $ is a minimizer for the relaxed problem \eqref{eq:relaxed}, and $-\nabla v_0$ is the minimizer of the over-relaxed problem \eqref{eq:overrelaxed}, then there holds
\begin{align*}
\int_{Q_L} \Frac{1}{2} \vert \bb + \nabla v_0 \vert^2 
\lesssim \left (\int_{\Gamma_L} g^2 \right )^{\frac{d}{d-1}}.
\end{align*}
\end{lemma}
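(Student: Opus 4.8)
\textbf{Proof proposal for Lemma \ref{nonlinestimate}.}

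The plan is to exploit the dual formulation of the convex relaxed problem \eqref{eq:relaxed} (Lemma \ref{dual}) together with the trace/interpolation estimate (Lemma \ref{interpolation}) to turn the $\mathrm{L}^2$-gap between $\bb$ and $-\nabla v_0$ into a nonlinear functional of the boundary data $g$. First I would observe that $\bb + \nabla v_0$ is divergence-free in $Q_L$ and has vanishing normal component on $\Gamma_L$ (since both $\bb$ and $-\nabla v_0$ carry the flux $g$ there), so its energy is governed entirely by its behavior on $\infbnd$. Writing $w$ for the potential with $\bb + \nabla v_0 = -\nabla w$ (legitimate after subtracting the harmonic extension, using that the relaxed minimizer $\bb$ is itself a gradient by the first-variation argument sketched after \eqref{eq:overrelaxed}), one gets
\begin{equation}\nonumber
\int_{Q_L} |\bb + \nabla v_0|^2 = -\int_{\infbnd} w\,(\bb + \nabla v_0)\cdot\nu = -\int_{\infbnd} w\,(u - (-\nabla v_0\cdot\nu)),
\end{equation}
where $u = \bb\cdot\nu \in [-1,1]$ on $\infbnd$ is the relaxed charge. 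The right-hand side is thus controlled by $\|w\|$ on $\infbnd$ in a suitable (fractional, $\dot H^{1/2}$-type) norm times the mass of the charge discrepancy.

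Next I would bring in optimality. By the dual characterization of the relaxed minimizer (Lemma \ref{dual}), the pair $(\bb,u)$ satisfies a complementarity condition: on the set where $|u|<1$ the trace of the associated potential vanishes (or is affine/constant in the appropriate sense), and where $|u|=1$ the sign of that potential is constrained. This lets me replace the charge discrepancy $u-(-\nabla v_0\cdot\nu)$ by something controlled purely by $v_0$'s boundary behavior: on the ``saturated'' part the discrepancy is bounded by $1 - |{-\nabla v_0\cdot\nu}|$, hence by $|{-\nabla v_0\cdot\nu} - u|$ only where the over-relaxed flux already exceeds $1$ in modulus. Combining this with the trace estimate Lemma \ref{interpolation} applied to $v_0$ — which bounds $\|\nabla v_0\cdot\nu\|_{\mathrm{L}^\infty(\infbnd)}$ or an appropriate $\mathrm{L}^p$ norm by a power of $\int_{\Gamma_L} g^2$ using the explicit harmonic structure of \eqref{eq:overrelaxed} and the distance $L$ between $\infbnd$ and $\Gamma_L$ — yields that the ``bad'' region where $|\nabla v_0\cdot\nu|>1$ has small measure and small contribution, quantitatively of the order $(\int g^2)^{d/(d-1)}$ after optimizing the interpolation.

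The main obstacle, and where I expect the real work to lie, is closing the estimate self-consistently: the quantity $\int_{Q_L}|\bb+\nabla v_0|^2$ appears implicitly on both sides, since the fractional-norm control of $w$ on $\infbnd$ is itself $\int_{Q_L}|\nabla w|^2 = \int_{Q_L}|\bb+\nabla v_0|^2$. So the inequality will have the schematic form $X \lesssim X^{1/2}\cdot (\text{mass of discrepancy})^{1/2}$, and the mass of the discrepancy must in turn be bounded by $X^{1/2}$ on the unsaturated part plus the genuinely nonlinear term $(\int g^2)^{d/(d-1)}$ coming from the saturated part. Absorbing $X$ into the left side then gives the claim. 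The delicate points are: (a) justifying the integration by parts and the gradient representation in the low-regularity setting of $\mathrm{L}^2$ divergence-free fields (citing \cite{DautLionBook3} as elsewhere); (b) getting the sharp exponent $d/(d-1)$ rather than something weaker, which forces the interpolation in Lemma \ref{interpolation} to be used with the optimal choice of intermediate scale — this is exactly the place where the reduced screening of surface charges (fractional $\dot H^{-1/2}$ rather than $\dot H^{-1}$) makes the bookkeeping heavier than in \cite{ACO09}; and (c) handling the constraint $\int_{\Gamma_L} g = \int_{\infbnd} u$ so that $v_0$ is well-defined and the trace of $v_0$ on $\infbnd$ is genuinely the reference value against which $w$ is measured.
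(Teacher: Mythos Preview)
Your proposal starts in the right neighborhood but then takes a wrong turn, and misses the key mechanism that produces the exponent $d/(d-1)$.

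You correctly note that $\bb=-\nabla v$ for the dual minimizer $v$, that $w:=v-v_0$ vanishes on $\Gamma_L$ in the flux sense, and that complementarity gives $uv=-|v|$ on $\infbnd$. Since $v_0=0$ on $\infbnd$, one has $w=v$ there, so your integration by parts actually yields the clean identity
\[
\int_{Q_L}|\nabla w|^2 \;=\; -\int_{\infbnd}|w|\;+\;\int_{\infbnd} w\,\partial_d v_0,
\]
equivalently (via Lemma~\ref{orthogonality_lemma}) $E_{rel}^g-E_0^g=-\tfrac{1}{2}\int|\nabla w|^2-\int_{\infbnd}|w|+\int_{\infbnd}w\,\partial_d v_0$. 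The \emph{good-sign} term $-\int_{\infbnd}|w|$ is the whole point of the dual formulation here, and your write-up never uses it. Instead you propose to decompose $\infbnd$ into saturated and unsaturated parts and to bound the ``charge discrepancy'' via pointwise control of $\partial_d v_0$; that route does not close, and in particular will not give the sharp exponent.

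Concretely, the paper's argument proceeds from the identity above by applying H\"older to the last term (introducing $V:=\|\partial_d v_0\|_{L^2(\infbnd)}$), then invoking Lemma~\ref{interpolation} \emph{on $w$} to bound $(\int_{\infbnd}w^2)^{1/2}$ by $(\epsilon L)^{1/2}\|\nabla w\|_{L^2}+(\epsilon L)^{-(d-1)/2}\int_{\infbnd}|w|$. The $\int_{\infbnd}|w|$ contribution is absorbed by the good-sign term provided $(\epsilon L)^{(d-1)/2}\gtrsim V$, and the gradient contribution is absorbed by Young. Optimizing $\epsilon$ then leaves $E_{rel}^g-E_0^g\lesssim V^{2d/(d-1)}$. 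The final step, which you do not mention at all, is Lemma~\ref{Neumann}: it bounds $V=\|\partial_d v_0\|_{L^2(\infbnd)}$ by $\|g\|_{L^2(\Gamma_L)}$, and this is what turns the estimate into $(\int_{\Gamma_L}g^2)^{d/(d-1)}$. Your proposal instead applies Lemma~\ref{interpolation} to $v_0$ in hopes of getting $L^\infty$ or $L^p$ control of $\partial_d v_0$ on $\infbnd$; that lemma does no such thing (it is a trace inequality relating $L^2$ boundary values of a function to its bulk gradient and its $L^1$ boundary norm), so this step would fail. The self-consistent bootstrap $X\lesssim X^{1/2}(\cdots)^{1/2}$ you sketch is also not how the argument runs: there is no implicit inequality to iterate, only a direct absorption.
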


Here, as in Lemma \ref{shiftnonlin}, the crucial aspect of this non-linear estimate
is that the exponent $d/(d-1)$ appearing on the r.~h.~s. is 
(strictly) larger than one. Both Lemma \ref{nonlinestimate} and \ref{shiftnonlin} crucially rely on an 
obvious $\mathrm{L}^2$-orthogonality between the over-relaxed problem and the two
others, which we state for drama.

\begin{lemma}[Orthogonality]\label{orthogonality_lemma}
 Let $\bb$ be a divergence-free field and $v_0$ be the solution to the over-relaxed problem \eqref{eq:overrelaxed} in $Q_L$ with $g = \bb\cdot \nu$ on $\Gamma_L$.
  Then there holds
 \begin{equation}\nonumber
 \int_{Q_L} \Frac{1}{2} \vert \bb + \nabla v_0 \vert^2 = \int_{Q_L} \Frac{1}{2} \vert \bb \vert^2 -\int_{Q_L} \Frac{1}{2} \vert \nabla v_0 \vert^2.
\end{equation}
\end{lemma}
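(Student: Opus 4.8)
The plan is to exploit the variational characterization of $v_0$ as the minimizer of the over-relaxed problem with flux $g = \bb\cdot\nu$ on $\Gamma_L$. First I would note that $-\nabla v_0$ is characterized, among all $\mathrm{L}^2$ vector fields that are divergence-free in $Q_L$ and have normal flux $g$ on $\Gamma_L$ (with no constraint on $\underline{Q_L}$), as the unique element of smallest $\mathrm{L}^2$-norm. Equivalently, $-\nabla v_0$ is the $\mathrm{L}^2$-orthogonal projection of $\bb$ onto the affine subspace of such fields, since $\bb$ itself satisfies $\Div\bb=0$ in $Q_L$ and $\bb\cdot\nu=g$ on $\Gamma_L$. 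Consequently the difference $\bb+\nabla v_0$ lies in the orthogonal complement, namely the space of gradients $\nabla\varphi$ with $\varphi=0$ on $\Gamma_L$ and $\varphi$ free on $\underline{Q_L}$ — the point being that for such test fields, integration by parts against a divergence-free field with vanishing flux on $\Gamma_L$ produces only a boundary term on $\underline{Q_L}$, which need not vanish, so these are precisely the directions that $v_0$'s Euler--Lagrange equation does \emph{not} see.

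The key computation is then the Pythagorean identity: writing $\bb = -\nabla v_0 + (\bb+\nabla v_0)$, I would check that the cross term $\int_{Q_L} (-\nabla v_0)\cdot(\bb+\nabla v_0)$ vanishes. This is where I would integrate by parts: $\int_{Q_L}\nabla v_0\cdot(\bb+\nabla v_0) = \int_{Q_L} v_0\,\Div(\bb+\nabla v_0) \pm \int_{\partial Q_L} v_0\,(\bb+\nabla v_0)\cdot\nu$. The volume term is $\int v_0(\Div\bb - \Delta v_0) = 0$ using $\Div\bb=0$ in $Q_L$ and $\Delta v_0 = 0$ from \eqref{eq:overrelaxed}. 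On $\Gamma_L$ the boundary term vanishes because $(\bb+\nabla v_0)\cdot\nu = g - g = 0$ there by construction; on $\underline{Q_L}$ it vanishes because $v_0 = 0$ there by the third line of \eqref{eq:overrelaxed}. Hence the cross term is zero, and expanding $\int\frac12|\bb+\nabla v_0|^2 = \int\frac12|\bb|^2 - \int\nabla v_0\cdot\bb - \int\frac12|\nabla v_0|^2 + \int|\nabla v_0|^2$ and using the orthogonality to rewrite $\int\nabla v_0\cdot\bb = -\int|\nabla v_0|^2$... let me instead just say: from $\int(-\nabla v_0)\cdot(\bb+\nabla v_0) = 0$ one gets $\int\frac12|\bb+\nabla v_0|^2 = \int\frac12|\bb|^2 - \int\frac12|\nabla v_0|^2$ directly by writing $|\bb+\nabla v_0|^2 = |\bb|^2 - |\nabla v_0|^2 + 2(\nabla v_0)\cdot(\bb+\nabla v_0)$ (an algebraic identity) and discarding the last term.

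I do not anticipate a serious obstacle here — the statement is essentially the assertion that a least-norm minimizer over an affine space yields an orthogonal decomposition, and the only thing to be careful about is the justification of the integration by parts. The mild technical point worth spelling out is that $\bb$ is only assumed $\mathrm{L}^2$ and divergence-free in the distributional sense, so the boundary traces $\bb\cdot\nu$ on $\partial Q_L$ must be understood in the weak sense described in Section~\ref{basics} (via the Divergence Theorem applied to $\zeta\bb$); since $v_0\in H^1(Q_L)$ vanishes on $\underline{Q_L}$ and $\nabla v_0$ is smooth enough for the trace of $v_0$ to pair with $\bb\cdot\nu$, the boundary pairing on $\Gamma_L$ is legitimate, and one can alternatively avoid traces altogether by testing the distributional equation $\Div\bb=0$ directly against $v_0$ extended suitably. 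Either route closes the argument. The "for drama" remark in the excerpt signals that this is a soft lemma whose only role is to set up the Pythagorean splitting used in Lemmas~\ref{nonlinestimate} and~\ref{shiftnonlin}.
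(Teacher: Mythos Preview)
Your proposal is correct and takes essentially the same approach as the paper: both arguments reduce to a single integration by parts using $\Div\bb=0$, $\Delta v_0=0$, the matching fluxes on $\Gamma_L$, and $v_0=0$ on $\underline{Q_L}$. The paper computes $\int_{Q_L}\bb\cdot\nabla v_0=-\int_{Q_L}|\nabla v_0|^2$ directly and then expands the square, whereas you equivalently show the cross term $\int_{Q_L}\nabla v_0\cdot(\bb+\nabla v_0)$ vanishes; the variational preamble is accurate but not needed for the computation.
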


Modulo Lemma \ref{orthogonality_lemma}, we follow the approach of \cite{ACO09} to establish Lemma \ref{nonlinestimate}:
We appeal to a dual formulation of the convex relaxed problem (Lemma \ref{dual}),
which reduces Lemma \ref{nonlinestimate} to a trace estimate (Lemma \ref{interpolation}).

\begin{lemma}[Dual to the relaxed problem]\label{dual}
Given $g \in \mathrm{L}^2(\Gamma_L)$ with $\vert \int_{\Gamma_L} g\vert \leq L^{d-1}$ there holds
\begin{align}
E^g_{rel}(Q_L) =
 - \inf\left\{ \int_{Q_L} \Frac{1}{2} \vert \nabla v \vert^2  + \int_{\infbnd} \vert v \vert - \int_{\Gamma_L} vg \ \middle \vert \ v\in \mathrm{H}^1(Q_L) \right \}.\label{eq:dualtorel}
\end{align}
\end{lemma}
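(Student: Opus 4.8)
The plan is to establish the duality \eqref{eq:dualtorel} by the standard Fenchel--Rockafellar machinery, reformulating the relaxed problem \eqref{eq:relaxed} as an unconstrained convex minimization over the field. First I would rewrite
\[
E_{rel}^g(Q_L) = \inf_{\bb}\left\{ \int_{Q_L}\Frac12|\bb|^2 + \chi(\bb)\right\},
\]
where $\chi$ is the (convex, lower semicontinuous) indicator of the affine-convex set of admissible fields: $\Div\bb = 0$ in $Q_L$, $\bb\cdot\nu = g$ on $\Gamma_L$, and $\bb\cdot\nu\in[-1,1]$ on $\infbnd$. The pairing that produces the dual variable is $\langle v,\bb\rangle = \int_{Q_L}\nabla v\cdot\bb$ for $v\in\mathrm H^1(Q_L)$; integrating by parts (legitimately, since $\bb$ is a divergence-free $\mathrm L^2$ field with well-defined normal trace, as recalled after \eqref{eq:constraints} with reference to \cite{DautLionBook3}), this pairing sees only the boundary data:
\[
\int_{Q_L}\nabla v\cdot\bb = \int_{\infbnd} v\,(\bb\cdot\nu)\big|_{\infbnd} + \int_{\Gamma_L} v\,g,
\]
because $\bb\cdot\nu = g$ is pinned on $\Gamma_L$ while on $\infbnd$ the trace $\bb\cdot\nu$ ranges over $[-1,1]$-valued functions (here $\nu$ is the inner normal, so signs must be tracked carefully).

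Next I would compute the Legendre--Fenchel conjugate. The quadratic term $\int_{Q_L}\frac12|\bb|^2$ is self-dual. The constraint $\bb\cdot\nu\in[-1,1]$ on $\infbnd$ contributes, upon taking the supremum over admissible traces of $\int_{\infbnd} v\,(\bb\cdot\nu)$, exactly $\int_{\infbnd}|v|$ — this is the support function of the interval $[-1,1]$ applied pointwise to the trace of $v$. The pinned datum $g$ on $\Gamma_L$ contributes the linear term $-\int_{\Gamma_L} vg$ (with the sign fixed by the integration-by-parts formula above and the orientation of $\nu$). The divergence-free constraint is what forces the test object to be a gradient $\nabla v$ rather than a general vector field. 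Assembling these, the dual problem reads
\[
-\inf_{v\in\mathrm H^1(Q_L)}\left\{\int_{Q_L}\Frac12|\nabla v|^2 + \int_{\infbnd}|v| - \int_{\Gamma_L} vg\right\},
\]
which is \eqref{eq:dualtorel}.

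To turn weak duality (which is immediate from the above computation plus Young's inequality $\int\nabla v\cdot\bb \le \int\frac12|\bb|^2 + \int\frac12|\nabla v|^2$) into equality, I would invoke a Fenchel--Rockafellar-type qualification: it suffices that the primal admissible set be nonempty with an interior point in an appropriate topology, which holds precisely under the hypothesis $|\int_{\Gamma_L} g|\le L^{d-1}$ (this is exactly the compatibility condition noted in the text ensuring $\mathcal A^g(Q_L)\neq\emptyset$, and with strict inequality or a small perturbation one gets the needed slack since $[-1,1]$ has nonempty interior). Alternatively — and perhaps more robustly at the boundary case $|\int_{\Gamma_L}g| = L^{d-1}$ — I would argue directly: the dual functional is coercive on $\mathrm H^1(Q_L)$ (the term $\int_{\infbnd}|v|$ controls the trace on $\infbnd$, which together with $\int_{Q_L}|\nabla v|^2$ controls the full $\mathrm H^1$ norm via Poincaré, while $\int_{\Gamma_L}vg$ is subcritical by trace inequalities) and strictly convex, hence admits a unique minimizer $v$; its Euler--Lagrange equation is a variational inequality that is solved by $\Delta v = 0$ in $Q_L$, $-\partial_\nu v = g$ on $\Gamma_L$, and $-\partial_\nu v \in \mathrm{sign}(v)$ on $\infbnd$ (a thin-obstacle-type condition), and setting $\bb = -\nabla v$ one checks this pair is primal-admissible and achieves equality in Young's inequality, so no duality gap remains. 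The main obstacle is the careful handling of the boundary traces — making the integration-by-parts identity rigorous for $\mathrm L^2$ divergence-free fields with merely $\mathrm L^2$ normal trace on $\Gamma_L$ and $[-1,1]$-valued (hence $\mathrm L^\infty$) normal trace on $\infbnd$, and getting every sign right with $\nu$ the inner normal — together with verifying the constraint qualification in the borderline flux case; the convex-analytic skeleton itself is entirely routine.
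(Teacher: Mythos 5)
Your proposal follows the same underlying convex-duality skeleton as the paper, but the tool you reach for to close the duality gap is different, and it's worth comparing. The paper replaces the constraints by a supremum over $v\in\mathrm H^1(Q_L)$ of a Lagrangian $K((u,\bb),v)$ and then invokes Br\'ezis's abstract min-max theorem (Proposition 1.1 of \cite{BrezisOpMaxMon}), carefully verifying weak lower semicontinuity and weak compactness of a sublevel set of $K(\cdot,\widehat v)$ to justify the interchange of $\inf$ and $\sup$. Your first alternative (Fenchel--Rockafellar with Slater-type qualification) is morally equivalent, but the qualification is genuinely awkward here: the set of $[-1,1]$-valued traces has empty interior in $\mathrm L^2(\underline{Q_L})$, so the ``nonempty interior of $[-1,1]$'' intuition does not directly deliver a constraint qualification, and in the borderline case $|\int_{\Gamma_L}g|=L^{d-1}$ the primal set is a single field, which is exactly where Slater-type arguments break. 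Your second alternative --- minimize the dual functional directly, derive the thin-obstacle Euler--Lagrange system $\Delta v=0$, $-\partial_\nu v=g$ on $\Gamma_L$, $-\partial_\nu v\in\mathrm{sgn}(v)$ on $\underline{Q_L}$, and set $\bb=-\nabla v$ --- is a genuinely different and in some ways cleaner route: you never interchange inf and sup, and strong duality follows from the pointwise identity $v\,\partial_\nu v=-|v|$ on $\underline{Q_L}$ fed through one integration by parts. What it costs you is a coercivity discussion: the dual functional is coercive on $\mathrm H^1(Q_L)$ \emph{because} $|\int_{\Gamma_L}g|\leq L^{d-1}$ (test on constants to see this is sharp), and in the borderline case coercivity degenerates along the constant direction, so existence of a dual minimizer needs a separate word (e.g.\ restricting to zero-average $v$ after quotienting out the flat direction). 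Finally, note that with $\nu$ the \emph{inner} normal, the integration-by-parts identity reads
\begin{equation*}
\int_{Q_L}\nabla v\cdot\bb=-\int_{\underline{Q_L}}v\,(\bb\cdot\nu)-\int_{\Gamma_L}v\,g
\end{equation*}
for divergence-free $\bb$; you wrote it with the opposite sign. The dual problem you state is nevertheless the correct one, so you must have implicitly compensated, but in a write-up this is exactly the kind of slip that propagates, and it is the reason the support function of $[-1,1]$ lands on $\int_{\underline{Q_L}}|v|$ with the $-\int_{\Gamma_L}vg$ sign rather than the reverse.
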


\begin{lemma}[Trace estimate]\label{interpolation}
Given a function $w\in \mathrm{H}^1(Q_L)$, then for any $\eps \in (0,1\,]$ there holds
\begin{align*}
\left (\int_{\underline{Q_L}} w^2 \right )^{\frac{1}{2}} \lesssim (\eps L)^{\frac{1}{2}} \left (\int_{Q_L} \vert \nabla w \vert^2 \right )^{\frac{1}{2}} + \dfrac{1}{(\eps L)^{\frac{d-1}{2}}}\int_{\underline{Q_L}} \vert w \vert.
\end{align*}
\end{lemma}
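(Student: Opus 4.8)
The plan is to reduce the inequality, by a covering-and-scaling argument, to the corresponding estimate on a single unit cube, which itself follows from the trace theorem together with the Poincar\'e inequality.

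\emph{Step 1: the single-cube estimate.} Let $C=(0,1)^{d-1}\times(0,1)$ with bottom face $\Sigma=(0,1)^{d-1}\times\{0\}$, and let $w\in \mathrm H^1(C)$. Denoting by $\bar w$ the average of $w$ over $C$, the trace theorem on the Lipschitz domain $C$ and the Poincar\'e inequality give
\[
\Bigl(\int_\Sigma (w-\bar w)^2\Bigr)^{1/2}\lesssim \|w-\bar w\|_{\mathrm H^1(C)}\lesssim \Bigl(\int_C|\nabla w|^2\Bigr)^{1/2}.
\]
Since $|\Sigma|=1$, the average of $w$ over $\Sigma$ is bounded by $\int_\Sigma|w|$, and it differs from $\bar w$ by at most the average of $|w-\bar w|$ over $\Sigma$, hence by at most $(\int_\Sigma(w-\bar w)^2)^{1/2}$. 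Combining with the triangle inequality $\|w\|_{\mathrm L^2(\Sigma)}\le \|w-\bar w\|_{\mathrm L^2(\Sigma)}+|\bar w|$ yields
\[
\Bigl(\int_\Sigma w^2\Bigr)^{1/2}\lesssim \Bigl(\int_C|\nabla w|^2\Bigr)^{1/2}+\int_\Sigma|w|.
\]

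\emph{Step 2: scaling.} Set $\ell\defeq\eps L\le L$. The change of variables $x\mapsto \ell x$ turns the estimate of Step~1 into the statement that, for any cube $C_\ell$ of side $\ell$ with bottom face $\Sigma_\ell$,
\[
\int_{\Sigma_\ell} w^2\lesssim \ell\int_{C_\ell}|\nabla w|^2+\frac{1}{\ell^{d-1}}\Bigl(\int_{\Sigma_\ell}|w|\Bigr)^2;
\]
the power $d-1$ is precisely the one forced by the scaling of the $\mathrm L^1$-norm on the $(d-1)$-dimensional face $\Sigma_\ell$. (The same holds, with the implicit constant unchanged up to a dimensional factor, if the side length lies in $[\ell/2,\ell]$.)

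\emph{Step 3: covering and summation.} Let $k\defeq\lceil 1/\eps\rceil$, so that the cubes of side $\ell'\defeq L/k\in(\ell/2,\ell]$ aligned on a grid tile $Q_L$ exactly into $k^d$ congruent pieces. Those pieces $B_i$ meeting $\underline{Q_L}$ are contained in $Q_L$ (as $\ell'\le L$) and their bottom faces $\Sigma_i\subset\underline{Q_L}$ tile $\underline{Q_L}$. Applying the rescaled estimate of Step~2 on each $B_i$ and summing over the $i$ with $\Sigma_i\subset\underline{Q_L}$, using $\sum_i\int_{B_i}|\nabla w|^2\le\int_{Q_L}|\nabla w|^2$ and $\sum_i(\int_{\Sigma_i}|w|)^2\le(\sum_i\int_{\Sigma_i}|w|)^2=(\int_{\underline{Q_L}}|w|)^2$, we obtain
\[
\int_{\underline{Q_L}} w^2\lesssim \eps L\int_{Q_L}|\nabla w|^2+\frac{1}{(\eps L)^{d-1}}\Bigl(\int_{\underline{Q_L}}|w|\Bigr)^2 .
\]
Taking square roots and using $\sqrt{a+b}\le\sqrt a+\sqrt b$ gives the claimed inequality.

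There is no real obstacle here: this is a routine trace-interpolation estimate, and the only points requiring attention are bookkeeping of the $\eps$-dependence through the scaling (which is what produces the factors $(\eps L)^{1/2}$ and $(\eps L)^{-(d-1)/2}$) and the elementary observation, using $\eps\le 1$, that the thin near-boundary layer of height $\sim\eps L$ fits inside $Q_L$, so that all of $\underline{Q_L}$ is captured by faces of subcubes contained in $Q_L$. Alternatively, Step~3 can be replaced by a direct one-dimensional fundamental-theorem-of-calculus argument in the vertical variable over the slab $\underline{Q_L}\times(0,\eps L)$ followed by a Poincar\'e estimate on that slab; the covering route above is slightly cleaner.
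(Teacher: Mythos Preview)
Your proof is correct and follows essentially the same strategy as the paper: establish the single-cube estimate, then cover $\underline{Q_L}$ by faces of subcubes of side $\sim\eps L$, apply the rescaled estimate on each, and sum using $\sum a_i^2\le(\sum a_i)^2$. The only minor difference is in Step~1: you obtain the unit-cube inequality directly from the trace theorem plus Poincar\'e, whereas the paper combines a one-dimensional fundamental-theorem-of-calculus argument with a cited interpolation lemma from \cite{ACO09}; your route is arguably more self-contained.
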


We conclude this section by stating two elliptic estimates that we need. We could not find a reference for the maximal regularity expressed in Lemma \ref{Neumann} and thus provide a proof. Lemma \ref{harm_building_block} is standard and we include its elementary proof for convenience.

\begin{lemma}[Normal flux estimate]\label{Neumann}
For any harmonic function $v$ on the cube $[0,L]^d$ which vanishes on its bottom
face $x_d=0$ we have
for the normal derivative $\partial_\nu v$ on the boundary $\partial(0,L)^d$:
\begin{equation}\nonumber
\int_{\{x_d=0\}}(\partial_\nu v)^2\lesssim \int_{\Gamma}(\partial_\nu v)^2,
\end{equation}
where $\Gamma:=(\partial(0,L)^d)\setminus \{x_d=0\}$.
\end{lemma}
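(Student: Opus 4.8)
The plan is to reduce to the unit cube $L=1$ by scaling (both sides of the claimed inequality scale the same way under $x\mapsto Lx$) and then to use the symmetry coming from the Dirichlet condition on the bottom face. First I would extend $v$ by odd reflection in $x_d$ to a function $\tilde v$, harmonic on the enlarged box $C:=(0,1)^{d-1}\times(-1,1)$ and odd in $x_d$. The bottom face $\{x_d=0\}$ then becomes an interior hyperplane $S:=(0,1)^{d-1}\times\{0\}$ of $C$ on which $\tilde v\equiv 0$, so that $\partial_\nu v=\partial_d\tilde v$ there, while the Neumann data of $\tilde v$ on $\partial C$ consists of the original datum on $\Gamma$ (reflected), whence $\int_{\partial C}(\partial_\nu\tilde v)^2=2\int_\Gamma(\partial_\nu v)^2$. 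The claim becomes $\int_S(\partial_d\tilde v)^2\lesssim\int_{\partial C}(\partial_\nu\tilde v)^2$.

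Next I would split $S$ into a part $S_{\mathrm{int}}$ lying at a definite distance from the lateral faces of $C$ and, along each lateral face, a thin collar. On $S_{\mathrm{int}}$ interior elliptic estimates bound $(\partial_d\tilde v)^2$ pointwise by a solid $\mathrm{L}^2$-average of $|\nabla\tilde v|^2$, hence $\int_{S_{\mathrm{int}}}(\partial_d\tilde v)^2\lesssim\int_C|\nabla\tilde v|^2$; and this Dirichlet energy is controlled by the flux through the identity $\int_C|\nabla\tilde v|^2=\int_{\partial C}\tilde v\,\partial_\nu\tilde v$ combined with a trace estimate and the Poincar\'e inequality (valid since $\int_C\tilde v=0$ by oddness), which together give $\int_C|\nabla\tilde v|^2\lesssim\int_{\partial C}(\partial_\nu\tilde v)^2$.

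The heart of the matter is the collars, where $S$ abuts $\partial C$. Localizing there with a cutoff that is even in $x_d$ (so that oddness, and hence the vanishing on $S$, is preserved; the commutator terms are of lower order and absorbed into $\int_C|\nabla\tilde v|^2$) and flattening, one is reduced to the half-space model: $w$ harmonic and decaying on $\{x_1>0\}$, with $\mathrm{L}^2$ Neumann datum $g$ on $\{x_1=0\}$ and $w=0$ on $\{x_d=0\}$, for which one must prove $\int_{\{x_d=0\}}(\partial_d w)^2\lesssim\int_{\{x_1=0\}}g^2$; near a $(d-2)$-dimensional edge between two lateral faces the same scheme applies with the half-space replaced by a quadrant carrying Neumann data on both its faces. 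A naive trace theorem is useless here, because $\nabla w$ only lies in $\dot{\mathrm H}^{1/2}(\{x_1>0\})$, which is exactly borderline for taking a trace on the transversal hyperplane $\{x_d=0\}$. Instead I would compute the trace explicitly: Fourier transforming in $x_2,\dots,x_{d-1}$ (frequencies $\zeta$) and using the Poisson representation in $(x_1,x_d)$, squaring $\partial_d w|_{\{x_d=0\}}$ and integrating in $x_1$ produces a bilinear form in the $x_d$-frequencies whose kernel is dominated by $(|\xi_d|+|\eta_d|)^{-1}$; Hilbert's inequality then bounds it by $\int_{\{x_1=0\}}g^2$, uniformly in $\zeta$, and integrating in $\zeta$ finishes the collar.

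I expect the main obstacle to be precisely this last estimate: the whole statement is scale-critical, so there is no room to be wasteful at any step, and at the edge the only gain available is the cancellation encoded in Hilbert's inequality rather than any Sobolev embedding. A secondary technical nuisance is the bookkeeping near the $(d-2)$-dimensional edges of the cube, where the local model is a wedge and one has to track carefully which faces carry Neumann data; but this is a routine, if tedious, adaptation of the half-space computation.
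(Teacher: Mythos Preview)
Your proposal is correct and identifies the same core obstruction as the paper: the estimate is scale-critical, and the only mechanism that closes it is a Hilbert-type inequality. However, the route you take is genuinely different from the paper's.

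The paper does not localize geometrically. Instead, after scaling to $(0,\pi)^d$, it decomposes $v$ into $2^d-1$ pieces, each of which has nonzero flux on exactly one face of $\Gamma$ and zero flux on the others, and treats each piece by an explicit Fourier-series expansion adapted to those boundary conditions. The ``easy'' case (input face opposite the bottom) is immediate from Parseval. The ``hard'' case (input face adjacent to the bottom) leads, after Parseval in the tangential variables and integrating out the remaining $\cosh$ factor, to the \emph{discrete} bilinear estimate
\[
\sum_{n,m\ge 0}\frac{b_n b_m}{n+m+1}\lesssim\sum_{n\ge 0} b_n^2,
\]
which is then proved via a discrete Hardy inequality. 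This is the discrete analogue of the continuous Hilbert inequality you invoke in your half-space computation.

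What you do differently: rather than a spectral decomposition by input face, you split the target hyperplane $S=\{x_d=0\}$ into an interior part (handled by subharmonicity of $|\nabla\tilde v|^2$ and a global energy bound) and collars near the lateral faces, where you flatten to a half-space and use the continuous Fourier transform. Your kernel bound $|K(\xi_d,\eta_d)|\le(|\xi_d|+|\eta_d|)^{-1}$ and the continuous Hilbert inequality play exactly the role of the paper's discrete inequality. Near the $(d-2)$-edges your quarter-space model can be reduced back to the half-space by even reflection across the second Neumann face, so this is indeed routine.

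What each approach buys: the paper's method avoids cutoffs, commutators, and any edge bookkeeping entirely, because the Fourier basis is exact for the cube; the price is the case analysis and the somewhat ad hoc discrete Hardy argument. Your method is more in the spirit of boundary regularity theory, is indifferent to the precise domain (it would work for any Lipschitz box), and makes the appearance of Hilbert's inequality conceptually transparent; the price is the localization overhead you already flagged.
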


\begin{lemma}[Harmonic building block] 
\label{harm_building_block}
 Let $g$ be such that $\int_{\partial Q_L} g =0$ and solve
 \begin{equation}\nonumber
  \begin{cases}
    \Delta v &= 0 \quad \text{in } Q_L, \\
  \nabla v \cdot \nu &= g \quad \text{on } \partial Q_L.
  \end{cases}
 \end{equation}
 Then for $p\in[\,2(d-1)/d,2\,]$ there holds 
 \begin{equation*}
  \int_{Q_L} |\nabla v|^2 \lesssim L^{d-(d-1)\frac{2}{p}} \left(\int_{\partial Q_L} \vert g \vert ^p\right)^{\frac{2}{p}}.
 \end{equation*}
\end{lemma}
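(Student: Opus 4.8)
The plan is to reduce to the unit cube by scaling, where the estimate becomes the dimensionless inequality $\int_{Q_1}|\nabla v|^2 \lesssim \big(\int_{\partial Q_1}|g|^p\big)^{2/p}$ with $Q_1 := (0,1)^d$, and there to prove it by pairing the weak formulation of the Neumann problem with the Sobolev trace embedding. Since only $\nabla v$ enters the conclusion and the compatibility condition $\int_{\partial Q_L} g = 0$ makes the problem solvable with $v$ unique up to an additive constant, I fix the representative with $\int_{\partial Q_L} v = 0$. Setting $\tilde v(y) := v(Ly)$ and $\tilde g(y):=L\,g(Ly)$, the pair $(\tilde v,\tilde g)$ solves the same Neumann problem on $Q_1$ with zero boundary average, and a change of variables gives $\int_{Q_1}|\nabla\tilde v|^2 = L^{2-d}\int_{Q_L}|\nabla v|^2$ together with $\int_{\partial Q_1}|\tilde g|^p = L^{p-(d-1)}\int_{\partial Q_L}|g|^p$; hence the claimed bound is equivalent to the dimensionless one, the power $L^{d-(d-1)\frac2p}$ emerging exactly from these two scaling identities.

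On $Q_1$, testing the equation with $\tilde v\in \mathrm H^1(Q_1)$ yields $\int_{Q_1}|\nabla\tilde v|^2 = \int_{\partial Q_1}\tilde v\,\tilde g$. Let $p'$ be the Hölder conjugate of $p$; the hypothesis $p\in[\,2(d-1)/d,2\,]$ translates precisely into $p'\in[\,2,\,2(d-1)/(d-2)\,]$ (resp.\ $p'\in[\,2,\infty)$ if $d=2$), which is the range for which the Sobolev trace embedding $\mathrm H^1(Q_1)\hookrightarrow \mathrm L^{p'}(\partial Q_1)$ holds on the Lipschitz domain $Q_1$. Combining this trace embedding with the Poincaré-type inequality $\|\tilde v\|_{\mathrm H^1(Q_1)}\lesssim \|\nabla\tilde v\|_{\mathrm L^2(Q_1)} + \big|\int_{\partial Q_1}\tilde v\big|$ (proved by the standard compactness argument, the only constant functions with vanishing boundary integral being zero) and the normalization $\int_{\partial Q_1}\tilde v = 0$, we get $\|\tilde v\|_{\mathrm L^{p'}(\partial Q_1)}\lesssim \|\nabla\tilde v\|_{\mathrm L^2(Q_1)}$. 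Plugging this and Hölder's inequality on $\partial Q_1$ into the identity above gives $\int_{Q_1}|\nabla\tilde v|^2 \lesssim \|\nabla\tilde v\|_{\mathrm L^2(Q_1)}\|\tilde g\|_{\mathrm L^p(\partial Q_1)}$, hence $\|\nabla\tilde v\|_{\mathrm L^2(Q_1)}\lesssim \|\tilde g\|_{\mathrm L^p(\partial Q_1)}$, which is the dimensionless estimate; undoing the scaling finishes the proof.

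I do not expect a genuine obstacle here — the lemma is standard — so the only points requiring care are that the implicit constants in the trace and Poincaré inequalities stay bounded as $p$ ranges over the compact interval $[\,2(d-1)/d,2\,]$, so that the final constant depends on $d$ alone, and the (routine but error-prone) bookkeeping of the scaling exponents. One should also keep in mind that the argument uses the trace embedding at exponent $p'$, which is available on the stated range for $d\ge 3$ and for $p>1$ when $d=2$; the borderline pair $d=2$, $p=1$ sits exactly at the failure of $\mathrm H^1\hookrightarrow \mathrm L^\infty$ on the boundary and is not needed in the applications.
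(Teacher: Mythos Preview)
Your proof is correct and follows essentially the same route as the paper's: scale to the unit cube, integrate by parts to get $\int|\nabla v|^2=\int_{\partial} vg$, apply H\"older with the conjugate exponent, and close via the Sobolev trace embedding combined with a Poincar\'e inequality. The only cosmetic differences are that the paper first reduces to the endpoint $p=2(d-1)/d$ via Jensen (rather than treating all $p$ at once and tracking uniformity of constants) and normalizes $v$ by zero volume average instead of zero boundary average; your remark on the borderline $d=2$, $p=1$ is a fair caveat that the paper does not address.
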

\section{Pointwise estimates on the fields} \label{section_pointwise}

This technical section is devoted to pointwise bounds on the field $\bb$ coming
from a fixed energy minimizer $(u,\bb)$ in ${\mathcal A}^{per}(Q_L)$. Rather than the
field $\bb$, we are interested in its potential $v$, which in fact is also
horizontally periodic:

\begin{lemma}[Periodicity of the  potential $v$] \label{v_periodic}
There exists $v$ such that $\bb = - \nabla v$, $v$ is periodic in the horizontal directions and $v=0$ on $\{x_d=L\}$.
\end{lemma}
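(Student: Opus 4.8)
The statement is nothing but the Euler--Lagrange characterization of the field part of a $\mathcal A^{per}(Q_L)$-minimizer, so the plan is to redo the argument sketched right after \eqref{eq:overrelaxed}, but now with the flux on $\infbnd$ \emph{prescribed} (equal to $u$) rather than free, with the top face $\{x_d=L\}$ \emph{free}, and with the lateral faces identified. The only thing one has to be careful about is which boundary carries which condition on the potential: the prescribed flux on $\infbnd$ is why $v$ need not vanish there, while the free flux on the top face (together with horizontal periodicity) is exactly what forces $v$ to be constant on $\{x_d=L\}$, after which a normalization of the additive constant finishes the proof.

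\textbf{Steps.} First I would write the Euler--Lagrange equation for the field: fixing $u$ and perturbing $\bb\rightsquigarrow\bb+t\tilde\bb$, where $\tilde\bb\in\mathrm L^2(\mathbb T_L^{d-1}\times(0,L);\R^d)$ is horizontally periodic with $\Div\tilde\bb=0$ and $\tilde\bb\cdot\be_d=0$ on $\infbnd$ (so that $(u,\bb+t\tilde\bb)\in\mathcal A^{per}(Q_L)$ for every $t\in\R$), minimality of $t\mapsto\tfrac12\int|\bb+t\tilde\bb|^2$ at $t=0$ gives $\int_{\mathbb T_L^{d-1}\times(0,L)}\bb\cdot\tilde\bb=0$ for all such $\tilde\bb$; denote by $V$ the $\mathrm L^2$-closed span of these test fields. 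Next, restricting to the subspace $V_0\subset V$ of those $\tilde\bb$ that vanish on \emph{both} the top and bottom faces, the Helmholtz--Weyl decomposition on the cylinder $\mathbb T_L^{d-1}\times(0,L)$ (Section IX.3 of \cite{DautLionBook3}) identifies $V_0^\perp$ with the space of gradients of horizontally periodic $\mathrm H^1$ functions; since $\bb\perp V_0$, this yields a horizontally periodic $v\in\mathrm H^1$ with $\bb=-\nabla v$. Finally, to pin down the top trace I would test the orthogonality $\int\bb\cdot\tilde\bb=0$ against \emph{all} of $V$: integrating by parts and using $\Div\tilde\bb=0$, $\tilde\bb\cdot\be_d=0$ on $\infbnd$ and periodicity, one gets $\int_{\{x_d=L\}}v\,(\tilde\bb\cdot\be_d)=0$; since every mean-zero $g\in\mathrm L^2(\{x_d=L\})$ arises as the top normal trace of some $\tilde\bb\in V$ (solve the solvable periodic Neumann problem $\Delta\phi=0$ in the cylinder, $\nabla\phi\cdot\nu=g$ on the top, $=0$ on the bottom, and set $\tilde\bb=\nabla\phi$), the trace of $v$ on $\{x_d=L\}$ is a.e.\ a constant $c$. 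Replacing $v$ by $v-c$ preserves $\bb=-\nabla v$ and the periodicity, and achieves $v=0$ on $\{x_d=L\}$.

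\textbf{Main obstacle.} Everything above is soft once the two functional-analytic facts on the cylinder are in place: the Helmholtz--Weyl splitting with the periodic directions treated as boundaryless, and the surjectivity of the top normal-trace map from $V$ onto mean-zero $\mathrm L^2$ data. Both are standard (and in the same circle of ideas as the computation following \eqref{eq:overrelaxed}), but writing them cleanly for this mixed periodic/Neumann geometry, and checking that the relevant spaces of divergence-free $\mathrm L^2$ fields and their normal traces are the expected ones, is the only point requiring genuine care; I would simply cite \cite{DautLionBook3} for it.
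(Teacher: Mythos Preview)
Your proof is correct and rests on the same ingredients as the paper's---first variation gives orthogonality to admissible divergence-free perturbations, a Helmholtz-type decomposition then yields a potential, and the free top boundary forces that potential to be constant there. The one genuine difference is in how periodicity of $v$ is obtained. You apply the Helmholtz decomposition directly on the cylinder $\mathbb{T}_L^{d-1}\times(0,L)$, which hands you a periodic $v$ in one stroke. The paper instead first produces a potential $\tilde v$ on the simply-connected strip $\R^{d-1}\times(0,L)$, observes that periodicity of $\nabla\tilde v=-\bb$ forces $\tilde v(x)=v(x)+\xi'\cdot x'$ with $v$ periodic and $\xi'\in\R^{d-1}$, and then rules out the linear drift $\xi'$ by a \emph{second} appeal to minimality: replacing $\bb$ by $\bb-\dashint_{Q_L}\bb'$ preserves the constraint and strictly lowers the energy unless the horizontal mean $\dashint\bb'$ already vanishes, which combined with $\dashint\nabla' v=0$ gives $\xi'=0$. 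Your route is shorter and packages the mean-zero condition on $\bb'$ into the orthogonality against constant horizontal test fields (which lie in your $V_0$); the paper's route makes that use of minimality explicit.
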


We think of $v$ and $\bb$ as fields on $\mathbb{R}^{d-1}\times(0,L)$ that
are horizontally periodic; we are also interested in the corresponding (non-periodic)
over-relaxed problem 
$E^{\bb\cdot\nu}_0(Q_l)$ on $Q_l$ with flux boundary data given by $b\cdot \nu$ on $\Gamma_l$, 
cf.~(\ref{eq:overrelaxed}), for some $l\leq L$.
The upcoming pointwise bounds are on both potentials $v$ and $v_0$, which share the same flux boundary data on $\Gamma_l$,
as well as on the charge density $-\partial_d v_0$ on $\underline{Q}_l$ coming from the over-relaxed potential:

\begin{lemma}[Pointwise bound on the potentials] \label{hoelder_v}
The potential $v$ and $v_0$ are uniformly H\"older-$1/2$ continuous, i.e.  letting $[\, \cdot \,]_{\mathrm{C}^{1/2}(Q)}$ stand for the H\"older-$1/2$ semi-norm on the cube $Q$, there holds
\begin{equation}\nonumber
[\, v\,]_{\mathrm{C}^{1/2}(Q_L)} \lesssim 1 \quad \text{and} \quad
[\, v_0\,]_{\mathrm{C}^{1/2}(Q_l)} \lesssim 1.
\end{equation}
\end{lemma}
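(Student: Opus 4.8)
The plan is to prove Lemma~\ref{hoelder_v} by Campanato-type decay estimates for the volume-averaged energy, exploiting the one-step improvement already established in Lemma~\ref{itstep} together with the comparison to the over-relaxed problem in Lemma~\ref{shiftnonlin}. The basic heuristic is that H\"older-$1/2$ regularity of a function is equivalent, in Campanato's characterization, to the bound $\dashint_{Q_r}|v-c_r|^2\lesssim r$ for suitable constants $c_r$ at every scale and center; since $\bb=-\nabla v$, this amounts to $\dashint_{Q_r}|\nabla v|^2\lesssim r^{-1}$ after choosing the constant to be the vertical shift $\shift$ captured by $F(\shift,r)$. So the statement $[v]_{\mathrm{C}^{1/2}}\lesssim 1$ should follow once we know $F(\shift_r,r)\lesssim 1$ uniformly in $r$ with shifts $\shift_r$ that do not move too fast as $r$ varies.

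First I would run the Campanato iteration: starting from the global energy bound (Theorem~\ref{uniform_bound}, which via Lemma~\ref{basic} gives $E(u,\bb,Q_l)/l^{d-1}\lesssim 1$, hence $F(0,l)\lesssim l^{-1}\ll\delta$ for $l$ large), apply Lemma~\ref{itstep} repeatedly along the geometric sequence of scales $\theta^k l$. At each step the shift is adjusted by $|\shift-\tilde\shift|\lesssim F(\shift,\cdot)^{1/2}$, and the energy satisfies $F(\tilde\shift,\theta l)\lesssim\theta F(\shift,l)+l^{-1}$. Summing the geometric series shows $F(\shift_k,\theta^k l)\lesssim\theta^k F(\shift_0,l)+(\theta^k l)^{-1}\lesssim(\theta^k l)^{-1}$ for all $k$; this is essentially the content of Lemma~\ref{iter}, extended to keep track of the shifts. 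Crucially, the shift increments telescope: $|\shift_{k+1}-\shift_k|\lesssim F(\shift_k,\theta^k l)^{1/2}\lesssim(\theta^k l)^{-1/2}$, which is summable in $k$, so the shifts converge and stay in a bounded interval, and moreover $|\shift_k-\shift_j|\lesssim(\theta^j l)^{-1/2}$ for $k\geq j$, i.e.\ the shift at scale $r$ differs from the shift at scale $R\geq r$ by $\lesssim r^{-1/2}$ (not $R^{-1/2}$). Converting back: for two cubes $Q_r\subset Q_R$ with comparable centers, writing $c_r$ for the average of $v$ on $Q_r$, one gets $|c_r-c_R|\lesssim \big(\dashint_{Q_R}|v-c_R-\shift_R(x_d-\text{mid})|^2\big)^{1/2}+R|\shift_R-\shift_r|\lesssim R^{1/2}+R\cdot r^{-1/2}$; choosing $R\sim r$ and iterating dyadically, Campanato's lemma yields $|v(x)-v(y)|\lesssim|x-y|^{1/2}$, which is $[v]_{\mathrm{C}^{1/2}(Q_L)}\lesssim 1$. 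Since $v=-\nabla^{-1}\bb$ and $\bb$ is horizontally periodic, this argument is translation-invariant in the horizontal directions, and near the top and bottom faces one uses that $v$ is harmonic (Lemma~\ref{v_periodic} and the Euler--Lagrange equation) so interior/boundary elliptic estimates upgrade the $\mathrm{L}^2$-Campanato bound to the pointwise H\"older bound up to the boundary.

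For the over-relaxed potential $v_0$ on $Q_l$: by Lemma~\ref{shiftnonlin} (or directly Lemma~\ref{nonlinestimate} in the shifted form), $\|\bb-\shift\be_d+\nabla v_0\|_{L^2(Q_l)}^2$ is controlled by a superlinear power of the boundary flux, which by the already-established decay of $F$ is itself $\lesssim l^{d}\cdot F(\shift,l)\lesssim l^{d-1}$; thus $\int_{Q_l}|\nabla v_0|^2\lesssim\int_{Q_l}|\bb-\shift\be_d|^2+l^{d-1}\lesssim l^{d-1}$, and the same estimate localizes to every sub-cube $Q_r(x_0)\subset Q_l$ by applying the one-step and orthogonality machinery on that sub-cube. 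Since $v_0$ is harmonic in $Q_l$ and vanishes on $\underline{Q_l}$, the Campanato bound $\dashint_{Q_r}|\nabla v_0|^2\lesssim r^{-1}$ plus Lemma~\ref{Neumann}/Lemma~\ref{harm_building_block}-type interior estimates gives $[v_0]_{\mathrm{C}^{1/2}(Q_l)}\lesssim 1$ up to the boundary.

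The main obstacle I anticipate is bookkeeping the shifts across scales in a way that is uniform with respect to the center of the cube, not just the special centered cubes: Lemma~\ref{itstep} is stated for a minimizer in $\Aper$ and centered cubes, so to get a genuine H\"older seminorm one must either exploit horizontal periodicity for horizontal translations (easy) and treat the vertical direction separately using harmonicity of $v$ away from $\underline{Q_L}$ and the boundary condition $v=0$ on $\{x_d=L\}$, or re-derive a version of the one-step improvement valid on off-center cubes with flux boundary data inherited from $\bb$. A secondary subtlety is that the error term $l^{-1}$ in Lemma~\ref{itstep} is exactly what degrades the exponent from the naive $\theta^\alpha$-decay (any $\alpha<2$) down to the $1/2$ H\"older exponent — one must check that $\sum_k(\theta^k l)^{-1/2}$ rather than $\sum_k(\theta^k l)^{-1}$ is what actually bounds the oscillation, so that the resulting modulus of continuity is genuinely $r^{1/2}$ and no better, consistent with the remark after Theorem~\ref{final_convergence} that $1/2$ "comes up naturally through Lemma~\ref{pointwise_estimate}".
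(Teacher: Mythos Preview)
Your proposal has a genuine gap at small scales. The Campanato iteration of Lemmas~\ref{itstep} and~\ref{iter} is explicitly restricted to the regime $L\geq l\gg 1$: the one-step improvement requires both $l\gg 1$ and the smallness condition $F(\shift,l)\leq\delta$, and the iteration itself shows that $F(\shift_k,\theta^kL)\sim(\theta^kL)^{-1}$, which ceases to be small once $\theta^kL$ is of order one. So your argument produces $\int_{Q_r(x_0)}|\nabla v|^2\lesssim r^{d-1}$ only for $r\gtrsim 1$, which is nothing more than Theorem~\ref{uniform_bound}. Campanato's characterization of $C^{1/2}$, however, requires this decay down to $r\to 0$, and your appeal to ``boundary elliptic estimates'' to bridge the gap does not work near $\underline{Q_L}$: the potential $v$ is \emph{not} harmonic up to the bottom face, it satisfies the Neumann condition $\partial_d v=-u$ with $u\in\{-1,1\}$ discontinuous, so no standard Schauder or Calder\'on--Zygmund theory applies there. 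This is precisely the reason the paper introduces Lemma~\ref{small_scales}, which uses the single-layer potential representation and the mere boundedness $|u|\leq 1$ to obtain $|\bb(x',x_d)|\lesssim\ln(1/x_d)$ for $x_d\ll 1$; you make no use of this ingredient.

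The paper's proof is in fact entirely different and more direct: it combines the pointwise field bounds $|\bb(x',x_d)|\lesssim x_d^{-1/2}$ from Corollary~\ref{firstdecay} (large $x_d$, via subharmonicity of $|\bb|^2$ and Theorem~\ref{uniform_bound}) and Lemma~\ref{small_scales} (small $x_d$), then integrates $\nabla v=-\bb$ along a vertical half-circle joining $(x',0)$ to $(y',0)$ to obtain $|v(x',0)-v(y',0)|\lesssim|x'-y'|^{1/2}$. The H\"older modulus then propagates from the boundary to the interior via the Poisson kernel for the slab. For $v_0$, the paper observes that $w:=v-v_0$ is harmonic with homogeneous Neumann data on $\Gamma_l$, hence extends harmonically by even reflection to a full slab with H\"older-$1/2$ Dirichlet data inherited from $v$ on $\underline{Q_l}$; the triangle inequality then transfers the bound to $v_0$. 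No Campanato machinery enters.
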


\begin{lemma}[Pointwise bound on the over-relaxed charge density]\label{pointwise_estimate}
Let $Q_l$ and $v_0$ be as in 
the previous lemma. Then there holds
 \begin{equation} \nonumber
|\partial_d v_0 (x',0) |\lesssim \dist(x',\Gamma_l)^{-\frac{1}{2}} \quad \text{for all\, } (x',0)\in\underline{Q_l}.
\end{equation}
\end{lemma}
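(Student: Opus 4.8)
The plan is to exploit the harmonicity of $v_0$ in $Q_l$ together with its boundary conditions: $v_0=0$ on $\underline{Q_l}$ and $-\partial_\nu v_0 = \bb\cdot\nu$ on $\Gamma_l$. The quantity $\partial_d v_0(x',0)$ is precisely the (over-relaxed) charge density $-(\bb_0\cdot\nu)$ on the bottom face, so we need an interior-type gradient bound that degrades like $\dist(x',\Gamma_l)^{-1/2}$ as $x'$ approaches the lateral/top boundary. The natural approach is a rescaling argument: fix $(x',0)\in\underline{Q_l}$, set $r\defeq\dist(x',\Gamma_l)$, and work in the half-ball (or half-cube) $B_r^+$ centered at $(x',0)$, which by definition of $r$ sits inside $Q_l$ and touches $\underline{Q_l}$ along its flat part. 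On $B_r^+$ the function $v_0$ is harmonic and vanishes on the flat boundary, so by odd reflection across $\{x_d=0\}$ it extends to a harmonic function on the full ball $B_r$; interior gradient estimates for harmonic functions then give
\[
|\nabla v_0(x',0)| \;\lesssim\; \frac{1}{r}\,\osc_{B_r^+} v_0 \;\lesssim\; \frac{1}{r}\, [\,v_0\,]_{\mathrm{C}^{1/2}(Q_l)}\, r^{1/2} \;\lesssim\; r^{-1/2},
\]
where the last step uses the uniform $\mathrm{C}^{1/2}$ bound from Lemma~\ref{hoelder_v}. Since $\partial_d v_0(x',0)$ is one component of $\nabla v_0(x',0)$, this is exactly the claimed estimate.

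The one technical point to handle carefully is the geometry near the bottom \emph{edges} of $Q_l$, i.e.\ where $\underline{Q_l}$ meets $\Gamma_l$: there a half-ball of radius $r=\dist(x',\Gamma_l)$ centered at $(x',0)$ is genuinely contained in $Q_l$ and meets $\partial Q_l$ only along the flat face $\{x_d=0\}$, so the odd-reflection trick applies verbatim and no boundary condition on $\Gamma_l$ is ever used in the interior estimate — this is why only $\dist(x',\Gamma_l)$ enters, not $\dist(x',\partial\underline{Q_l})$ or anything involving the top face. (If one preferred cubes to balls, one uses a half-cube $(x'-s,x'+s)^{d-1}\times(0,s)$ with $s\sim r$; the reflected function is harmonic on the full cube and standard interior estimates on a slightly smaller cube give the gradient bound in terms of the oscillation.) I would also note that Lemma~\ref{hoelder_v} is stated for the cube $Q_l$, so the $\mathrm{C}^{1/2}$ seminorm controls $|v_0(y)-v_0(x',0)|\lesssim |y-x'|^{1/2}$ uniformly for $y\in B_r^+\subset Q_l$, which is all that is needed to bound $\osc_{B_r^+} v_0$.

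The main obstacle is not in the estimate itself — given Lemma~\ref{hoelder_v} it is essentially a one-line rescaling — but in making sure Lemma~\ref{hoelder_v} is actually available with a constant uniform in $l$ and independent of the flux data, and in verifying that the odd reflection of $v_0$ across $\{x_d=0\}$ is genuinely harmonic (this is the standard Schwarz reflection principle for the Laplacian with zero Dirichlet data on a flat piece, valid here since $v_0\in\mathrm{H}^1$ is harmonic in $Q_l$ and vanishes on $\underline{Q_l}$ in the trace sense, hence is smooth up to the open flat boundary by elliptic regularity). Once those two ingredients are in place, the proof reduces to: (1) reflect, (2) apply the interior gradient estimate $|\nabla(\text{harmonic})|\lesssim r^{-1}\osc$, (3) insert the $\mathrm{C}^{1/2}$ bound on the oscillation over $B_r^+$, and (4) read off the component $\partial_d v_0(x',0)$.
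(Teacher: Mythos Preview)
Your proposal is correct and follows essentially the same route as the paper: odd reflection of $v_0$ across $\{x_d=0\}$ to obtain a harmonic function on the ball $B_r$ with $r=\dist((x',0),\Gamma_l)$, followed by the interior gradient estimate $|\nabla v_0(x',0)|\lesssim r^{-1}\sup_{B_r}|v_0|$ and the $\mathrm{C}^{1/2}$ bound from Lemma~\ref{hoelder_v} (the paper phrases the last step as $|v_0(x)|\lesssim x_d^{1/2}$, which is the same thing since $v_0$ vanishes on $\underline{Q_l}$). Your additional remarks on the edge geometry and on why the reflection is legitimate are accurate but not needed beyond what the paper already assumes.
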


These two results will be crucially used in the constructions of Section \ref{section-last};
they just rely on the a priori bound of Theorem \ref{uniform_bound} and 
follow from elementary potential theory. To be more precise,
Lemma \ref{pointwise_estimate} is a straightforward consequence of the $v_0$-part of Lemma \ref{hoelder_v},
which in view of the relation between $v_0$ and $v$ easily follows
from the $v$-part of Lemma \ref{hoelder_v}. The $v$-part of Lemma \ref{hoelder_v}
is an easy consequence of the following two pointwise bounds on $\bb=-\nabla v$:

\begin{cor}[Pointwise bound on the field away from the surface]\label{firstdecay}
Given $(x',x_d)\in Q_L$ with $1\ll x_d \leq L$, there holds
\begin{equation*}
    \vert \bb (x',x_d) \vert \lesssim x_d\,^{-\frac{1}{2}}.
\end{equation*}
\end{cor}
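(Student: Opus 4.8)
The plan is to exploit the a priori energy bound of Theorem~\ref{uniform_bound} together with the fact that $\bb$ is divergence-free and horizontally periodic, so that its potential $v$ is harmonic away from $\underline{Q_L}$. The key observation is that the only charges sit on the hyperplane $\{x_d=0\}$, so in the slab $\{x_d>0\}$ the field $\bb=-\nabla v$ is harmonic, and by horizontal periodicity it satisfies a mean-value/interior-estimate machinery on cubes that stay away from the bottom face. First I would fix a point $(x',x_d)$ with $1\ll x_d\leq L$ and consider the cube (or ball) $B$ of radius $\sim x_d/2$ centered at it; this cube lies entirely in the open region $\{x_d>0\}$ (up to a constant factor, using $x_d\gg 1$ and, if needed, a reflection or the periodicity to handle the case $x_d$ close to $L$ where $v=0$ on the top face). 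On such a cube $\bb$ is harmonic, hence the interior $\mathrm{L}^\infty$ estimate for harmonic functions gives
\begin{equation*}
\vert \bb(x',x_d)\vert \lesssim \left(\dashint_{B} \vert \bb\vert^2\right)^{1/2} \lesssim x_d^{-d/2}\left(\int_{B}\vert\bb\vert^2\right)^{1/2}.
\end{equation*}

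Next I would bound $\int_B \vert\bb\vert^2$ by the local energy. Since $B$ is contained in a cube $Q_{l}$ of side $l\sim x_d$ (centered appropriately, using horizontal periodicity to recenter, and noting $l\leq CL$ so Theorem~\ref{uniform_bound} applies), we get $\int_B\tfrac12\vert\bb\vert^2\leq E(u,\bb,Q_l)\lesssim l^{d-1}\sim x_d^{d-1}$. Plugging this in yields
\begin{equation*}
\vert\bb(x',x_d)\vert \lesssim x_d^{-d/2}\, x_d^{(d-1)/2} = x_d^{-1/2},
\end{equation*}
which is exactly the claimed bound. The periodicity is what makes this clean: it lets the ``local'' energy bound of Theorem~\ref{uniform_bound}, which is stated for centered cubes, be applied around an arbitrary horizontal location, and it also guarantees $v$ (and hence $\bb$) is genuinely defined and harmonic on a full neighbourhood in the horizontal directions.

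The one point requiring a little care is the vertical extent: the cube $B$ of radius $\sim x_d/2$ around $(x',x_d)$ reaches up to height $\sim \tfrac32 x_d$, which could exceed $L$. Here I would either shrink the radius to $\sim \tfrac12\min\{x_d, L-x_d\}$ and, in the regime $x_d$ near $L$, use instead the boundary regularity coming from the homogeneous Dirichlet condition $v=0$ on $\{x_d=L\}$ (reflect $v$ oddly across $\{x_d=L\}$ to extend harmonically, then apply the interior estimate to the extension) — the energy on the reflected cube is still controlled by $\int_{Q_l}\vert\bb\vert^2\lesssim l^{d-1}$. Alternatively, since the statement only claims $x_d\leq L$, one can simply note that for $x_d\geq L/2$ the bound $x_d^{-1/2}\gtrsim L^{-1/2}$ is weak and already follows from $\vert\bb\vert\lesssim 1$ type estimates near the top; the genuinely interior case $1\ll x_d\leq L/2$ is handled by the argument above with no boundary issue. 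I expect the main (minor) obstacle to be bookkeeping the interaction between the ball radius, the enclosing centered cube, and the admissible range $l\leq L$ in Theorem~\ref{uniform_bound}, rather than anything conceptually hard; the heart of the proof is simply ``harmonic interior estimate $+$ a priori energy bound.''
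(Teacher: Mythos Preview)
Your proposal is correct and follows essentially the same approach as the paper: use horizontal periodicity to recenter, invoke the sub-harmonicity of $|\bb|^2$ (equivalently, the interior estimate for the harmonic field $\bb$) on a cube of side $\sim x_d$, feed in the energy bound from Theorem~\ref{uniform_bound}, and handle the range $x_d$ close to $L$ by odd reflection of $v$ across $\{x_d=L\}$. The paper's proof is just a slightly more compact version of exactly this argument.
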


\begin{lemma}[Pointwise bound on the field near the surface]\label{small_scales}
Given $(x',x_d) \in Q_L$ with $x_d\ll1$, there holds
\[
    \vert \bb(x',x_d)\vert \lesssim \ln\left (\dfrac{1}{x_d}\right ).
\]
\end{lemma}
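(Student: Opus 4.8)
The plan is to establish the logarithmic pointwise bound on $\bb = -\nabla v$ near the surface $\underline{Q_L}$ by decomposing the contributions to the field at a point $(x', x_d)$ with $x_d \ll 1$ into a ``near'' part coming from charges within horizontal distance $\sim 1$ and a ``far'' part coming from charges at horizontal distance $\gtrsim 1$. Since $v$ is the horizontally periodic potential with $\bb \cdot \be_d = u$ on $\underline{Q_L}$ and $\bb$ divergence-free, the field $\bb$ can be represented (up to a harmless harmonic correction accounting for the periodic boundary and the top face) via a Poisson-type kernel against the surface measure $u \, \mathcal{H}^{d-1}\mres \underline{Q_L}$; concretely, I would write $v$ as the single-layer potential of the measure $u\,\mathcal{H}^{d-1}\mres(\R^{d-1}\times\{0\})$ plus a function that is harmonic in a neighborhood of the surface, using Lemma \ref{v_periodic} to reduce to the periodic setting and then unfolding to $\R^{d-1}\times(0,L)$.

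\medskip

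First I would handle the far contribution: for charges at horizontal distance $r \gtrsim x_d$ from the base point, the gradient of the Newtonian/Poisson kernel is bounded by $\lesssim r^{-(d-1)}$, and integrating $r^{-(d-1)} \, r^{d-2}\,\dd r = r^{-1}\,\dd r$ over $x_d \lesssim r \lesssim 1$ produces exactly the logarithm $\ln(1/x_d)$; the tail $r \gtrsim 1$ is controlled by Corollary \ref{firstdecay} (or rather by the fact that at unit distance the field is already $\lesssim 1$, which follows from the $x_d \sim 1$ case of that corollary together with the a priori energy bound of Theorem \ref{uniform_bound}). Here the key input is that $|u| = 1$, so the charge density is bounded, and no cancellation is needed — the estimate is purely the size of the kernel's gradient integrated against a bounded density. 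Second, for the near contribution — charges within horizontal distance $\lesssim x_d$ — the singularity of $\nabla$(kernel) is $\sim |x-y|^{-(d-1)}$, but now the relevant length scale is $x_d$ itself, and since the point $(x',x_d)$ sits at height $x_d$ above the plane, the kernel gradient is bounded by $\lesssim x_d^{-(d-1)}$; integrating a bounded density over a ball of radius $\sim x_d$ in the plane gives $\lesssim x_d^{-(d-1)} \cdot x_d^{d-1} = \lesssim 1$, which is absorbed into the logarithm.

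\medskip

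The main obstacle I anticipate is making the representation formula rigorous and controlling the harmonic remainder: $v$ is not literally the single-layer potential of $u\,\mathcal{H}^{d-1}$ on all of $\R^d$, because of the top boundary condition $v = 0$ on $\{x_d = L\}$ and the horizontal periodicity; one must either use the periodic Green's function for the slab $\R^{d-1}/(L\Z) \times (0,L)$ with the mixed Neumann/Dirichlet data, and verify that its gradient obeys the same $|x-y|^{-(d-1)}$ bound at short distances with only mild (polynomially bounded, or even bounded) behavior at distances $\gtrsim 1$ up to the scale $L$, or — perhaps cleaner — localize: on a ball $B_{1/2}(x',0)$ the function $v$ solves $\Delta v = 0$ in $B_{1/2}^+ := B_{1/2} \cap \{x_d > 0\}$ with Neumann data $u$ on the flat part, so one can subtract off an explicit half-space single-layer potential of $u\,\mathcal{H}^{d-1}\mres B_{1/2}$ and estimate the difference, which is harmonic in $B_{1/2}^+$ with bounded (by Corollary \ref{firstdecay}/energy bound) boundary data on the curved part and zero Neumann data on the flat part, hence has bounded gradient in $B_{1/4}^+$ by standard interior-boundary elliptic estimates. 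Either way, the heart of the matter is the elementary computation $\int_{x_d}^{1} r^{-1}\,\dd r = \ln(1/x_d)$; everything else is bookkeeping to set up that integral legitimately, and the $|u|\le 1$ constraint is what keeps the density bounded so that no finer structure of $u$ is needed.
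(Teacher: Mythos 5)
Your localization route (option (b)) is essentially the paper's proof: compare $v$ with the explicit single-layer potential $\tilde v$ of $u\,\mathcal{H}^{d-1}\mres B_2^{d-1}$, estimate $\nabla\tilde v$ directly from the kernel by the integral $\int_0^4 r^{d-2}/(r+x_d)^{d-1}\,\dd r \lesssim \ln(1/x_d)$ (your near/far split is just this integral cut at $r\sim x_d$), and control the harmonic remainder $w=v-\tilde v$, which after even reflection across the flat part (using the zero Neumann data) is harmonic in a full ball. One small correction to the last step: the paper does not invoke pointwise boundary data for $w$ on the curved part of $B_{1/2}^+$ — trying to bound $v-\tilde v$ pointwise there would be awkward and borders on circular — but rather bounds $\sup_{B_1^+}|\nabla w|$ by the Dirichlet energy $\bigl(\int_{B_2}|\nabla w|^2\bigr)^{1/2}\lesssim\bigl(\int_{B_2^+}|\nabla\tilde v|^2+|\nabla v|^2\bigr)^{1/2}$, using sub-harmonicity of $|\nabla w|^2$ and the mean-value inequality, with $\int_{B_2^+}|\nabla v|^2\lesssim 1$ supplied by Theorem \ref{uniform_bound} applied to $Q_4$ (not Corollary \ref{firstdecay}, which only addresses $x_d\gg 1$). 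Since you do mention the energy bound as an alternative, this is a presentational slip rather than a real gap, and your option (a) via the slab Green's function would also work but is more baggage than needed.
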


The bound of Corollary  \ref{firstdecay}  is an immediate consequence of Theorem  \ref{uniform_bound} ,
whereas the bound of Lemma \ref{small_scales}, which we only use to extend the bound of
Corollary \ref{firstdecay} to the range $x_d\ll 1$, follows
by elementary potential theory from the boundedness of the charge density.
%
%
%
%
\section{Boundary conditions are negligible in the thermodynamic limit.}\label{section-last}
This section leads to Theorem \ref{final_convergence}
via a series of lemmas. 
 Throughout the section, $(u,\bb)$ denotes a minimizer in $\Aper$ with $L\gg 1$.  
The main strategy for the proof of Theorem \ref{final_convergence}
is to relate the local energy $E(u,\bb,Q_l)$ (always in the regime $l\gg 1$)
to the {\it sum} of $\sigma^{0}(Q_l)$ (or $\sigma^{per}(Q_l)$) 
and the energy $\int_{Q_l} \frac{1}{2} |\nabla v_0|^2$ of the solution $v_0$ to the over-relaxed 
problem \eqref{eq:overrelaxed} on $Q_l$ with flux boundary data $\bb\cdot \nu$ on $\Gamma_l$. On the level
of the lower bound, this is done in Lemma \ref{sigma_null_sigma}; for the upper bound, it is Lemma \ref{sigma_sigma_null}:

\begin{lemma}[Precise lower bound on the local energy]\label{sigma_null_sigma}
There exists a constant $C$, depending only on $d$, such that if $L\geq l\gg 1$, then there holds
\[
   \dfrac{1}{l^{d-1}}\left (E(u,\bb,Q_l)- \int_{Q_l} \dfrac{1}{2} \vert \nabla v_0\vert^2 \right) \geq \sigma^{0}(Q_l) -\dfrac{C}{l^{1/2}}.
\]
\end{lemma}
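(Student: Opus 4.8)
The plan is to establish the lower bound by extracting, from the minimizer $(u,\bb)$ restricted to $Q_l$, a competitor for the zero-flux problem on $Q_l$ whose energy is controlled by $E(u,\bb,Q_l)-\int_{Q_l}\frac12|\nabla v_0|^2$ up to the stated error. The natural object to subtract off is precisely the over-relaxed potential $v_0$ solving \eqref{eq:overrelaxed} on $Q_l$ with flux data $\bb\cdot\nu$ on $\Gamma_l$: by construction $\bb+\nabla v_0$ is divergence-free in $Q_l$ and has vanishing normal flux on $\Gamma_l$, so the pair $\big(u,\ \bb+\nabla v_0\big)$ already satisfies the divergence constraint and the Neumann condition defining $\mathcal{A}^0(Q_l)$ --- \emph{except} that its normal component on $\underline{Q_l}$ is $u-\partial_d v_0$ rather than $u\in\{-1,1\}$. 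So the heart of the argument is to repair this boundary discrepancy $\partial_d v_0|_{\underline{Q_l}}$ at a cost that is $o(l^{d-1})$, and more precisely $\lesssim l^{d-1}/l^{1/2}$.

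First I would use the orthogonality identity (Lemma \ref{orthogonality_lemma}), applied with field $\bb$ on $Q_l$ and the over-relaxed potential $v_0$ sharing its flux on $\Gamma_l$, to write
\[
\int_{Q_l}\tfrac12|\bb+\nabla v_0|^2=\int_{Q_l}\tfrac12|\bb|^2-\int_{Q_l}\tfrac12|\nabla v_0|^2,
\]
so that $E(u,\bb,Q_l)-\int_{Q_l}\tfrac12|\nabla v_0|^2=\int_{\underline{Q_l}}|\nabla u|+\int_{Q_l}\tfrac12|\bb+\nabla v_0|^2$, which is exactly the $\mathcal{A}^0$-type energy of the candidate $(u,\bb+\nabla v_0)$ up to the fact that this candidate is not admissible on $\underline{Q_l}$. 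Next I would build an admissible competitor $(\tilde u,\tilde\bb)\in\mathcal{A}^0(Q_l)$ by correcting the normal trace: let $w$ solve a Neumann problem on $Q_l$ (or on a slightly enlarged slab) with $\partial_\nu w=\partial_d v_0$ on $\underline{Q_l}$, $\partial_\nu w=0$ on $\Gamma_l$, and set $\tilde\bb=\bb+\nabla v_0+\nabla w$, keeping $\tilde u=u$; then $\tilde\bb\cdot\nu=u$ on $\underline{Q_l}$ and $\tilde\bb$ is divergence-free with zero flux on $\Gamma_l$, so $(\tilde u,\tilde\bb)\in\mathcal{A}^0(Q_l)$. By definition of $\sigma^0(Q_l)$ this gives
\[
l^{d-1}\sigma^0(Q_l)\le\int_{\underline{Q_l}}|\nabla u|+\int_{Q_l}\tfrac12|\bb+\nabla v_0+\nabla w|^2\le\int_{\underline{Q_l}}|\nabla u|+\int_{Q_l}\tfrac12|\bb+\nabla v_0|^2+C\int_{Q_l}|\nabla w|^2,
\]
where I have absorbed the cross term by Young's inequality at the price of a further $\int_{Q_l}|\nabla w|^2$. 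Rearranging and using the orthogonality identity yields the claim provided $\int_{Q_l}|\nabla w|^2\lesssim l^{d-1}/l^{1/2}$.

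The main obstacle --- and the reason the exponent $1/2$ appears --- is precisely this estimate on the corrector $w$, i.e.\ on the size of the over-relaxed charge density $\partial_d v_0$ on $\underline{Q_l}$. By elliptic estimates for the Neumann problem, $\int_{Q_l}|\nabla w|^2\lesssim l\int_{\underline{Q_l}}(\partial_d v_0)^2$ (a half-derivative loss from the $\dot H^{-1/2}$ trace, or more crudely from extending $\partial_d v_0$ harmonically), so it suffices to show $\int_{\underline{Q_l}}(\partial_d v_0)^2\lesssim l^{d-1-3/2}$. This is exactly where Lemma \ref{pointwise_estimate} enters: $|\partial_d v_0(x',0)|\lesssim\dist(x',\Gamma_l)^{-1/2}$, so $\int_{\underline{Q_l}}(\partial_d v_0)^2\lesssim\int_{\underline{Q_l}}\dist(x',\Gamma_l)^{-1}\lesssim l^{d-2}\int_0^{l}s^{-1}\,ds$, which diverges logarithmically --- so the crude bound is not quite enough and one must be slightly more careful: either truncate the corrector near $\Gamma_l$ (the zero-flux condition on $\Gamma_l$ means only a boundary layer of width $\sim1$ need be repaired sharply, while the bulk of $\underline{Q_l}$ already has $|\partial_d v_0|\lesssim l^{-1/2}$ by the interior part of the pointwise bound applied at scale $\sim l$), or interpolate the pointwise bound against the $L^2$-bound $\int_{\underline{Q_l}}(\partial_d v_0)^2\lesssim\int_{Q_l}|\nabla v_0|^2/1\lesssim l^{d-1}$ coming from Theorem \ref{uniform_bound}. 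Combining, one splits $\underline{Q_l}$ into the bulk where $|\partial_d v_0|\lesssim l^{-1/2}$, contributing $\lesssim l^{d-1}\cdot l^{-1}=l^{d-2}$, and a boundary collar of volume $\lesssim l^{d-2}$ where one uses $\int(\partial_d v_0)^2\lesssim l^{d-2}\cdot\text{(bounded log)}$; either way $\int_{\underline{Q_l}}(\partial_d v_0)^2\lesssim l^{d-2}$ up to logs, hence $\int_{Q_l}|\nabla w|^2\lesssim l^{d-1}$, giving the error $C/l$ --- better than the stated $C/l^{1/2}$, so the stated bound holds comfortably. (The point is that the $1/2$ exponent is dictated by the companion upper-bound Lemma \ref{sigma_sigma_null}, not by this one.) I would double-check that the corrector construction does not spoil periodicity or admissibility across the lateral faces of $Q_l$; since $\mathcal{A}^0(Q_l)$ imposes only $\bb\cdot\nu=0$ on $\Gamma_l$ and no periodicity, this is automatic, and the whole construction is local to $Q_l$.
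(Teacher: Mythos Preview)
Your proposal has two genuine gaps. First, you cannot keep $\tilde u=u$: membership in $\mathcal{A}^0(Q_l)$ forces $\int_{\underline{Q_l}}\tilde u=0$ (apply the divergence theorem to the divergence-free $\tilde\bb$ with vanishing flux on $\Gamma_l$), whereas $\int_{\underline{Q_l}}u$ has no reason to vanish for the restriction of the periodic minimizer. Equivalently, your Neumann problem for $w$ is incompatible, since $\int_{\underline{Q_l}}\partial_d v_0=-\int_{\underline{Q_l}}u\neq 0$ in general. The paper resolves this by genuinely modifying $u$ into a new $u^*$, using Lemma~\ref{lemma_flow} on a partition of $\underline{Q_l}$ into unit-scale cubes $\underline{Q_i}$: on each inner cube the volume fraction is shifted by the small amount $m_i=\dashint_{\underline{Q_i}}\partial_d v_0\lesssim\dist(Q_i,\Gamma_l)^{-1/2}$ (Lemma~\ref{pointwise_estimate}), while outer cubes are handled crudely.

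Second, even granting a corrector $w$, your error estimate fails. A single global harmonic corrector on $Q_l$ costs a factor $l$ via Lemma~\ref{harm_building_block}, and $\int_{\underline{Q_l}}(\partial_d v_0)^2$ has the logarithmic divergence you noticed; your suggested fixes do not remove it. More seriously, absorbing the cross term $\int_{Q_l}(\bb+\nabla v_0)\cdot\nabla w$ by Young's inequality either doubles the main term or, via Cauchy--Schwarz against $\int_{Q_l}|\bb+\nabla v_0|^2\lesssim l^{d-1}$ (Theorem~\ref{uniform_bound}), yields an error $\sim l^{d-1}$, i.e.\ $O(1)$ after normalization. The paper instead introduces \emph{localized} correctors $\bb_i$ on each $Q_i$ of side $\lambda\sim 1$ (so no factor $l$), applies Lemma~\ref{orthogonality_lemma} to $\bb+\sum_i\bb_i\mathbb{1}_{Q_i}$, and then estimates the remaining cross terms $\int_{Q_i}\bb\cdot\bb_i$ by integration by parts: writing $\bb=-\nabla v$ and invoking the H\"older-$1/2$ continuity of $v$ from Lemma~\ref{hoelder_v} gives $\big|\int_{Q_i}\bb\cdot\bb_i\big|\lesssim\sup_{\underline{Q_i}}|v-v(x_i')|\cdot\int_{\underline{Q_i}}|\bb_i\cdot\nu|\lesssim\dist(Q_i,\Gamma_l)^{-1/2}$, which sums to $l^{d-3/2}$ as required.
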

\begin{lemma}[Precise upper bound on the local energy]\label{sigma_sigma_null}
There exists a constant $C$, depending only on $d$, such that if $L\geq l\gg 1$, then there holds
\begin{equation}\label{eq:E_le_sigper}
  \dfrac{1}{l^{d-1}} \left (E(u,\bb, Q_l) - \int_{Q_l}\frac{1}{2}|\nabla v_0|^2\right) \leq  \sigma^{per}(Q_l) +  \frac{C}{l^{1/2}}.
\end{equation}
\end{lemma}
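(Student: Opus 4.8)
The inequality to prove is an \emph{upper} bound on the local energy $E(u,\bb,Q_l)$ in terms of the periodic optimal energy density $\sigma^{per}(Q_l)$ plus the over-relaxed field energy $\int_{Q_l}\frac12|\nabla v_0|^2$. The natural strategy is to build a competitor for the global minimizer $(u,\bb)\in\mathcal A^{per}(Q_L)$ by \emph{replacing} the configuration inside $Q_l$ with something cheaper: namely, a mesoscopically optimal periodic pattern $(u_*,\bb_*)$ realizing $\sigma^{per}(Q_l)$ (up to a small error), glued into the ambient $(u,\bb)$ on $Q_L\setminus Q_l$. Since $(u,\bb)$ is a global minimizer, the energy of this competitor is $\geq E(u,\bb,Q_L)$, and subtracting the unchanged contribution on $Q_L\setminus Q_l$ yields $E(u,\bb,Q_l)\leq E(u_*,\bb_*,Q_l)+(\text{gluing cost})$. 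The content is that the gluing cost is at most $\int_{Q_l}\frac12|\nabla v_0|^2+Cl^{d-3/2}$, i.e. the transition layer between the optimal pattern on $Q_l$ and the ambient field on $\Gamma_l$ costs essentially only the over-relaxed field energy, because the mismatch of the normal flux across $\Gamma_l$ is precisely what $v_0$ is designed to absorb.

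\textbf{Key steps.} First I would fix the scale $l$, consider the potential $v$ with $\bb=-\nabla v$ (Lemma \ref{v_periodic}), and the over-relaxed solution $v_0$ on $Q_l$ with flux data $\bb\cdot\nu$ on $\Gamma_l$. Second, I would take a near-optimal periodic competitor $(u_*,\bb_*)$ for $\sigma^{per}(Q_l)$ and extend $\bb_*$ (by zero normal flux, after subtracting off the over-relaxed field) so that $\bb_*-\nabla v_0$ has normal flux on $\Gamma_l$matching $\bb\cdot\nu$; the point is that $\bb\cdot\nu$ on $\Gamma_l$ has the same total flux as $\bb_*\cdot\nu$ has on $\underline{Q_l}$ (both equal the total charge, which the optimal pattern is free to arrange, possibly after a small modulation of $u_*$ via Lemma \ref{lemma_flow} to match the prescribed average). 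Third, the glued field $\tilde\bb$ on $Q_L$ equals $\bb$ outside $Q_l$ and $\bb_*-\nabla v_0$ inside, and is divergence-free with the correct trace on $\underline{Q_L}$; a thin transition region near $\Gamma_l$, handled by the elliptic estimate Lemma \ref{harm_building_block} applied to the flux mismatch, contributes only $\lesssim l^{d-1}\cdot l^{-1/2}$ because the pointwise bounds on $\bb$ (Corollary \ref{firstdecay}, Lemma \ref{small_scales}) and on $-\partial_d v_0$ (Lemma \ref{pointwise_estimate}) control the traces involved. Fourth, by global minimality $E(u,\bb,Q_L)\leq E^{per}(\tilde u,\tilde\bb,Q_L)$; cancelling the $Q_L\setminus Q_l$ part and expanding $\int_{Q_l}\frac12|\bb_*-\nabla v_0|^2 = \int_{Q_l}\frac12|\bb_*|^2 - \int_{Q_l}\bb_*\cdot\nabla v_0 + \int_{Q_l}\frac12|\nabla v_0|^2$; the cross term $\int_{Q_l}\bb_*\cdot\nabla v_0 = \int_{\partial Q_l}v_0\,\bb_*\cdot\nu$ vanishes on $\underline{Q_l}$ (where $v_0=0$) and on $\Gamma_l$ reduces to an error controlled by $l^{-1/2}$ via the Hölder bound on $v_0$ (Lemma \ref{hoelder_v}) and the $L^1$-bound on $\bb_*\cdot\nu$. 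Collecting terms, $E(u,\bb,Q_l)-\int_{Q_l}\frac12|\nabla v_0|^2 \leq E^{per}(u_*,\bb_*,Q_l)+Cl^{d-3/2}\leq l^{d-1}\sigma^{per}(Q_l)+Cl^{d-3/2}$, which is \eqref{eq:E_le_sigper}.

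\textbf{Main obstacle.} The delicate point is the construction of the divergence-free extension across $\Gamma_l$ together with the bookkeeping of all transition-layer errors: one must reconcile the horizontally periodic ambient field $\bb$ on $\Gamma_l$ with the (a priori non-periodic) optimal periodic pattern $\bb_*$ on $Q_l$, ensuring the total flux matches so that the divergence-free constraint with $\bb\cdot\nu=\pm1$ on $\underline{Q_l}$ can be satisfied, and then showing the interfacial cost of modulating $u_*$ (via the flow of Lemma \ref{lemma_flow}) to enforce this flux balance is negligible. The subtraction of $\nabla v_0$ is exactly what makes this possible --- the over-relaxed potential absorbs the "irreducible" part of the flux mismatch --- so the crux is verifying that the \emph{remaining} mismatch, after peeling off $v_0$, has small flux (by Lemma \ref{shiftnonlin}/\ref{nonlinestimate} type orthogonality, or directly because $\bb+\nabla v_0$ has vanishing normal flux on $\Gamma_l$ by construction of $v_0$) and therefore costs only the claimed $l^{-1/2}$-order error via Lemma \ref{harm_building_block}. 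I expect this gluing-and-error-accounting to be the bulk of the proof; everything else is the orthogonality expansion and the already-established pointwise and Hölder bounds.
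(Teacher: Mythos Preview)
Your overall strategy is right --- build a competitor from a periodic minimizer $(u^{per},\bb^{per})\in\mathcal A^{per}(Q_l)$, use (local) minimality of $(u,\bb)$, and peel off $\nabla v_0$ via orthogonality --- but there is a genuine gap in the construction of the competitor field.

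You propose using $\bb_*-\nabla v_0$ inside $Q_l$. On $\Gamma_l$ this has normal flux $\bb_*\cdot\nu+\bb\cdot\nu$, not $\bb\cdot\nu$: a periodic minimizer does \emph{not} have vanishing flux on $\Gamma_l$. Your plan to absorb the mismatch $\bb_*\cdot\nu$ on $\Gamma_l$ by a harmonic correction (Lemma~\ref{harm_building_block}) fails: the pointwise bound $|\bb_*|\lesssim x_d^{-1/2}$ makes $\int_{\Gamma_l}|\bb_*\cdot\nu|^2$ divergent near the bottom edge, so the correction is not $O(l^{d-3/2})$. Likewise, your direct estimate of the cross term $\int_{\Gamma_l} v_0\,\bb_*\cdot\nu$ via $|v_0|\lesssim x_d^{1/2}$ and $|\bb_*|\lesssim x_d^{-1/2}$ gives a product $\lesssim 1$, which integrated over $\Gamma_l$ yields only $O(l^{d-1})$ --- not $O(l^{d-3/2})$.

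The missing ingredient is a \emph{second} over-relaxed potential $v_0^{per}$, solving \eqref{eq:overrelaxed} on $Q_l$ with flux data $\bb^{per}\cdot\nu$ on $\Gamma_l$. The correct competitor field is $\bb^{per}+\nabla v_0^{per}-\nabla v_0$: now the flux on $\Gamma_l$ is exactly $\bb\cdot\nu$, so Lemma~\ref{orthogonality_lemma} applies cleanly and gives
\[
\int_{Q_l}\tfrac12|\bb^*|^2-\int_{Q_l}\tfrac12|\nabla v_0|^2=\int_{Q_l}\tfrac12|\bb^{per}+\nabla v_0^{per}+\bb_i|^2,
\]
after which a second application of orthogonality yields $\int\tfrac12|\bb^{per}+\nabla v_0^{per}|^2=\int\tfrac12|\bb^{per}|^2-\int\tfrac12|\nabla v_0^{per}|^2\le\int\tfrac12|\bb^{per}|^2$. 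The charge modulation on $\underline{Q_l}$ (Lemma~\ref{lemma_flow}) now has to compensate for $-\partial_d v_0^{per}-\partial_d v_0$, both controlled by Lemma~\ref{pointwise_estimate}. One further non-trivial point you did not anticipate: the cross terms $\int_{Q_i}\nabla v_0^{per}\cdot\bb_i$ in the error produce a contribution $\int_{Q_l}|\nabla v_0^{per}|^2$, which must be shown to be $\lesssim l^{d-3/2}$; this is obtained by applying Lemma~\ref{sigma_null_sigma} to the periodic minimizer $(u^{per},\bb^{per})$ together with Corollary~\ref{existence_of_limit} (which itself only relies on Lemma~\ref{sigma_null_sigma}, avoiding circularity).
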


Of course, both lemmas are established by constructions:
In Lemma \ref{sigma_null_sigma} we construct a candidate for ${\mathcal A}^0(Q_l)$ based on $(u,\bb+\nabla v_0)$,
which by definition of $-\nabla v_0$ has vanishing flux boundary data,
while Lemma \ref{sigma_sigma_null} uses a minimizer $(u^{per},\bb^{per})$ in ${\mathcal A}^{per}(Q_l)$ 
to construct a competitor for $(u,\bb)$ in $Q_l$ based on $(u^{per},\bb^{per}+\nabla v_0^{per}-\nabla v_0)$,
which 
 has the right boundary conditions if $v_0^{per}$ is the solution to the over-relaxed problem \eqref{eq:overrelaxed} with flux boundary data $g\defeq \bb^{per}\cdot \nu$ on $\Gamma_l$.
Loosely speaking, both lemmas express an orthogonality between the 
micro-structured part and the over-relaxed part, which is again based on Lemma \ref{orthogonality_lemma}. 
Both lemmas express this relationship on the level of a relative error decaying as
$l^{-1/2}$. An easy consequence of Lemma \ref{sigma_null_sigma} for $l=L$ and of Lemma \ref{basic} is the first
part of Theorem \ref{final_convergence} in the form of

\begin{cor}[Boundary conditions do not affect the global energy density] \label{existence_of_limit}
 The limit
 \begin{equation*}
  \sigma^* = \lim_{L\to\infty} \sigma (Q_L) = \lim_{L\to\infty} \sigma^0 (Q_L) = \lim_{L\to\infty} \sigma^{per} (Q_L)
 \end{equation*}
exists, with the convergence rate
\begin{equation*}
 \max\left \{|\sigma(Q_L) - \sigma^* |,|\sigma^0(Q_L) - \sigma^* |,|\sigma^{per}(Q_L) - \sigma^* |\right\}\lesssim \dfrac{1}{L^{1/2}}.
\end{equation*}
\end{cor}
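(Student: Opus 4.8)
The plan is to derive Corollary~\ref{existence_of_limit} from Lemma~\ref{sigma_null_sigma} (at the scale $l=L$) together with the comparison inequalities of Lemma~\ref{basic}, by showing that the three sequences $\sigma(Q_L),\sigma^0(Q_L),\sigma^{per}(Q_L)$ are mutually $O(L^{-1/2})$-close and form (up to this error) monotone families, hence Cauchy.

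\emph{Step 1: Discard the over-relaxed term at scale $l=L$.} Applying Lemma~\ref{sigma_null_sigma} with $l=L$ to the periodic minimizer $(u,\bb)$ gives
\[
\frac{1}{L^{d-1}}\Bigl(E(u,\bb,Q_L)-\int_{Q_L}\tfrac12|\nabla v_0|^2\Bigr)\ \geq\ \sigma^0(Q_L)-\frac{C}{L^{1/2}}.
\]
Since $\int_{Q_L}\tfrac12|\nabla v_0|^2\geq 0$ and, by definition of $\sigma^{per}$, $E(u,\bb,Q_L)\leq E^{per}(u,\bb,Q_L)=\sigma^{per}(Q_L)L^{d-1}$ (note $E\le E^{per}$ since the periodic energy additionally counts half the interface on $\partial\underline{Q_L}$), we obtain
\[
\sigma^{per}(Q_L)\ \geq\ \sigma^0(Q_L)-\frac{C}{L^{1/2}}.
\]
Combined with Lemma~\ref{basic}\,(\ref{sigma_basic_comparison}) ($\sigma\le\sigma^{per}$, $\sigma\le\sigma^0$) and Lemma~\ref{basic}\,(\ref{basic per free}) ($\sigma^{per}(Q_L)\le\sigma(Q_L)+C/L$), this already pins the three quantities within $O(L^{-1/2})$ of one another:
\[
\max\{|\sigma^0(Q_L)-\sigma(Q_L)|,\ |\sigma^{per}(Q_L)-\sigma(Q_L)|\}\ \lesssim\ \frac{1}{L^{1/2}}.
\]

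\emph{Step 2: Extract a limit via near-monotonicity.} It remains to show $\sigma(Q_L)$ converges at rate $L^{-1/2}$. Here I would use the two monotonicities in Lemma~\ref{basic}: (\ref{basic free kl}) says $\sigma(Q_L)\le\sigma(Q_{kL})$, i.e.\ $\sigma(Q_\cdot)$ is nondecreasing along geometric (integer-multiple) sequences, while (\ref{basic 0 kl}) says $\sigma^0(Q_{kL})\le\sigma^0(Q_L)$, i.e.\ $\sigma^0(Q_\cdot)$ is nonincreasing along such sequences. By Lemma~\ref{basic}\,(\ref{sigma_0_basic_bound}) and (\ref{basic lower bound}) both families are bounded in $(0,\infty)$, so along a fixed geometric sequence $\sigma(Q_{2^nL_0})$ increases to a limit and $\sigma^0(Q_{2^nL_0})$ decreases to a limit; by Step~1 these two limits coincide, call it $\sigma^*$, and $\sigma^*\in(0,\infty)$. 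To upgrade to convergence of the full family with rate $L^{-1/2}$: for arbitrary $L\ge L'\gg1$, pick $k$ with $kL'\ge L>$ (roughly) $(k-1)L'$; then $\sigma(Q_{L'})\le\sigma(Q_{kL'})$ and, comparing $\sigma$ with $\sigma^0$ at nearby scales via Step~1 plus the monotonicity of $\sigma^0$, one bounds $\sigma(Q_L)-\sigma(Q_{L'})$ by a constant times $(L')^{-1/2}$; a Cauchy argument then yields $|\sigma(Q_L)-\sigma^*|\lesssim L^{-1/2}$, and the same bound for $\sigma^0$ and $\sigma^{per}$ follows from Step~1.

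\emph{Main obstacle.} The only genuinely delicate point is reconciling the two monotonicities, which a priori hold only along integer-multiple (hence geometric) subsequences, into a rate estimate valid for \emph{all} large $L$; this requires a short interpolation/sandwiching argument using that doubling the scale changes $\sigma$ by at most the $O(L^{-1/2})$ gap established in Step~1. Everything else is a direct combination of Lemma~\ref{sigma_null_sigma} and the elementary inequalities of Lemma~\ref{basic}; in particular no new construction is needed, since Step~1 only uses the already-built $\mathcal A^0$-competitor underlying Lemma~\ref{sigma_null_sigma}.
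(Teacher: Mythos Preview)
Your Step~1 is correct and matches the paper's argument exactly: Lemma~\ref{sigma_null_sigma} at $l=L$ combined with Lemma~\ref{basic}\,(\ref{sigma_basic_comparison}) and (\ref{basic per free}) yields the chain
\[
\sigma^{per}(Q_L)\le\sigma(Q_L)+\tfrac{C}{L}\le\sigma^0(Q_L)+\tfrac{C}{L}\le\sigma^{per}(Q_L)+\tfrac{C}{L^{1/2}},
\]
so the three densities agree to order $L^{-1/2}$.

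Step~2, however, has a real gap. The sandwich $\sigma(Q_{L'})\le\sigma(Q_{kL'})\le\sigma^0(Q_{kL'})\le\sigma^0(Q_{L'})\le\sigma(Q_{L'})+C(L')^{-1/2}$ indeed shows that along each arithmetic progression $\{kL'\}_{k\in\mathbb N}$ the sequence converges with rate $(L')^{-1/2}$, and that the limit $a(L'):=\lim_k\sigma(Q_{kL'})$ satisfies $|a(L')-\sigma(Q_{L'})|\lesssim (L')^{-1/2}$. But nothing you wrote compares $\sigma$ (or $\sigma^0$) at two scales $L$ and $kL'$ that are merely \emph{close} rather than equal. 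Your sentence ``comparing $\sigma$ with $\sigma^0$ at nearby scales via Step~1'' does not do this: Step~1 compares $\sigma$ and $\sigma^0$ at the \emph{same} scale. So for incommensurable $L_1,L_2$ (with $L_1/L_2\notin\mathbb Q$) you cannot conclude $a(L_1)=a(L_2)$, and the full limit is not established. Your ``doubling the scale changes $\sigma$ by $O(L^{-1/2})$'' is correct but again only along multiples, so it does not close the gap.

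The paper supplies precisely the missing ingredient: a continuity-in-$L$ estimate obtained from a \emph{rescaling} construction. For $\lambda>0$ one sets $u^\lambda(\lambda\,\cdot)=u$, $\bb^\lambda(\lambda\,\cdot)=\bb$, notes $(u^\lambda,\bb^\lambda)\in\mathcal A^0(Q_{\lambda L})$, and computes $E(u^\lambda,\bb^\lambda,Q_{\lambda L})/(\lambda L)^{d-1}\le\max\{\lambda,\lambda^{-1}\}\,\sigma^0(Q_L)$. Together with the uniform bound $\sigma^0\lesssim 1$ this gives $|\sigma^0(Q_{L'})-\sigma^0(Q_L)|\lesssim L/L'-1$ for $L\ge L'$, which is exactly what lets one bridge $kL'$ and $L$ when $(k-1)L'<L\le kL'$, yielding $\sigma^0(Q_L)\le\sigma^0(Q_l)+C/l$ for all $L\ge l\gg 1$ and hence the existence of $\sigma^*$. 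Once existence is known, the rate follows from the monotonicities plus Step~1 as you indicated.
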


The second part of Theorem \ref{final_convergence} follows from Lemmas \ref{sigma_null_sigma} and \ref{sigma_sigma_null} once we establish
that the energy of the over-relaxed problem is negligible:

\begin{lemma}[Over-relaxed solutions are energetically negligible]\label{control-v_0}
Given $L\gg 1$ and $\theta \ll 1$ such that $\theta^{-1} \in 2\N+1$, 
if $l\gg 1$ is of the form $l= \theta^N L$ for some positive integer $N$ and $v_0$ is the solution to the over-relaxed problem \eqref{eq:overrelaxed} induced by $\bb$ on $Q_l$, then there holds
\[
    \Frac{1}{l^{d-1}}\int_{Q_l} \Frac{1}{2}\vert \nabla v_0\vert^2 \lesssim \Frac{1}{l^{1/2}}.
\]
\end{lemma}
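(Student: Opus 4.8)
The plan is to set up a Campanato-type iteration for the rescaled over-relaxed energy
\[
 G(l)\defeq\frac{1}{l^{d-1}}\int_{Q_l}\frac12|\nabla v_0^{(l)}|^2,
\]
where $v_0^{(l)}$ solves \eqref{eq:overrelaxed} on $Q_l$ with flux data $\bb\cdot\nu$ on $\Gamma_l$. The two-sided comparison of Lemmas \ref{sigma_null_sigma} and \ref{sigma_sigma_null} gives, for $l\geq l'\gg 1$ with both of the form $\theta^k L$,
\[
 \sigma^0(Q_{l})-\frac{C}{l^{1/2}}\le\frac{E(u,\bb,Q_{l})}{l^{d-1}}-G(l)\le\sigma^{per}(Q_{l})+\frac{C}{l^{1/2}} .
\]
Subtracting the instance at scale $l$ from the instance at scale $l'$ and using that $E(u,\bb,\cdot)$ is monotone under $Q_{l'}\subset Q_l$ (in the volume-averaged sense, $E(u,\bb,Q_{l'})\le E(u,\bb,Q_l)$), together with Corollary \ref{existence_of_limit} (which makes $\sigma^0(Q_{l'})$ and $\sigma^{per}(Q_l)$ both within $Cl^{-1/2}$ of the common limit $\sigma^*$), I expect to extract an estimate of the shape
\[
 G(l')\le \frac{l^{d-1}}{l'^{d-1}}\,G(l)+\frac{C}{l'^{1/2}}
\]
relating the over-relaxed energy at a small scale to the one at a large scale. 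This is the ``wrong direction'' for a naive iteration, which is exactly why one needs the decay of the \emph{local} energy; the key point is that the area-averaged local energy is essentially constant (equal to $\sigma^*$) up to $l^{-1/2}$ on the relevant scales, so the difference of the local energies across scales is itself $O(l^{-1/2})$, and it is this near-constancy that forces $G$ to be small rather than the bookkeeping above in isolation.

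Concretely, I would argue as follows. Fix $l=\theta^N L$ as in the statement. Applying Lemma \ref{sigma_sigma_null} at scale $l$ and Lemma \ref{sigma_null_sigma} at scale $L$ (i.e.\ $l'=L$ in the notation above, using $\sigma^0(Q_L)\le\sigma(Q_L)\le\sigma^{per}(Q_L)+C/L$ from Lemma \ref{basic}) yields
\[
 G(l)\le\frac{E(u,\bb,Q_l)}{l^{d-1}}-\sigma^{per}(Q_l)+\frac{C}{l^{1/2}}
 \le\frac{E(u,\bb,Q_L)}{l^{d-1}}\cdot\Big(\frac{l}{L}\Big)^{?}-\sigma^*+\frac{C}{l^{1/2}},
\]
so the real content is to show $E(u,\bb,Q_l)/l^{d-1}\le\sigma^*+C/l^{1/2}$, i.e.\ the upper half of \eqref{eq:unif_dis}. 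For that upper bound I would feed Lemma \ref{sigma_sigma_null} and the (already available) bound $G(l)\lesssim 1$ from Theorem \ref{uniform_bound} / Lemma \ref{hoelder_v} into the chain: the local energy is at most $\sigma^{per}(Q_l)+G(l)+Cl^{-1/2}\le\sigma^*+G(l)+Cl^{-1/2}$, while the lower bound from Lemma \ref{sigma_null_sigma} combined with $\sigma^0(Q_l)\ge\sigma^*-Cl^{-1/2}$ gives $E(u,\bb,Q_l)/l^{d-1}\ge\sigma^*+G(l)-Cl^{-1/2}$. These two pin $E(u,\bb,Q_l)/l^{d-1}-G(l)$ to $\sigma^*$ up to $l^{-1/2}$, but to close the loop and bound $G(l)$ itself I use monotonicity of $E(u,\bb,\cdot)$ once more, comparing scale $l$ with scale $\theta^{-1}l$: since the \emph{volume}-averaged quantity $F$ (rather than the area average) is controlled by the Campanato iteration of Lemma \ref{iter}, the difference of area-averaged local energies between consecutive dyadic-in-$\theta$ scales is summable, forcing $\sum_k$-type telescoping and hence $G(l)\le Cl^{-1/2}$.

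The honest way to organize this: define $e(l)\defeq E(u,\bb,Q_l)/l^{d-1}$ and note $e$ is non-increasing along $l=\theta^{-k}(\cdot)$ up to the error $Cl^{-1}$ coming from Lemma \ref{iter} (converting the volume average $F$ back to the area average $e$ costs a factor $l$, turning $Cl^{-1}$ into $C$, which is why one argues with $F$ directly at the level of the iteration and only reads off $e$ at the end). From Lemmas \ref{sigma_null_sigma}, \ref{sigma_sigma_null} and Corollary \ref{existence_of_limit},
\[
 |e(l)-\sigma^*-G(l)|\le\frac{C}{l^{1/2}}\qquad\text{for all }l\gg1\text{ of the admissible form}.
\]
Letting $l\to L$ gives $G(L)\le e(L)-\sigma^*+CL^{-1/2}\le CL^{-1/2}$ by Lemma \ref{basic}(\ref{basic per free}) and the definition of $\sigma^*$; but for general $l=\theta^NL$ we instead iterate: from $|e(l)-\sigma^*-G(l)|\le Cl^{-1/2}$ and $|e(\theta^{-1}l)-\sigma^*-G(\theta^{-1}l)|\le C(\theta^{-1}l)^{-1/2}$, subtracting and using $e(l)\le e(\theta^{-1}l)+C(\theta^{-1}l)^{-1}$ (the Campanato bound, valid because $F(0,\theta^{-1}l)\le\delta$ for $\theta^{-1}l$ large, which holds by Theorem \ref{uniform_bound} after possibly further diminishing scales), one gets $G(l)\le G(\theta^{-1}l)+C(\theta^{-1}l)^{-1/2}$. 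Summing the geometric series in the scales from $L$ down to $l$ and using $G(L)\lesssim L^{-1/2}$ yields $G(l)\lesssim l^{-1/2}$, which is the claim. \textbf{The main obstacle} I anticipate is precisely the direction-of-monotonicity issue: Lemmas \ref{sigma_null_sigma}–\ref{sigma_sigma_null} compare $e(l)-G(l)$ to scale-independent $\sigma$'s, so without the independent near-constancy of $e(l)$ (which is where Theorem \ref{uniform_bound}, Lemma \ref{iter}, and the monotonicity of $E(u,\bb,\cdot)$ must all be used in a compatible way, and where one must be careful that the $l^{-1}$ errors in the volume-averaged iteration do not degrade to $O(1)$ errors in the area average) there is no a priori smallness of $G(l)$ — it is the interplay of the two lemmas with the iteration that produces the telescoping sum, and getting the error terms to sum to $l^{-1/2}$ rather than $l^{-1/2}\log$ requires choosing the geometric ratio $\theta$ fixed and summing $\sum_k\theta^{k/2}$ cleanly.
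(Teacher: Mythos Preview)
Your iteration hinges on the step $e(l)\le e(\theta^{-1}l)+C(\theta^{-1}l)^{-1}$, which you attribute to ``the Campanato bound'' of Lemma~\ref{iter}. But Lemma~\ref{iter} does not say this: it controls the \emph{volume} average $F(0,l)=e(l)/l$ by $F(0,L)+Cl^{-1}$, which after multiplying by $l$ only recovers $e(l)\lesssim 1$ (i.e.\ Theorem~\ref{uniform_bound}), not any near-monotonicity of the \emph{area} average $e(\cdot)$. Without that monotonicity, the relation $|e(l)-\sigma^*-G(l)|\le Cl^{-1/2}$ (which is correct, from Lemmas~\ref{sigma_null_sigma}, \ref{sigma_sigma_null} and Corollary~\ref{existence_of_limit}) gives you nothing on $G(l)$ beyond $G(l)=e(l)-\sigma^*+O(l^{-1/2})$; bounding the right-hand side by $l^{-1/2}$ is exactly the upper half of \eqref{eq:unif_dis}, whose proof \emph{uses} Lemma~\ref{control-v_0}. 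So the argument as written is circular.

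The paper closes this gap not by comparing $e$ across scales, but by exploiting the \emph{boundary regularity of the over-relaxed problem} to produce a genuine contraction in $G$. Concretely: one partitions the bottom slab $Q_l\cap\{x_d<\theta l\}$ into cubes $Q^i$ of side $\theta l$, forms the competitor $\tilde\bb$ equal to $-\nabla v_0^i$ on each $Q^i$ and to $\bb$ above, and uses orthogonality (Lemma~\ref{orthogonality_lemma}) together with Lemmas~\ref{sigma_null_sigma} and \ref{sigma_sigma_null} on the $Q^i$ and on $Q_l$ respectively. The local energies $E(u,\bb,Q^i)$ and $E(u,\bb,Q_l)$ then \emph{cancel} (up to $O(l^{d-3/2})$) by sub-additivity, yielding $\int_{Q_{\theta l}}|\nabla v_0^{\theta l}-\nabla v_0^l|^2\lesssim l^{d-1}(\theta l)^{-1/2}$. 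The second, decisive, ingredient is the mean-value estimate $\int_{Q_{\theta l}}|\nabla v_0^l|^2\lesssim\theta^d\int_{Q_l}|\nabla v_0^l|^2$ (odd reflection across $\{x_d=0\}$ plus sub-harmonicity of $|\nabla v_0^l|^2$). Combining via the triangle inequality gives the one-step improvement $G(\theta l)\le C_0\big(\theta\,G(l)+(\theta l)^{-1/2}\big)$, and choosing $\theta$ small enough that $C_0\theta\le\tfrac12$ yields a true contraction $D_{\theta l}\le\tfrac12 D_l+Cl^{-1/2}$ that iterates cleanly from the anchor $D_L\lesssim L^{-1/2}$. The missing idea in your sketch is precisely this use of harmonic regularity for $v_0^l$ to manufacture the factor $\theta$; without it, the coefficient in front of $G$ in your recursion is $1$, and you cannot avoid appealing to the unproven smallness of $e(l)-\sigma^*$.
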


While not very technical, Lemma \ref{control-v_0} is subtle: The combination of Lemmas \ref{sigma_null_sigma} and \ref{sigma_sigma_null}
(in conjunction with Corollary \ref{existence_of_limit}) allows us to relate the over-relaxed solution of 
a big cube to those of smaller cubes that partition the big one. This drives yet
another Campanato iteration based on the (boundary) regularity of the over-relaxed problem,
akin to Lemmas \ref{itstep} and \ref{iter}.

\medskip

The constructions of Lemmas \ref{sigma_null_sigma} and \ref{sigma_sigma_null} are technical: In the case of Lemma \ref{sigma_null_sigma}, the pair
$(u,\bb+\nabla v_0)$ is not admissible in $\mathcal{A}^0(Q_l)$ due to the additional charge $-\partial_dv_0$
on $\underline{Q}_l$. Thanks to Lemma \ref{pointwise_estimate}, this extra charge is small outside of a boundary layer. In order to achieve the related scaling $l^{-1/2}$ of the
relative error, we adjust for this small amount of extra charge by a smooth
deformation of the set described by the characteristic function $u$. 
This type of deformation of $u\in\{-1,1\}$ into another characteristic function $\tilde u$ 
in order to change the volume (fraction) while controlling the surface area is folklore. We did not find in the literature a suitable statement for the quantitative modification of a set of finite perimeter, though one might
try and start from the perturbation defined in \cite{EspoFusc2011remark}. We state (and prove) what we need for the convenience of the reader:

\newcommand{\cube}{{\underline{Q}}}
\begin{lemma}[Smooth deformation of sets to adjust their volume] \label{lemma_flow}
Let $\lambda\geq 1$ and $\cube=(0,\lambda)^{d-1}$ be a $(d-1)$-dimensional cube. Consider a function $u=\{1,-1\}$ on $\cube$ with
\begin{align} 
    \left \vert \dashint_\cube u \right \vert &\leq \frac{1}{2}, \label{control_avg} \\
    \int_\cube |\nabla u| &\leq \lambda^{d-2} \Lambda, \label{control_perim}
\end{align}
for some $\Lambda <+\infty$. Then there exists a constant $m_0\in(0,+\infty)$, depending only on $\Lambda$ and $d$, such that for any $m\in [\,-m_0,m_0\,]$ there exists $\tilde u=\{-1,1\}$ with $(u-\tilde u)$ compactly supported in $\cube$ and
\begin{align*}
    \dashint_\cube\tilde  u = \dashint_\cube u +m,& \qquad
    \dashint_\cube |\tilde u -  u| \lesssim_\Lambda |m|, \\
    \int_\cube |\nabla \tilde u| - \int_\cube |\nabla u| &\lesssim_\Lambda \lambda^{d-2}|m|, 
\end{align*}
where the implicit constants in the last two inequalities depend only on $\Lambda$ and $d$.
\end{lemma}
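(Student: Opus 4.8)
The plan is to build $\tilde u$ by flowing the interface of $u$ along a carefully chosen vector field, so that the enclosed volume changes by the prescribed amount $m$ while the perimeter is controlled by a Gronwall-type argument. First I would reduce to the case $m>0$ (the case $m<0$ follows by considering $-u$, which satisfies the same hypotheses \eqref{control_avg}--\eqref{control_perim}), and I would rescale: writing $\cube=(0,\lambda)^{d-1}$ and setting $v(y)=u(\lambda y)$ on the unit cube $(0,1)^{d-1}$, the hypotheses become $|\dashint v|\leq \tfrac12$ and $\int |\nabla v|\leq \Lambda$, and the conclusions for $u$ with mass shift $m$ translate into conclusions for $v$ with mass shift $m$ and implicit constants depending only on $\Lambda,d$; the factor $\lambda^{d-2}$ in the perimeter bound is exactly what this scaling produces. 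So it suffices to treat $\lambda=1$.

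The core construction: fix a smooth compactly supported vector field $X$ on $(0,1)^{d-1}$ with $\int_{(0,1)^{d-1}}\Div X\,> c(d)>0$ — e.g. a bump times a radial field near the center — and let $\Phi_t$ be its flow. Set $E=\{v=1\}$ and $\tilde u_t = \mathbb 1_{\Phi_t(E)} - \mathbb 1_{(0,1)^{d-1}\setminus\Phi_t(E)}$. Then $t\mapsto \dashint \tilde u_t$ is smooth with derivative $2\int_{\partial^* E}(X\cdot\nu_{\Phi_t})\,d\mathcal H^{d-2}$ evaluated along the flow; the key point is that the \emph{total} rate of volume change of $\Phi_t(E)$ equals $\int_{\Phi_t(E)}\Div X$, which is \emph{not} controlled below a priori — this is the main obstacle. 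To handle it I would not prescribe $X$ once and for all but instead solve an ODE: at each time choose the flow generated by $X$ but note that $\frac{d}{dt}\mathcal L^{d-1}(\Phi_t(E)) = \int_{\Phi_t(E)}\Div X$. Since the constraint \eqref{control_avg} forces $\mathcal L^{d-1}(E)\in[\tfrac14,\tfrac34]$, and more importantly since we only need to move mass by a \emph{small} amount $m_0$, a compactness/continuity argument on the short time interval suffices: choose $X$ so that $\Div X\geq 0$ everywhere and $\Div X\geq 1$ on a fixed open ball $B\Subset(0,1)^{d-1}$; then $\frac{d}{dt}\mathcal L^{d-1}(\Phi_t(E))\geq \mathcal L^{d-1}(\Phi_t(E)\cap B)$. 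The delicate case is when $E\cap B$ is nearly empty — but then, because $\Phi_t$ is a diffeomorphism fixing the boundary and $\mathcal L^{d-1}(E)\geq\tfrac14$, mass from elsewhere is transported; here one uses that $X$ can additionally be taken to push the region $\{v=1\}$ toward $B$, guaranteeing $\mathcal L^{d-1}(\Phi_t(E)\cap B)\geq c'(d)\,t$ for small $t$ regardless of the initial configuration. Thus the volume is strictly increasing and covers an interval $[0,m_0]$ with $m_0=m_0(\Lambda,d)>0$; by the intermediate value theorem pick $t(m)$ with $\dashint\tilde u_{t(m)} = \dashint v + m$, and $t(m)\lesssim_\Lambda |m|$.

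It remains to estimate the perimeter and the $\mathrm L^1$ distance. For the perimeter, $\int|\nabla \tilde u_t| = 2\,\mathrm{Per}(\Phi_t(E);(0,1)^{d-1}) = 2\int_{\partial^* E} J^{\Phi_t}_{\tau}\,d\mathcal H^{d-2}$ where $J^{\Phi_t}_\tau$ is the tangential Jacobian; since $\|D\Phi_t - \mathrm{Id}\|_\infty \lesssim_X t$ for $t$ small, we get $\big|\int|\nabla\tilde u_t| - \int|\nabla v|\big| \lesssim_X t\int|\nabla v| \lesssim_\Lambda t \lesssim_\Lambda |m|$, using \eqref{control_perim}. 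For the $\mathrm L^1$ bound, $\dashint|\tilde u_t - v| = 2\,\mathcal L^{d-1}(E\,\triangle\,\Phi_t(E)) \leq 2\|\Phi_t-\mathrm{id}\|_\infty\,\mathcal H^{d-2}(\partial^* E) + o(t) \lesssim_\Lambda t \lesssim_\Lambda |m|$ — alternatively, and cleanly, $\dashint|\tilde u_t-v| = \big|\int_0^t \frac{d}{ds}\dashint|\tilde u_s - v|\,ds\big|$ which one bounds by $\int_0^t 2\int_{\partial^*\Phi_s(E)}|X|\,d\mathcal H^{d-2}\,ds\lesssim_\Lambda t$ since the perimeters stay comparable to $\int|\nabla v|\leq\Lambda$ on $[0,t]$. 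Undoing the rescaling restores the factors $\lambda^{d-1}$ (absorbed into $\dashint$) and $\lambda^{d-2}$ in the perimeter inequality, and since $(u-\tilde u)$ is the rescaling of $(v-\tilde u_{t(m)})$, which is supported in $\{\Phi_{t(m)}\neq\mathrm{id}\}\Subset(0,1)^{d-1}$, the compact-support requirement holds. The one genuinely nontrivial point, which I would flag explicitly, is the uniform lower bound $\frac{d}{dt}\mathcal L^{d-1}(\Phi_t(E)\cap B)\geq c'(d)t$ independent of the shape of $E$ — this is where the hypothesis $|\dashint v|\leq\tfrac12$ (guaranteeing a definite amount of mass to transport) is essential and is the crux of the proof.
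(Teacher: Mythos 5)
Your overall framework---flow the set along a compactly supported vector field, control the rate of volume change from below, estimate perimeter via the first-variation formula, conclude by an intermediate-value argument---is the right one and matches the paper's structure. But there is a fatal gap at the very heart of the argument, and it is precisely the point you flag at the end.

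You propose to fix a vector field $X$ once and for all, independently of $u$, with $\int_{(0,1)^{d-1}}\nabla\cdot X > c(d) > 0$ or, later, with $\nabla\cdot X \geq 0$ everywhere and $\nabla\cdot X\geq 1$ on a ball $B$. Neither is possible: a smooth vector field compactly supported in $(0,1)^{d-1}$ satisfies $\int\nabla\cdot X = 0$ by the divergence theorem, so it cannot have everywhere nonnegative divergence unless that divergence vanishes identically, and certainly cannot have positive integral. More to the point, $\frac{d}{dt}\big|_{t=0}\int u_t = \int u\,\nabla\cdot X$ has no sign for a fixed $X$: whatever support $\nabla\cdot X$ has, one can arrange $E=\{u=1\}$ (still satisfying \eqref{control_avg}--\eqref{control_perim}) so that $u$ is $-1$ on $\{\nabla\cdot X>0\}$ and $+1$ on $\{\nabla\cdot X<0\}$, making the derivative negative. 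Your proposed repair---that $X$ ``can additionally be taken to push the region $\{v=1\}$ toward $B$''---does not save the argument, because a single fixed flow cannot push mass into $B$ at a uniform rate for \emph{all} admissible configurations; if $E$ sits at bounded distance from $B$, the transported mass reaches $B$ only after a definite time, during which the volume does not increase.

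The idea you are missing is that the vector field must be \emph{adapted to $u$}. The paper solves the Neumann Poisson problem $-\Delta\psi = u - \dashint u$ with $-\nabla\psi\cdot\nu = 0$ on $\partial\cube$, and sets $\xi := -\eta_r\nabla\psi^r$ (a cut-off and mollified version). Then $\nabla\cdot\xi$ is $L^1$-close to $u-\dashint u$, and the crucial computation is
\[
\left.\frac{d}{dt}\right|_{t=0}\dashint_\cube u_t = \dashint_\cube u\,\nabla\cdot\xi \approx \dashint_\cube u\big(u-\textstyle\dashint u\big) = 1 - \Big(\dashint_\cube u\Big)^2 \geq \frac{3}{4},
\]
where the non-convex constraint $u^2=1$ is used pointwise and \eqref{control_avg} gives the lower bound. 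This is exactly how the hypothesis $|\dashint u|\leq \tfrac12$ ``guarantees a definite amount of mass to transport'': not through geometry of $E$ versus a fixed $B$, but through the algebraic identity $\int u(u-\dashint u) = |\cube|(1-(\dashint u)^2)$. Once the vector field is $u$-dependent, the uniform derivative bound you wanted is automatic, and the perimeter control follows (as you correctly sketch) from a uniform $C^1$ bound on $\xi$, which in turn comes from elliptic regularity for $\psi$ (using that the source $u-\dashint u$ is bounded) together with \eqref{control_perim} to bound the H\"older norm of $\nabla\psi$. The rest of your outline (Lipschitz continuity of the derivative in $t$, intermediate value theorem, rescaling back) is sound, but the construction of $\xi$ needs to be replaced wholesale.
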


Our application of Lemma \ref{lemma_flow} relies on the fact that we have ``enough room'' to 
perturb the local volume fraction. 
This is a consequence of Theorem \ref{uniform_bound}, which ensures that
the charge distribution $u$ is approximately neutral on large scales:

\begin{lemma}[Neutrality on large-scale averages]\label{control_u_avg}
 If $(u,\bb)$ is a minimizer in $\mathcal{A}^{per}(Q_l)$ and $l\geq \lambda \gg 1$, then there holds
 \begin{equation*}
  \left|\dashint_{\underline Q_\lambda} u \right| \leq \frac{1}{2}.
 \end{equation*}
\end{lemma}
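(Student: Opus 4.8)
The plan is to prove Lemma \ref{control_u_avg} by combining the energy bound of Theorem \ref{uniform_bound} with the divergence constraint. The key observation is that the average of $u$ over $\underline{Q}_\lambda$ is controlled by the flux of $\bb$ through the remaining faces of $Q_\lambda$: since $\Div \bb = 0$ in $Q_\lambda$ and $\bb\cdot\be_d = u$ on $\underline{Q}_\lambda$ (here we use that the minimizer $(u,\bb)\in\mathcal{A}^{per}(Q_l)$ has $\bb=-\nabla v$ with $v$ periodic, so the lateral fluxes over a centered sub-cube cancel in the periodic setting, or more carefully we integrate over the slab $\mathbb{T}_\lambda^{d-1}\times(0,t)$), integrating $\Div\bb=0$ gives
\begin{equation*}
\int_{\underline{Q}_\lambda} u = \int_{\mathbb{T}_\lambda^{d-1}\times\{0\}}\bb\cdot\be_d = \int_{\mathbb{T}_\lambda^{d-1}\times\{t\}}\bb\cdot\be_d
\end{equation*}
for (a.e.)~every height $t\in(0,\lambda)$. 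Averaging this identity over $t\in(0,\lambda)$ and applying Cauchy--Schwarz, one obtains
\begin{equation*}
\left|\int_{\underline{Q}_\lambda} u\right| \leq \frac{1}{\lambda}\int_{Q_\lambda}|\bb| \leq \frac{1}{\lambda}\left(\lambda^d\int_{Q_\lambda}|\bb|^2\right)^{1/2} \lesssim \lambda^{\frac{d}{2}-1}\, E(u,\bb,Q_\lambda)^{1/2}.
\end{equation*}

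Next I would invoke Theorem \ref{uniform_bound}, which gives $E(u,\bb,Q_\lambda)\lesssim \lambda^{d-1}$ for $\lambda\gg 1$ (and $\lambda\le l$). Plugging this in yields
\begin{equation*}
\left|\dashint_{\underline{Q}_\lambda} u\right| = \frac{1}{\lambda^{d-1}}\left|\int_{\underline{Q}_\lambda} u\right| \lesssim \frac{1}{\lambda^{d-1}}\cdot\lambda^{\frac{d}{2}-1}\cdot\lambda^{\frac{d-1}{2}} = \lambda^{-1/2}.
\end{equation*}
Hence $\left|\dashint_{\underline{Q}_\lambda} u\right|\lesssim \lambda^{-1/2}$, which is $\leq 1/2$ as soon as $\lambda$ exceeds a dimensional constant, i.e.~precisely under the hypothesis $\lambda\gg 1$. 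A small point to be careful about: Theorem \ref{uniform_bound} is stated for a minimizer in $\mathcal{A}^{per}(Q_L)$ and sub-cubes $Q_l$ with $L\ge l\gg1$; here the roles of $L$ and $l$ are renamed, with $(u,\bb)$ minimizing in $\mathcal{A}^{per}(Q_l)$ and $\lambda$ playing the role of the intermediate scale, so the hypotheses $l\ge\lambda\gg1$ match exactly.

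The main obstacle — really the only subtlety — is justifying the slicing identity $\int_{\underline{Q}_\lambda}u = \int_{\mathbb{T}_\lambda^{d-1}\times\{t\}}\bb\cdot\be_d$ rigorously, since $\bb$ is merely $\mathrm{L}^2$ and divergence-free, so traces on horizontal slices are not a priori defined pointwise in $t$. This is handled by working with the potential: by Lemma \ref{v_periodic} we have $\bb=-\nabla v$ with $v\in\mathrm{H}^1$ horizontally periodic, so for the periodic minimizer on $Q_l$ the function $t\mapsto \int_{\mathbb{T}_\lambda^{d-1}\times\{t\}}\partial_d v$ is (up to the usual Fubini argument on Sobolev functions) well-defined for a.e.~$t$ and constant in $t$ because $v$ is harmonic (indeed $\Div\bb=0$), its horizontal Laplacian integrates to zero over the torus. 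Then one integrates in $t$ and applies Cauchy--Schwarz as above. Alternatively, one tests the distributional equation $\Div\bb = u\,\mathcal{H}^{d-1}\mres\underline{Q}_\lambda$ against a function of the form $\psi(x_d)$ with $\psi$ a smooth cutoff that is $1$ near $x_d=0$ and supported in $[0,\lambda)$, then optimizes over $\psi$; this avoids trace issues entirely and produces the same bound $|\int_{\underline{Q}_\lambda}u|\lesssim \lambda^{-1}\|\bb\|_{\mathrm{L}^1(Q_\lambda)}$. Either route is routine, and the rest of the argument is the elementary chain of inequalities above.
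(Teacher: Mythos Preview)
Your argument has a genuine gap in the case $\lambda < l$. The field $\bb$ is horizontally periodic with period $l$, not $\lambda$, so the lateral fluxes through opposite faces of the sub-cube $Q_\lambda$ do \emph{not} cancel, and your slicing identity $\int_{\underline{Q}_\lambda} u = \int_{\{x_d=t\}\cap Q_\lambda} \bb\cdot\be_d$ is false. Writing $\mathbb{T}_\lambda^{d-1}$ does not help: there is no $\lambda$-periodicity to appeal to. Your proposed alternative of testing against a function $\psi(x_d)$ depending only on $x_d$ has the same defect: such a test function does not vanish on the lateral faces of $Q_\lambda$, so integrating by parts over $Q_\lambda$ produces uncontrolled lateral boundary terms; and if you instead test over the full $l$-torus you recover $\int_{\underline{Q}_l} u$ rather than $\int_{\underline{Q}_\lambda} u$.

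The fix is easy and is exactly what the paper does: use a smooth cutoff $\eta$ in \emph{all} variables, compactly supported in $Q_\lambda \cup \underline{Q_\lambda}$, with $|\nabla\eta|\lesssim 1/\lambda$ and $\dashint_{\underline{Q_\lambda}}(1-\eta)\le 1/4$. Testing the constraint \eqref{eq:constraints} with $\eta$ gives $\int_{\underline{Q_\lambda}} \eta u = -\int_{Q_\lambda}\nabla\eta\cdot\bb$, with no lateral boundary contribution. From here your chain of inequalities --- Cauchy--Schwarz and Theorem~\ref{uniform_bound} yielding $|\dashint_{\underline{Q}_\lambda}\eta u|\lesssim\lambda^{-1/2}$ --- goes through verbatim, and the error from $\eta\neq 1$ on $\underline{Q_\lambda}$ is absorbed into the constant $1/4$.
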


\section{Proofs}\label{section_Proofs}
In this section, we provide the proofs of the theorems, lemmas, and corollaries in the order they were stated.
\begin{proof}[Proof of Theorem \ref{final_convergence}]
The convergence of $\sigma(Q_L)$, $\sigma^{per}(Q_L)$, and $\sigma^0(Q_L)$ to $\sigma^*$ as $L\uparrow +\infty$, is established in Corollary \ref{existence_of_limit}.
We turn to (\ref{eq:unif_dis}). We start by considering a cube $Q_l$ contained in $Q_L$ with $l=\theta^n L$ for some integer $n$ and some $\theta\sim 1$ with $\theta^{-1} \in 2\mathbb{N}+1$ such that Lemma \ref{control-v_0} holds. By Lemma \ref{sigma_null_sigma}, Lemma \ref{sigma_sigma_null}, and (\ref{eq:densityconv}) at scale $l$, there holds
\[
  \left \vert \dfrac{E(u,\bb, Q_l)}{l^{d-1}} - \dfrac{1}{l^{d-1}}\int_{Q_l}\dfrac{1}{2}|\nabla v_0|^2 - \sigma^*\right \vert \lesssim \dfrac{1}{l^{1/2}}.
\]
By Lemma \ref{control-v_0}  the term involving $\nabla v_0$ is controlled by $C l^{-1/2}$ and we may conclude for such cubes.

\medskip

So far, we have proved (\ref{eq:unif_dis})
for boxes of lateral size $l$ of the form $\theta^n L$, 
$n\in\mathbb{N}$,
where $\theta^{-1}\in2\mathbb{N}+1$ with $\theta\sim 1$ coming from Lemma \ref{control-v_0}. 
We notice that Lemma \ref{sigma_null_sigma} and Corollary \ref{existence_of_limit} imply
\begin{align}\label{as01}
E(u,\bb,Q)-\sigma^*|\underline{Q}|\gtrsim -l^{d-3/2}
\end{align}
for all boxes $Q$ of size $l\leq L$ (the statement is trivial for $l\ll 1$).
We note that the l.~h.~s.~$E(u,\bb,C)-\sigma^*|\underline{C}|$ is super-additive
in sets $C$ touching the bottom. Since
any $l\in[\,0,L)$ can be written as linear combination of $\{2^{-n}L\}_{n\in\mathbb{N}}$
with coefficients in $\{0,1\}$, and since $d-3/2>0$ we obtain that (\ref{as01})
holds for any non-square boxes $C$ of lateral size smaller than $l$, too.
For a given box $Q$, there exists a box $Q'$ of side-length of the form $\theta^nL$ 
and $d$ non-square boxes $C$ such that $\underline{Q'}$ is the (disjoint) union of
$\underline{Q}$ and the $\underline{C}$'s. Hence by the above-mentioned super-additivity,
and by (\ref{as01}), we may lift the opposite estimate
\begin{align*}
E(u,\bb,Q)-\sigma^*|\underline{Q}|\lesssim l^{d-3/2}
\end{align*}
from $Q'$ to $Q$.
\end{proof}

\begin{proof}[Proof of Theorem \ref{equipartition}] 
We closely follow the proof of \cite[Theorem~1.2]{ACO09}. Let $(u,\bb)$ be a minimizer in $\Aper$, by Theorem \ref{final_convergence} there exists $C\in(0,+\infty)$ such that if $L\geq l \gg 1$, then there holds
\begin{equation}\label{eq:f1}
    \dfrac{E(u,\bb,Q_l)}{l^{d-1}}\leq \sigma^* + \dfrac{C}{l^{1/2}}.
  \end{equation}
  On the other hand, for $\lambda\in [\, 1/2,3/2\,]$ the rescaled pair $(u^\lambda,\bb^\lambda)$ defined by
\[
    u^\lambda(\lambda x ) = u(x) \quad\text{ and }\quad \bb^\lambda(\lambda x) = \bb(x)
\]
is a candidate in $\mathcal{A}^{per}(Q_{\lambda L})$. In particular, the restriction of $(u^\lambda,\bb^\lambda)$ to $Q_{\lambda l}$ is a candidate in $\mathcal{A}(Q_{\lambda l}).$ Using this and \eqref{eq:densityconv} in Theorem~\ref{final_convergence} at scale $\lambda l$, we get
\begin{equation}\label{eq:f2}
    f(\lambda) \defeq \dfrac{E(u^\lambda,\bb^\lambda, Q_{\lambda l})}{(\lambda l)^{d-1}}\geq \sigma(Q_{\lambda l})
\geq \sigma^* - \dfrac{C}{(\lambda l)^{1/2}}.
\end{equation}
In this notation, the combination of (\ref{eq:f1}) and (\ref{eq:f2}) 
yields 
\begin{align}\label{co01}
f(\lambda)-f(1)\gtrsim-\frac{1}{l^{1/2}}\quad\mbox{and}\quad f(1)\lesssim 1.
\end{align}
It follows from a change of variables that $f$ is of the form 
\begin{align}\label{co02}
f(\lambda)=\frac{1}{\lambda}a+\lambda b\quad\mbox{with}
\;a:=\dfrac{1}{l^{d-1}} \int_{\underline{Q_l}}|\nabla u|,
\;b:=\dfrac{1}{l^{d-1}}\int_{Q_l}\frac{1}{2}|\bb|^2.
\end{align}
Since $f''(\lambda)=2a/\lambda^3\le 16 f(1)$, it follows from the second item in (\ref{co01})
that $|f''(\lambda)|\lesssim 1$, so that by Taylor's formula for $t\in [\,-1/2,1/2\,]$,
\begin{align*}
\Big|f'(1)- \dfrac{1}{t}\big(f(1+ t)-f(1)\big)\Big|
\lesssim |t|.
\end{align*}
Thus, for $t= \pm 1/l^{1/4}$ and by the first item in (\ref{co01}) this implies $|f'(1)|\lesssim1/ l^{1/4}$, which because of $f'(1)=b-a$, see (\ref{co02}), yields the claim of the theorem.
\end{proof}

\begin{proof}[Proof of Theorem \ref{uniform_bound}]
    Let $(\bb, u)$ be minimizing in $\Aper$. Provided $L\gg 1$, by statements (\ref{basic per free}), (\ref{sigma_basic_comparison}) and (\ref{sigma_0_basic_bound}) of  Lemma~\ref{basic}, and in view of definition (\ref{eq:def-sigper}), we have
\[
\dfrac{E(u, \bb, Q_L)}{L^{d-1}} \lesssim 1.
\]
 By definition (\ref{eq:def-F}) again provided  $L\gg 1$, we thus have $F(0,L)\leq C/L \leq \delta$, where $\delta$ is as in Lemma~\ref{iter}. Applying this Lemma for $L\geq l\gg 1$, we get
    \begin{equation*}
        E(u,\bb, Q_l) \lesssim \dfrac{l^{d}}{L^{d}}E(u,\bb,Q_L)+ l^{d-1} =l^{d-1}\left( \dfrac{l}{L}\dfrac{E(u,\bb,Q_L)}{L^{d-1}}+1 \right ) \lesssim l^{d-1},
    \end{equation*}
as desired.
\end{proof}

\begin{proof}[Proof of Lemma \ref{basic}]
To prove \eqref{sigma_basic_comparison} notice that a minimizer $(u,\bb)$ in $\mathcal{A}^{per}(Q_L)$, when thought of as peridically extended and then restricted to $Q_L$, is also a candidate in $\mathcal{A}(Q_L)$, and that $E^{\per}(u,\bb,Q_L) \geq E(u,\bb, Q_L)$, leading to $\sigma^{per}(Q_L) \geq \sigma(Q_L)$. Similarly, we have $\sigma^0(Q_L)\geq\sigma(Q_L)$.

\medskip

To prove \eqref{basic free kl}, let $k$ be a positive integer and consider a minimizer $(u,\bb)$ in $\mathcal{A}(Q_{kL})$. We decompose $Q_{kL}$ in $k^{d-1}$ cubes $\{Q^i_L\}$ of side length $L$ touching the lower boundary, 
and a large box above. 
Since by definition of $\sigma$, $E(u,\bb, Q_L^i) \geq L^{d-1} \sigma(Q_L)$, we obtain
\begin{equation*}
\sigma(Q_{kL}) = \dfrac{E(u,\bb,Q_{kL})}{(kL)^{d-1}}\geq \dfrac{1}{k^{d-1}} \sum_{i=1}^{k^{d-1}} \dfrac{E(u,\bb, Q_L^i)}{L^{d-1}} \geq \sigma(Q_L).
\end{equation*}

\medskip

Statement~\eqref{basic 0 kl} comes from concatenating $k^{d-1}$ translated copies of a minimizer in $\mathcal{A}^0(Q_L)$ and finally extending $\bb$ by $0$ on $x_d\geq L$. There is no added interface if each copy is the even reflection of its neighbour across the cubic face they share. We refer to Sections 3.6 to 3.8 in \cite{AmFuPa_book} for the continuity and trace properties that make it possible to cut and paste BV functions in neighbourhing cubes.

\medskip

To prove \eqref{sigma_0_basic_bound} notice that by \eqref{basic 0 kl} it suffices to show that $\sigma^0(Q_L) \lesssim 1$ for $L\in[\,1,2\,]$. This is seen by constructing $(u,\bb)\in \mathcal{A}(Q_L)$. Indeed, let $u(x_1,\dots,x_{d-1}) \defeq \sign x_1$ and
\[
   \bb (x_1,\dots,x_{d-1}, x_d) =\begin{cases} \dfrac{x_1}{\sqrt{x_1^2+x_d^2}} \be_d - \dfrac{x_d}{\sqrt{x_1^2+x_d^2}} \be_1, \quad &\text{if } x_1^2+x_d^2\leq L^2,\\
     \qquad 0 &\text{otherwise}.
   \end{cases}
   \] 
We notice that inside $\{x_1^2+x_d^2 \leq L^2\}$, this two-dimensional vector field is the rotated gradient of the ''stream function'' $\psi(x_1,x_d) = \sqrt{x_1^2 +x_d^2}-L$, and thus divergence-free and tangential to $\{x_1^2+x_d^2 = L^2\}$. Hence its trivial extension is weakly divergence-free. It vanishes on $\Gamma_L$, and satisfies $\bb\cdot \be_d = \sign x_1= u$ on $\underline{Q_L}$, so that the boundary conditions are also satisfied.

\medskip

We now turn to (\ref{basic lower bound}). As propagated in \cite{CintiOtto}, scaling-wise
optimal lower bounds in pattern-forming variational problems typically rely on 
interpolation inequalities that capture the leading-order competition between the
energy contributions. Here, the interpolation estimate involves the $BV$ norm
and the $\dot H^{-\frac{1}{2}}$ norm -- we now give an elementary proof.
 Choosing $l$ to be sufficiently large but of order one, it is enough to
show for $(u,\bb)\in{\mathcal A}(Q_L)$ that
\begin{align*}
L^{d-1}\lesssim l E(u,\bb,Q_L)+\frac{L^{d-1}}{l^2}.
\end{align*}
By Young's inequality, for this it suffices to  establish for $l\ll L$
\begin{align}\label{ap05}
|\underline{Q_{L-2l}}|\lesssim l\int_{\underline{Q_L}}|\nabla u|
+\Big(\frac{L^{d-1}}{l}\int_{Q_L}|\bb|^2\Big)^\frac{1}{2}.
\end{align}
To this purpose, we fix a cut-off function $\eta\in[\,0,1\,]$ with
\begin{align}\label{ap01}
\eta=\left\{\begin{array}{ccc}
1&\mbox{on}&{\textstyle(-\frac{L-2l}{2},\frac{L-2l}{2})^{d-1}\times\{0\}},\vspace{.1cm}\\
0&\mbox{out of}&{\textstyle(-\frac{L-l}{2},\frac{L-l}{2})^{d-1}\times[\,0,l),}
\end{array}\right\}
\quad\mbox{while}\quad
|\nabla\eta|\lesssim\frac{1}{l}.
\end{align}
By convolution at scale $l$, we construct $\tilde u\in[\,-1,1\,]$ such that
\begin{align}\label{ap02}
\int_{\underline{Q_{L-l}}}|\tilde u-u|\lesssim l\int_{\underline{Q_L}}|\nabla u|
\quad\mbox{while}\quad|\nabla\tilde u|\lesssim\frac{1}{l}.
\end{align}
In particular, we obtain from this and the support condition in (\ref{ap01})
\begin{align}\label{ap04}
\int_{\mathbb{R}^{d-1}}\vert \eta u(u-\tilde u)\vert \lesssim l\int_{\underline{Q_L}}|\nabla u|.
\end{align}
We now test (\ref{eq:constraints}) with $\zeta=\eta\tilde u$, where we think
of $\tilde u$ as being extended in a constant way to $x_d>0$. This yields
\begin{align*}
\int_{\mathbb{R}^{d-1}}\eta \tilde u u=-\int_{\mathbb{R}^d}\nabla\zeta\cdot \bb,
\end{align*}
which in view of $|\nabla\zeta|\lesssim 1/l$ (see the last items in (\ref{ap01}) and (\ref{ap02})) and the H\"older inequality,
implies
\begin{align}\label{ap03}
\int_{\mathbb{R}^{d-1}}\eta \tilde u u\lesssim\Big(\frac{L^{d-1}}{l}\int_{Q_L}|\bb|^2\Big)^\frac{1}{2}.
\end{align}
Because of the non-convex constraint in form of $u^2=1$, the sum of (\ref{ap04}) and
(\ref{ap03}) yields
\begin{align*}
\int_{\mathbb{R}^{d-1}}\eta\lesssim l\int_{\underline{Q_L}}|\nabla u|
+\Big(\frac{L^{d-1}}{l}\int_{Q_L}|\bb|^2\Big)^\frac{1}{2},
\end{align*}
which by (\ref{ap01}) turns into (\ref{ap05}).

\medskip

To prove (\ref{basic per free}), we let $(u,\bb)$ be an optimal configuration in $\mathcal{A}(Q_L)$. 
On one of the $2^{d-1}$ (horizontal) quadrants of $Q_L$, the energy per area does not exceed the original one, w.l.o.g. we may assume that this is the case
for $(0,L/2)^{d-1}\times(0,L)$. 
We extend the restriction of $(u,\bb)$ to this quadrant by reflections to obtain a candidate $(\bar u,\bar \bb)\in\mathcal{A}^{per}(Q_L)$. We
do this iteratively in the horizontal directions $i=1,\dots,d-1$:
$\bb\cdot \be_i$ is reflected evenly across $x_i=0$ in order to avoid a jump, $u$ and 
all the other components of $\bb$ are reflected oddly.
 This way, we obtain a new configuration $(\bar u,\bar \bb)$ on $Q_L$,
 the energy of which exceeds the one
of $(u,\bb)$ by at most the additional interfacial energy $2(d-1)L^{d-2}$. (We refer again to Sections 3.6 to 3.8 in \cite{AmFuPa_book} for the results that allow us to cut and paste BV functions.) Extending $(\bar u,\bar \bb)$ periodically in the horizontal directions adds interfacial energy by at most the same amount.
\end{proof}
%
\begin{proof}[Proof of Lemma \ref{itstep}] We consider a minimizer $(\bb,u)$ in $\mathcal{A}^{per}(Q_L)$. We suppose that for some $\delta\in(0,+\infty)$ to be chosen later, there holds $F(\shift, l) \leq \delta $,   for some $l$ and $\shift$ with $L\geq l \gg 1$ and $\shift \in[\,-1/2,1/2\,]$. 
We choose a smaller scale $\rho$ such that
\begin{align*}
\rho \defeq \mathrm{argmin} \left \{ \int_{\Gamma_r} \vert \bb-\shift \be_d \vert ^2 \ \middle \vert\  r \in \big [\,\textstyle{\frac{3}{4}}l
, l\,  \big]\right \}.
\end{align*}
By Fubini's theorem, this choice of $\rho$ implies that
\begin{equation}\label{eq:rhofluxestim}
	\dfrac{1}{\rho^{d-1}} \int_{\Gamma_\rho} \vert \bb-\shift \be_d \vert ^2 \lesssim \dfrac{1}{l^d}\int_{Q_l}\vert \bb-\shift \be_d \vert ^2 \, \overset{(\ref{eq:def-F})}{=}\,  F(\shift,l).
\end{equation}
Let $v_0$ be the solution of the over-relaxed problem \eqref{eq:overrelaxed} in $Q_{\rho}$ with flux boundary data $(\bb-\shift \be_d)\cdot \nu$ on $\Gamma_\rho$.
We note that the Dirichlet energy of $-\nabla v_0$ is less than the squared $\mathrm{L}^2$-norm of $\bb-\shift \be_d$, as $-\nabla v_0$ is obtained by relaxation.
 Extend $v_0$ harmonically to negative values of $x_d$ by odd reflection (this is possible because $v_0=0$ on $\underline{Q_{\rho}}$). We now shift the problem by $(\tilde{\shift}-\shift)\be_d$, where 
\begin{equation}
  \label{eq:tildeshift}
  \tilde{\shift}\defeq \shift  + \partial_d v_0(0).
\end{equation}
By the fact that $\nabla v_0(0)= \partial_d v_0 (0)\be_d$ and sub-harmonicity of $\vert \nabla v_0\vert^2$, we have
\begin{equation}\label{eq:shiftdiffestim}
( \tilde{\shift}-\shift )^2 \overset{\eqref{eq:tildeshift}}{=} |\nabla v_0(0)|^2\lesssim \dfrac{1}{\rho^d} \int_{Q_{\rho}} \vert \nabla v_0 \vert^2 \leq \dfrac{1}{\rho^d} \int_{Q_{\rho}} \vert \bb-\shift \be_d \vert^2 \overset{\eqref{eq:def-F}}{\lesssim} F(\shift, l).
\end{equation}
For $\theta\in (0,1/2\,]$ to be chosen later, by the triangle inequality we obtain
\begin{align}
\int_{\underline{Q_{\theta l}}} \vert \nabla u \vert + \int_{Q_{\theta l}} \Frac{1}{2}\vert \bb-\tilde{\shift}\be_d \vert^2 \hspace{-2cm}&\nonumber
\\
&\leq  \int_{\underline{Q_{\theta l}}} \vert \nabla u \vert + \int_{Q_{\theta l}} \vert \bb-\shift \be_d +\nabla v_0\vert^2  + \int_{Q_{\theta l}} \vert \nabla v_0-(\tilde{\shift}-\shift) \be_d \vert^2. \label{decomp}
\end{align}

\medskip

We will treat the sum of the two first terms and the third term separately. For the third term, the mean value property (Theorem~2.1 in \cite{GilbargTrudinger}) yields 
\begin{equation}\nonumber
 \dfrac{1}{(\theta l)^{d}}\int_{Q_{\theta l}} \vert \nabla v_0-(\tilde{\shift}-\shift) \be_d \vert^2 
\overset{\eqref{eq:tildeshift}}{=} \dfrac{1}{(\theta l)^{d}}\int_{Q_{\theta l}} \vert \nabla v_0- \nabla v_0(0)\vert^2
\lesssim (\theta l)^2 \sup_{Q_{\theta l}} \vert \nabla^2 v_0 \vert ^2.
\end{equation}
By inner regularity of harmonic functions (for instance Theorem~2.10 in \cite{GilbargTrudinger}, followed by the mean value property),
recalling that $\theta \in (0,1/2\,]$ we thus get
\begin{equation}\label{estim1}
\dfrac{1}{(\theta l)^{d}}\int_{Q_{\theta l}} \vert \nabla v_0-(\tilde{\shift}-\shift) \be_d \vert^2 
\lesssim (\theta l)^2\dfrac{1}{ \rho^{d+2}} \int_{Q_{\rho}} \vert \nabla v_0 \vert ^2
 \overset{\eqref{eq:shiftdiffestim}}{\lesssim} \theta^2 F(\shift,l).
\end{equation}

\medskip

We now compare the first two terms on the right hand side of inequality \eqref{decomp} with the energy of the relaxed problem, using Lemma \ref{nonconv}:
Define $\overbar{\bb}$ as the solution to the relaxed problem \eqref{eq:relaxed} on $Q_\rho$ with flux boundary values $\bb\cdot \nu$ across $\Gamma_{\rho}$. By Lemma \ref{nonconv}, one can find a candidate $(\tilde{u},\tilde{\bb})$ in $\mathcal{A}^{\bb\cdot \nu}(Q_{\rho})$  such that
\begin{equation} \nonumber
  \int_{\underline{Q_{\rho}}} \vert \nabla \tilde{u}\vert + \int_{Q_{\rho}} \Frac{1}{2}\vert \tilde{\bb}-\overbar{\bb} \vert^2  \lesssim \rho^{d-1}.
\end{equation}
Thus, by the triangle inequality, we have
\begin{equation}\label{eq:estimubtilde}
 \int_{\underline{Q_{\rho}}} \vert \nabla \tilde{u}\vert + \int_{Q_{\rho}}   \vert\tilde{\bb} - \shift \be_d + \nabla v_0  \vert^2  
 \lesssim\int_{Q_{\rho}}  \vert\overline{\bb} - \shift \be_d + \nabla v_0  \vert^2 + \rho^{d-1}.
\end{equation}
Furthermore, we note that $(u,\bb)$ is almost a minimizer in $\mathcal{A}^{\bb\cdot \nu}(Q_{\rho})$ (up to the interfacial energy concentrated in $\partial (\underline{Q_\rho})$), so that we have 
\[ 
     E(u,\bb,Q_\rho) \leq E(\tilde{u},\tilde{\bb},Q_\rho) + C\rho^{d-2}.
\]  
Applying Lemma \ref{orthogonality_lemma} to substract $-\nabla v_0 + \shift \be_d$ from the fields, this turns into
\[
     \int_{\underline{Q_{\rho}}} \vert \nabla u\vert + \int_{Q_{\rho}}  \Frac{1}{2} \vert \bb - \shift \be_d + \nabla v_0  \vert^2  \leq 
 \int_{\underline{Q_{\rho}}} \vert \nabla \tilde{u}\vert + \int_{Q_{\rho}}  \Frac{1}{2} \vert\tilde{\bb} - \shift \be_d + \nabla v_0  \vert^2  +C\rho^{d-2}.
\]
Combining the last estimate with \eqref{eq:estimubtilde} yields
\begin{equation}\label{eq:bvsbbar}
 \int_{\underline{Q_{\rho}}} \vert \nabla u\vert + \int_{Q_{\rho}}  \vert \bb - \shift \be_d + \nabla v_0  \vert^2  
\lesssim \int_{Q_{\rho}} \vert\overline{\bb} - \shift \be_d + \nabla v_0  \vert^2 + \rho^{d-1}.
\end{equation}

\medskip

We now crucially use Lemma \ref{shiftnonlin} to estimate the minimum energy of the relaxed problem in $Q_\rho$.
By the choice of $\rho$, we have
\begin{equation*}
\int_{\Gamma_\rho} \left ( ( \bb -\shift \be_d )\cdot \nu\right )^2  
\overset{\eqref{eq:rhofluxestim}}{\lesssim} l^{d-1} F(\shift,l)\leq \delta l^{d-1}.
\end{equation*}
Thus for sufficiently small $\delta$,  by Lemma \ref{shiftnonlin},
\begin{equation}\nonumber
\int_{Q_{\rho}} \vert \overbar{\bb}-\shift \be_d +\nabla v_0\vert^2
\lesssim \left ( \int_{\Gamma_{\rho}}\left ( (\overbar{\bb} -\shift \be_d) \cdot \nu\right ) ^2 \right )^{\frac{d}{d-1}}
\overset{\eqref{eq:rhofluxestim}}{\lesssim}  l^d F(\shift,l)^{\frac{d}{d-1}},
\end{equation}
 so that, restricting to the cube $Q_{\theta l}\subset Q_\rho$, we get
\begin{align}\label{estim2}
 \int_{\underline{Q_{\theta l}}} \vert \nabla u \vert + \int_{Q_{\theta l}} \vert \bb-\shift \be_d +\nabla v_0\vert^2 
&\leq  \int_{\underline{Q_{\rho}}} \vert \nabla u \vert +\int_{Q_{\rho}} \vert \bb-\shift \be_d +\nabla v_0\vert^2  \nonumber\\
&\hspace{-2cm}\overset{\eqref{eq:bvsbbar}}{\lesssim} \int_{Q_{\rho}} \vert \overbar{\bb}-\shift \be_d + \nabla v_0 \vert^2 +  l^{d-1} \lesssim  l^d F(\shift,l)^{\frac{d}{d-1}} + l^{d-1}.
\end{align}

\medskip

We may now conclude: inserting the estimates \eqref{estim1} and \eqref{estim2} into \eqref{decomp},  we obtain
\begin{equation*}
F(\tilde{\shift}, \theta l) \leq C\left ( \theta^2 F(\shift, l) + \dfrac{1}{\theta^{d}}F(\shift,l)^{\frac{d}{d-1}}+ \dfrac{1}{\theta ^{d}l}\right ).
\end{equation*}
We first choose $\theta \ll 1$ such that
\begin{equation*}
C \theta^2 F(\shift,l) \leq \frac{1}{2}\theta F(\shift,l)
\end{equation*}
and then $\delta \ll 1$ such that
\begin{equation*}
\dfrac{C}{\theta ^{d}} F(\shift,l)^{\frac{d}{d-1}} \leq  \frac{1}{2} \theta F(\shift,l),
\end{equation*} 
which, combined with \eqref{eq:shiftdiffestim} yields \eqref{eq:itstep}.
\end{proof}
\begin{proof}[Proof of Lemma \ref{iter}]
Let $\delta\in(0,+\infty)$ be arbitrary, to be chosen later. Consider a  minimizer $(u,\bb)$ in $\Aper$. Fix $\theta\in (0,1/2\,]$ according to Lemma \ref{itstep}. Without loss of generality, we ask that $l = \theta^N L$ for some positive integer $N$.
By induction over $n$, we will prove that there exists a sequence of shifts $\{\shift_n\}_{n=1}^N$ in $[\,-1/2,1/2\,]$, such that for $n=1,\dots, N$, there holds
\begin{equation}\label{F_ind_step}
F(\shift_n, \theta^{n}L) \leq  \theta^{n}  F(0,L)  + \Frac{C\theta^{n+1}}{L} \sum_{m=1}^{n} \theta^{-2m}
\ \  \text{and} \ \ 
 |\shift_n| \leq C\left ( F(0,L)^{1/2}  +\Frac{1}{\sqrt{\theta^{n} L}}\right),
\end{equation}
with  $C\in (0,+\infty)$ to be chosen later.
In particular we remark that for $n\leq N$, the first part of \eqref{F_ind_step} implies 
\begin{eqnarray}
F(\shift_n, \theta^{n}L) &\leq& \theta^{n}  F(0,L) + \Frac{C\theta^{n+1}}{L} \dfrac{\theta^{-2(n+1)}-1 }{\theta^{-2}-1}\nonumber \\
&=&  \theta^{n}  F(0,L) + \Frac{C\theta^{n+1}}{L} \Frac{\theta^{-2n}-\theta^2}{1-\theta^{2}}\nonumber\\
&\leq&  \theta^{n}  F(0,L) + \Frac{C}{1-\theta^{2}} \Frac{1}{\theta^{n-1}L}. \label{eq:Fcontrol}
\end{eqnarray}

\medskip

Letting $\shift_0 \defeq 0$, the inequalities in \eqref{F_ind_step} hold trivially for $n=0$. We now pass from $n$ to $n+1$.  Suppose that \eqref{F_ind_step}  and \eqref{eq:Fcontrol} hold at all steps from $0$ to $n$ with  $0\leq n \leq N-1$. We note that \eqref{F_ind_step} and \eqref{eq:Fcontrol} imply that the assumption \eqref{eq:iteration-condition} from Lemma~\ref{itstep} is satisfied provided $\delta$ in the present proof is chosen sufficiently small. Denoting by $C_0$ the implicit constant in \eqref{eq:itstep}, there exists $\shift_{n+1}$ such that
\begin{equation}\nonumber
F(\shift_{n+1}, \theta^{n+1}L) \leq \theta F(\shift_n,\theta^{n}L) + \Frac{C_0}{\theta^{n}L}
\!\overset{\eqref{F_ind_step}\text{ at step }n}{\leq} \theta^{n+1}  F(0,L) +\Frac{C\theta^{n+1} }{ L} \sum_{m=1}^{n} \theta^{-2m} + \Frac{C_0}{\theta^{n}L}.
\end{equation}
This is  consistent with \eqref{F_ind_step} provided we choose $C$ such that $C\geq C_0$. Furthermore, by \eqref{eq:itstep}, we have
\begin{equation}\label{eq:shiftdiff}
 |\shift_{n+1}-\shift_{n}| \leq C_0 F(\shift_n,\theta^{n}L)^{\frac{1}{2}}.
\end{equation}
We thus control $\shift_{n+1}$ by 
\begin{eqnarray*}
 |\shift_{n+1}| &\leq& \sum_{m=0}^n \vert \shift_{m+1} -\shift_m\vert \\
&\overset{\eqref{eq:shiftdiff}\text{ at every step}}{\leq}& C_0 \sum_{m=0}^n  F(\shift_m,\theta^{m}L)^{\frac{1}{2}}\\
&\overset{\eqref{eq:Fcontrol}\text{ at every step}}{\leq}& C_0  \left( \sum_{m=0}^n \left (\theta^{m}  F(0,L)\right )^{\frac{1}{2}} +  \sqrt{ \Frac{C}{1-\theta^{2}}}\sum_{m=1}^n  \Frac{1}{\sqrt{\theta^{m-1}L}}\right)\\
&\leq&  C_0  \dfrac{1}{1-\sqrt{\theta}} F(0,L)^{\frac{1}{2}} + C_0\sqrt{\dfrac{C}{1-\theta^{2}}}  \dfrac{1}{\sqrt{\theta}^{\ -1}-1}\  \dfrac{1}{ \sqrt{\theta^n L}}.
\end{eqnarray*}
This is consistent with \eqref{F_ind_step} at step $n+1$ and implies in particular $\shift_{n+1}\in [\,-1/2,1/2\,]$ provided we choose $C$ such that
\[ 
     C \geq \max\left \{C_0  \dfrac{1}{1-\sqrt{\theta}} , C_0\sqrt{\dfrac{C}{1-\theta^{2}}}  \dfrac{1}{\sqrt{\theta}^{\ -1}-1} \right \}.
\]

\medskip

Using \eqref{eq:Fcontrol} for $n=N$, we obtain
\begin{equation}\label{eq:Fcontrol2}
   F(\shift_N,l) \lesssim F(0,L) + \dfrac{1}{l}.
\end{equation}
By the triangle inequality, using the fact that $F$ is a volume average, we may remove the shift:
\begin{equation*}
 F(0,l) \lesssim F(\shift_N,l) + \shift_N^2 \overset{\eqref{eq:Fcontrol2},\eqref{F_ind_step}}{\lesssim} F(0,L) + \dfrac{1}{l}.
\end{equation*}
\end{proof}
\begin{proof}[Proof of Lemma \ref{shiftnonlin}]
We first note that $\bb$ being a minimizer of the relaxed problem \eqref{eq:relaxed} implies that for $\shift\in \R$,
 $\bb-\shift \be_d$ is a minimizer of the 
\textit{shifted relaxed problem}: 
\begin{align}\label{eq:shiftrelaxed}
 \min \left \{
\int_{\bulk} \Frac{1}{2} \vert \tilde\bb \vert^2 \ 
\middle \vert \begin{array}{cccc}
\Div \tilde\bb &=&0  &\text{ in  } Q_L,\\
\tilde\bb \cdot \nu &\in&[\,-1+\shift, 1+\shift\,] &\text{ on  } \underline{Q_L},\\
\tilde\bb\cdot \nu &=&(\bb -\shift\be_d)\cdot\nu  &\text{ on  } \Gamma_L.
\end{array}
\right \}
\end{align}
Indeed, if $\tilde{\bb}$ is a candidate for the problem \eqref{eq:shiftrelaxed}, writing $\be_d = \nabla x_d$, we obtain by integration by parts that $\int_{Q_L} (\bb -\shift \be_d -\tilde{\bb})\cdot \shift \be_d = 0$ so that
\[
      \int_{Q_L} \bb\cdot(\shift \be_d)-\dfrac{1}{2} \vert \shift \be_d\vert^2= \int_{Q_L} \tilde{\bb}\cdot(\shift \be_d)+\dfrac{1}{2} \vert \shift \be_d\vert^2
\]
to the effect of
\[
    \int_{Q_L} \dfrac{1}{2}\vert \tilde{\bb}\vert^2 - \int_{Q_L} \dfrac{1}{2} \vert \bb -\shift \be_d \vert^2  = \int_{Q_L} \dfrac{1}{2}\vert \tilde{\bb}+\shift \be_d\vert^2 - \int_{Q_L} \dfrac{1}{2} \vert \bb \vert^2.
\]
The last term is non-negative as $\tilde{\bb}+\shift\be_d$ is a candidate for the relaxed problem \eqref{eq:relaxed}, of which $\bb$ is a minimizer.
Thus, recalling that $\shift \in [\,-1/2,1/2\,]$ and considering a minimizer $\tilde{\bb}$ of the more constrained problem 
\begin{align*}
\min \left \{
\int_{\bulk} \Frac{1}{2} \vert \tilde\bb \vert^2 \ 
\middle \vert \begin{array}{cccc}
\Div \tilde\bb &=& 0  &\text{  in  } \bulk,\\
 \tilde\bb \cdot \nu &\in& [\,-1/2, -1/2\,]  &\text{ on  } \infbnd,\\
\tilde\bb\cdot \nu &=& (\bb -\shift\be_d)\cdot\nu &\text{  on  } \Gamma_L,
\end{array}
\right \}
\end{align*}
 there holds
\begin{equation}\label{eq:shiftvshalf}
     \int_{Q_L} \vert \bb - \shift \, \be_d \vert ^2 \leq \int_{Q_L} \vert \tilde{ \bb} \vert^2.
\end{equation}

\medskip

Consider also the solution $-\nabla v_0$ to the over-relaxed problem \eqref{eq:overrelaxed} in $Q_L$ with flux boundary data given by $\tilde{\bb}\cdot \nu$ on $\Gamma_L$. For $\tilde{\bb}$ and $-\nabla v_0$ it is clear that Lemma \ref{orthogonality_lemma}, but also version of Lemma \ref{nonlinestimate} hold (replacing the constraint $\bb\cdot \nu \in[\,-1,1\,]$ by $\bb\cdot \nu \in[\,-1/2,1/2\,]$  only affects the implicit constant), so that 
\[
     \int_{Q_L} \vert \tilde{\bb} \vert^2 -\int_{Q_L}  \vert \nabla v_0 \vert^2 = \int_{Q_L} \vert \tilde{\bb} + \nabla v_0 \vert^2 \lesssim \left(\int_{\Gamma_L} \big ((\bb - \shift \, \be_d )\cdot\nu \big) ^2\right )^{\frac{d}{d-1}}.
\]
Hence, using \eqref{eq:shiftvshalf} and once more Lemma \ref{orthogonality_lemma}, we get as desired
\begin{align*}
 \int_{Q_l} \vert \bb - \shift \, \be_d + \nabla v_0\vert ^2 
 \lesssim \left (\int_{\Gamma_l} \big(( \bb - \shift \, \be_d )\cdot\nu \big) ^2\right )^{\frac{d}{d-1}}.
\end{align*}
\end{proof}
\begin{proof}[Proof of Lemma \ref{nonconv}]
Let $\overline{\bb}$ be a minimizer for the relaxed problem \eqref{eq:relaxed} in $Q_L$ with flux boundary condition $g$.  Divide $Q_L$ into $\mathcal{O}(L^{d-1})$ small cubes $Q_i$ with side length $l\in [\,1,2\,]$ sitting on the bottom
and a large box above of height $L-l$ and sides of length $L$. Divide each $\underline{Q_i}$ into two boxes $\underline{Q_i^+}$ and $\underline{Q_i^-}$, such that $\text{area}(\underline{Q_i}^+ ) - \text{area}(\underline{Q_i}^- ) =\int_{\underline{Q_i}} \overline{\bb}\cdot \nu \in [\,-l^{d-1},l^{d-1}\,]$. Then define $u$ on $\underline{Q_i}$ so that
\begin{equation}\label{eq:chargecompat}
 u = \begin{cases} 
\ \, \,1 &\text{ on }  \underline{Q_i^+}, \\
\, -1 &\text{ on }  \underline{Q_i^-}\, ,
\end{cases}
\quad \text{and} \quad
\int_{\underline{Q_i}} u = \int_{\underline{Q_i}} \overline{\bb}\cdot\nu.
\end{equation}
 The interfacial energy in $Q_i$ is given by $\int_{\underline{Q_i}} |\nabla u| = 2 l^{d-2}$ (or zero if $\int_{\underline{Q_i}} \overline{\bb}\cdot \nu = \pm l^{d-1}$).  Taking into account the interface between the cubes $\underline{Q_i}$, the global interfacial energy in $\underline{Q_L}$ thus satisfies
\begin{align*}   
   \int_{\underline{Q_L}} \vert \nabla u \vert \leq  \Big (\dfrac{L}{l}\Big )^{d-1} 2l^{d-2} + 2d \dfrac{L}{l}  \ \ 2 L^{d-2} \lesssim L^{d-1}.
\end{align*}

It remains to modify the field $\overline{\bb}$ so that it becomes compatible with $u$. In each $Q_i$, we solve the following Neumann problem (which is solvable by \eqref{eq:chargecompat})
\begin{align*}
 \begin{cases}
  -\Delta v_i &=0  \qquad\qquad\ \text{ in } Q_i, \\
 -\nabla  v_i \cdot \nu &= 0 \qquad\qquad\ \text{ on } \partial Q_i\setminus \underline{Q_i}, \\
 -\nabla v_i \cdot \nu &=  u - \overline{\bb}\cdot \nu \quad\text{ on } \underline{Q_i}.
 \end{cases}
\end{align*}
As $u= \pm 1$ and $\vert \overline{\bb}\cdot\nu \vert \leq 1$ on $\underline{Q_i}$, we may apply Lemma \ref{harm_building_block} and there holds $ \int_{Q_i} \vert \nabla v_i\vert^2\lesssim 1$. Define a field $\bb$ by
\begin{equation*}
 \bb := \begin{cases}
  \overline{\bb} -\nabla v_i &\text{ on } Q_i,\\
   \overline{\bb}  &\text{ in } Q_L\backslash \bigcup_i Q_i.
   \end{cases} 
 \end{equation*}
By construction, $(u,\bb)\in \mathcal{A}^{g}(Q_L)$ and there holds
\begin{align*}
    \int_{Q_L}  \vert \bb -\overbar{\bb} \vert ^2=  \sum_i \int_{Q_i} \vert \nabla v_i\vert ^2 \lesssim L^{d-1}.
  \end{align*}
\end{proof}
\begin{proof}[Proof of Lemma \ref{nonlinestimate}]
Clearly, $\bb$ and $v_0$ satisfy the assumptions of Lemma \ref{orthogonality_lemma},
so we have 
\[
\int_{Q_L} \Frac{1}{2} \vert \bb + \nabla v_0 \vert^2 
= \int_{Q_L} \Frac{1}{2} \vert \bb \vert^2 -\int_{Q_L} \Frac{1}{2} \vert \nabla v_0 \vert^2 = E_{rel}^g(Q_L)-E_0^g(Q_L).
\]
Let $v$ be the minimizer of the dual to the relaxed problem on $Q_L$, see \eqref{eq:dualtorel}, and
let $w \defeq v-v_0$. There holds
\begin{align*}
E_{rel}^g(Q_L)-E^g_0(Q_L) &= -\int_{Q_L} \Frac{1}{2} \vert \nabla v \vert^2  - \int_{\underline{Q_L}} \vert v \vert + \int_{\Gamma_L} vg -\int_{Q_L} \Frac{1}{2} \vert \nabla v_0 \vert^2\\ 
&= -\int_{Q_L} \Frac{1}{2} \vert \nabla v - \nabla v_0 \vert^2 - \int_{Q_L} \nabla v \cdot \nabla v_0 - \int_{\underline{Q_L}} \vert v \vert +\int_{\Gamma_L}vg.
\end{align*}
We now integrate by parts the mixed term and appealing to \eqref{eq:overrelaxed}, we get
\begin{align*}
E_{rel}^g(Q_L)-E^g_0(Q_L) = -\int_{Q_L} \Frac{1}{2} \vert \nabla w \vert^2 + \int_{\underline{Q_L}} \left (-\vert w \vert + w\, \partial_d v_0\right ).
\end{align*}
Next, denoting $V \defeq (\int_{\underline{Q_L}} ( \partial_d v_0 )^2)^{1/2}$, by H\" older's inequality, Lemma \ref{interpolation} for $\epsilon\in(0,1\,]$, and Young's inequality, we obtain
\begin{align} \label{est_nonlin}
E^g_{rel}(Q_L)-E^g_0 (Q_L)
&\leq-\int_{Q_L} \Frac{1}{2} \vert \nabla w \vert^2 - \int_{\underline{Q_L}} \vert w \vert \,  +  \left ( \int_{\underline{Q_L}} w ^2\right)^{\frac{1}{2}} V 
\nonumber \\
&\hspace{-2cm}\leq -\int_{Q_L} \Frac{1}{2} \vert \nabla w \vert^2 +  C  (\epsilon L)^{\frac{1}{2}}  \left (\int_{Q_L} \vert \nabla w \vert^2 \right )^{\frac{1}{2}}V
+ \left(\dfrac{C V}{(\eps L)^{\frac{d-1}{2}}} -1 \right )\int_{\underline{Q_L}} \vert w \vert \nonumber\\
&\leq \dfrac{1}{2}C^2\eps LV^2  +\left(\dfrac{CV}{ (\eps L)^{\frac{d-1}{2}}} -1 \right )\int_{\underline{Q_L}} \vert w \vert.
\end{align}
From Lemma \ref{Neumann} applied to $v_0$ and the assumption $\int_{\Gamma_L} g^2 \ll L^{d-1}$, which we rewrite as $\int_{\Gamma_L} g^2 \leq cL^{d-1}$ for some $c\in(0,+\infty)$, we know that 
\begin{equation}\label{eq:defV}
 V = \left  (\int_{\underline{Q_L}} (\partial_d v_0 )^2\right )^{\frac{1}{2} } \lesssim  \left (\int_{\Gamma_L} g^2\right )^{\frac{1}{2}} \lesssim c^{\frac{1}{2}} L^{\frac{d-1}{2}}.
\end{equation}
Hence, if $c$ is small enough, depending only on $d$, we may choose $\epsilon\in(0,1\,]$ such that $ (\epsilon L)^{(d-1)/2}\geq CV$, so that the second term on the right hand side of \eqref{est_nonlin} is non-positive. Using \eqref{eq:defV} in \eqref{est_nonlin}, we thus obtain as desired
\begin{align*}
E^g_{rel}(Q_L)-E^g_0(Q_L) \lesssim V^{\frac{2}{d-1}+2}  \lesssim \left (\int_{\Gamma_l} g^2\right )^{\frac{d}{d-1}}.
\end{align*}
\end{proof}
\begin{proof}[Proof of Lemma \ref{orthogonality_lemma}]
  Using the condition $\nabla \cdot \bb=0$ to integrate by parts, we get
\begin{equation*}
 \int_{Q_L} \bb \cdot \nabla v_0 =- \int_{\Gamma_L} v_0  \bb\cdot \nu = -\int_{\Gamma_L} v_0 g =  \int_{\Gamma_L} v_0  \nabla  v_0\cdot \nu
 = - \int_{Q_L} \vert \nabla v_0 \vert^2.
\end{equation*}
Hence
\begin{equation*}
 \int_{Q_L} \Frac{1}{2} \vert \bb + \nabla v_0 \vert^2 = 
  \int_{Q_L} \Frac{1}{2} \vert \bb \vert^2 + \int_{Q_L} \bb \cdot \nabla v_0 + \int_{Q_L} \Frac{1}{2} \vert \nabla v_0 \vert^2
 = \int_{Q_L} \Frac{1}{2} \vert \bb \vert^2 - \int_{Q_L} \Frac{1}{2} \vert \nabla v_0 \vert^2.
\end{equation*}\qedhere
\end{proof}
\begin{proof}[Proof of Lemma \ref{dual}]
This proof is similar to that of Lemma 3.3 in \cite{ACO09}. The idea is to replace the constraints by a linear (thus concave) maximization problem. When the constraints for $\bb$ are not met, the supremum is infinite and such candidates cannot be minimizers. The condition $\vert\int_{\Gamma_L} g\vert\leq L^{d-1}$ ensures that the class on which we minimize for the relaxed problem is not empty. This minimization problem can then be stated as
\begin{align*}
E^{g}_{rel}(Q_L) &= \inf_{\bb} \left \{ \int_{Q_L} \Frac{1}{2} \vert \bb \vert ^2 \,\middle \vert\,
\begin{array}{cccc}
\nabla \cdot \bb &=& 0 &\text{in } Q_L,\\
 \bb \cdot \nu &\in& [\,-1,1\,] &\text{on } \underline{Q_L},\\
\bb \cdot \nu &=& g &\text{on } \Gamma_L 
\end{array}
\right \}
\\ &= \inf_{\bb,u}\left \{ \int_{Q_L} \Frac{1}{2} \vert \bb \vert ^2 \,\middle \vert\,
\begin{array}{ccc}
\bb &\in& \mathrm{L}^2(Q_L,\R^d),\\
u &\in& \mathrm{L}^{2}(\underline{Q_L}, [-1,1]),
\end{array}
\begin{array}{ccc}
\nabla \cdot \bb &= 0 &\text{ in } Q_L,\\
\bb \cdot \nu &= u &\text{ on } \underline{Q_L},\\
\bb \cdot \nu &= g &\text{ on } \Gamma_L 
\end{array}
\right \}
\\ &\hspace{-1cm}= \inf_{\bb,u}\sup_{v}\left \{ \int_{Q_L} \left (\Frac{1}{2} \vert \bb \vert ^2 +\bb \cdot \nabla v \right) + \int_{\underline{Q_L}}  v u     + \int_{\Gamma_L} v g \,\middle \vert\,
\begin{array}{ccc}
\bb &\in &\mathrm{L}^2(Q_L,\R^d),\\
u &\in &\mathrm{L}^{2}(\underline{Q_L}, [-1,1]),\\
v &\in &\mathrm{H}^1(Q_L).
\end{array}
\right \}
\end{align*}
The infimum being taken for a convex functional, and the supremum for an affine one, we use a classical $min-max$ theorem (see for instance \cite{BrezisOpMaxMon}, Chapter I, Proposition 1.1) to change the order of the operations. To see that we may apply this proposition, let us denote the functional in the last line by $K((u,\bb),v)$; it is defined on the product of the spaces $E\defeq \mathrm{L}^{2}(\underline{Q_L})\times \mathrm{L}^2(Q_L,\R^d)$ and $\mathrm{H}^1(Q_L)$, equiped with the weak topology. Consider the convex subset $A\defeq E\cap\{(u,\bb), u \in[\,-1,1\,]\}$. The functional $K$ is linear and bounded in the variable $v$, thus concave and weakly continuous. It is also convex and weakly lower semi continuous in the variable $(u,\bb)$ on $A$. If $\widehat{\bb}=-\nabla \widehat{v}$ is the minimizer for the relaxed problem (the fact that the minimizers are gradients is standard, as for the over-relaxed problem \eqref{eq:overrelaxed}, see Section IX.3 of \cite{DautLionBook3}) then the set
\[
    \left \{ (u,\bb)\in A\ \middle \vert \  K((u,\bb),\widehat{v}) \leq  E^{g}_{rel} (Q_L)\right \}
\]
is not empty (take the pair $(\widehat{\bb}\cdot \be_d, \widehat{\bb})$). It is also weakly compact, by the weak lower semi continuity of $K$ in the variable in the variable $(u,\bb)$ and the condition $u\in [\,-1,1\,]$. Thus, by the aforementioned proposition, we have 
\[
     \inf_{(u,\bb)\in A} \sup_{v\in \mathrm{H}^1(Q_L)} K((u,\bb),v) = \sup_{v\in \mathrm{H}^1(Q_L)} \inf_{(u,\bb)\in A}  K((u,\bb),v).
\]

\medskip
 
Furthermore, as $\vert \bb \vert ^2 +2 \bb \cdot \nabla v= \vert\bb + \nabla v\vert^2 - \vert \nabla v \vert^2$, the infimum is reached for $\bb = -\nabla v$ in $Q_L$ and $u = -\mathrm{sgn} \, v$ on $\underline{Q_L}$, thus
\begin{align*}
E^{g}_{rel}(Q_L)&=\\
&\hspace{-1cm} \sup_{v}\inf_{\bb,u} \left \{ \int_{Q_L} \left (\Frac{1}{2} \vert \bb \vert ^2 +\bb \cdot \nabla v \right) +\int_{\underline{Q_L}}  v u + \int_{\Gamma_L} v g \,\middle \vert\,
\begin{array}{ccc}
\bb &\in& \mathrm{L}^2(Q_L,\R^d),\\
u &\in& \mathrm{L}^{2}(\underline{Q_L}, [-1,1]),\\
v &\in& \mathrm{H}^1(Q_L)
\end{array}
\right \}
\\
&= \sup_{v}\left \{ -\int_{Q_L}  \Frac{1}{2} \vert \nabla v \vert^2 - \int_{\underline{Q_L}}  \vert v\vert  + \int_{\Gamma_L} v g \,\middle\vert\, v \in \mathrm{H}^1(Q_L) 
\right \}
\\
&= - \inf_v\left\{ \int_{Q_L} \Frac{1}{2} \vert \nabla v \vert ^2 + \int_{\infbnd} \vert v \vert - \int_{\Gamma_L} vg \,\middle \vert\, v \in \mathrm{H}^1(Q_L) \right \}.
\end{align*}
\end{proof}
\begin{proof}[Proof of Lemma \ref{interpolation}]
By scaling and translation invariance, it is sufficient to consider the cube $(0,1)^d$. Let $w$ be a smooth function on this cube $(0,1)^d$, clearly
\begin{align*}
\vert w (x',x_d) \vert \leq \int_0^{x_d} \vert \nabla w (x',t)\vert \dd t + \vert w(x',0) \vert.
\end{align*}
Integrating over $x_d$ between $0$ and $1$, we get
\begin{align*}
\int_0^1 \vert w (x',x_d) \vert \dd x_d \leq
\int_0^1 \vert \nabla w (x',t)\vert \dd t + \vert w(x',0) \vert. 
\end{align*}
Integrating over $x'\in(0,1)^{d-1}$ and using 
H\" older's inequality on the first term on the right hand side now yields
\begin{align*} 
\int_{(0,1)^d} \vert w \vert \leq \left ( \int_{(0,1)^d} \vert \nabla w\vert ^2 \right )^{\frac{1}{2}} + \int_{(0,1)^{d-1}\times \{0\}} \vert w \vert.
\end{align*}
Next, from Lemma 3.2 in \cite{ACO09}, with $\eps \defeq 1$ we know that
\begin{align*}
\int_{(0,1)^{d-1}\times \{0\}} w^2 \lesssim \int_{(0,1)^d} \vert \nabla w \vert^2  + \left( \int_{(0,1)^d} \vert w \vert\right)^2,
\end{align*}
which, combined with the square of the previous estimate yields
\begin{equation}\label{eq:scale1}
\int_{(0,1)^{d-1}\times \{0\}}  w ^2  \lesssim \int_{(0,1)^d} \vert \nabla w \vert^2 + \left(\int_{(0,1)^{d-1}\times \{0\}} \vert w \vert\right)^2 .
\end{equation}
By approximation and trace estimate, \eqref{eq:scale1} remains true for any $w\in \mathrm{H}^1(Q_L)$.

\medskip

Now given a function $w$ on $(0,1)^d$ and $\eps \in (0,1\,]$, with $\epsilon =1/N$ for some positive integer $N$, we divide the lower side of the cube into $N^{d-1}$ boxes of side length $\eps$. 
In each of these boxes (letting $x^i$ correspond to the centers of their bases) inequality \eqref{eq:scale1} is applied to the rescaled potential $w(x^i+\eps x)$. Using the fact that the sum of squares of non-negative numbers is smaller than or equal to the square of their sum, we obtain
\begin{align*}
\int_{(0,1)^{d-1}\times \{0\}}  w ^2 \lesssim \eps \int_{(0,1)^{d-1}\times (0,\epsilon)} \vert \nabla w \vert^2 + \Frac{1}{\eps^{d-1}}\left (\int_{(0,1)^{d-1}\times \{0\}} \vert w \vert \right )^2.
\end{align*}
Extending the first integral on the right hand side to the whole of $(0,1)^d$ and taking the square root, we get the desired estimate. It is easily seen that up to a change of constant, this estimate holds for any $\epsilon \in (0,1\,]$ and not just for inverses of positive integers.
\end{proof}
\begin{proof}[Proof of Lemma \ref{Neumann}]
A similar result is established by a somewhat different
argument in \cite[Remark 5.5]{MiuraOttoarXiv}. 
We start with a couple of reductions. By scaling invariance, it is enough to prove the estimate with $(0,L)^{d}$ replaced by $(0,\pi)^d$. Decomposing the harmonic function $v$ into $2^d-1$ parts, we may restrict ourselves to the situation where $v$ has zero boundary flux on all but two of the $2^d$ faces of the cube $(0,\pi)^d$, which we call {\it input face} and {\it output face}, and zero Dirichlet boundary condition on the output face (the one on which we want to estimate the $\mathrm{L}^2$-norm of the normal derivative).
We suppose $\mathrm{L}^2$-control of the normal derivative on the input face. We distinguish two cases: The easier case in which the input face is {\it opposite} to the output face
and the harder case in which they are
{\it adjacent}.
By cubic symmetry, we may in both cases take the input face
to be the top face $\{x_d=\pi\}$.
In the easy case, we have
\begin{equation} \label{o1}
v(x',0)=0 \text{ for } x'\!\in (0,\pi)^{d-1}\   \text{and}\ \ 
\partial_i v(x)=0  \text{ if }  x_i\!\in\!\{0,\pi\}
\text{ for some } i= 1,\dots,d-\!1. 
\end{equation}
We then seek the following estimate on the bottom face $\{x_d=0\}$:
\begin{equation}\label{o2}
\int_{(0,\pi)^{d-1}}(\partial_dv)^2(x_d=0) dx_1\cdots dx_{d-1}\lesssim
\int_{(0,\pi)^{d-1}}(\partial_dv)^2(x_d=\pi)dx_1\cdots dx_{d-1}.
\end{equation}
In the hard case, we can suppose that $v$ vanishes on the face $\{x_1=0\}$, we thus have
\begin{equation}\label{o1bis}
v(0,x_2,\dots,x_d) = 0\ \ \text{and}\ \ 
\partial_i v(x)=0  \text{ if }\begin{cases}
x_i\in\{0,\pi\} \text{ for some }i=2,\dots,d-1,\\ 
\text{or } x_1= \pi ,\\
\text{or } x_d= 0. \end{cases}
\end{equation}
We then seek the estimate
\begin{equation}\label{o3}
\int_{(0,\pi)^{d-1}}(\partial_1v)^2(x_1=0) dx_2\cdots dx_{d}\lesssim
\int_{(0,\pi)^{d-1}}(\partial_dv)^2(x_d=\pi)dx_1\cdots dx_{d-1}.
\end{equation}

\medskip

We will show both with help of Fourier series. In the easy case, in view of (\ref{o1}), we may develop $v$ in Fourier 
series in the horizontal variables
$x'=(x_1,\dots,x_{d-1})$;
because of the harmonicity of $v$ and of the fact that $v(\{x_d=0\}) = 0$, these
take on the form
\begin{equation}\nonumber
v=\sum_{n'}a_{n'}\cos(n_1x_1)\cdots\cos(n_{d-1}x_{d-1})\sinh(|n'|x_d),
\end{equation}
where the sum runs over all $n'\in\mathbb{N}^{d-1}$ and
$|n'|^2=n_1^2+\cdots+n_{d-1}^2$.
Because of
\begin{equation}\nonumber
\partial_dv=\sum_{n'}|n'|a_{n'}\cos(n_1x_1)\cdots\cos(n_{d-1}x_{d-1})\cosh(|n'|x_d)
\end{equation}
and Parseval's identity we may re-express (\ref{o2}) as
\begin{equation}\nonumber
\sum_{n'}|n'|^2a_{n'}^2 \lesssim \sum_{n'}|n'|^2\cosh^2(\pi|n'|)a_{n'}^2.
\end{equation}
As $\cosh \geq 1$, this holds and the easy case (\ref{o2}) follows.

\medskip

In the hard case, by \eqref{o1bis}, reflecting $v$ evenly across the plane $\{x_1=\pi\}$, we can write
\begin{equation}\nonumber
v=\sum_{n'}a_{n'}\sin((n_1+\textstyle{\frac{1}{2}})x_1)\cos(n_2x_2)\cdots\cos(n_{d-1}x_{d-1})\cosh(\alpha(n') x_d),
\end{equation}
where $\alpha(n')>0$ satisfies $ \alpha(n')^2= (n_1+\frac{1}{2})^2+n_2^2+\cdots+n_{d-1}^2$, and the sum runs over all $n'=(n_1,\dots,n_{d-1})\in\mathbb{N}^{d-1}$.
By Parseval's identity applied to the variables $x_2,\dots, x_{d-1}$ over $(0,\pi)$ and to the variable $x_1$ over $(0,2\pi)$ we may re-express the r.\ h.\ s. of (\ref{o3}) as
\begin{align}
\int_{(0,\pi)^{d-1}}(\partial_dv)^2(x_d=\pi)dx_1\cdots dx_{d-1}
&\sim \sum_{n'}\alpha(n')^2a_{n'}^2 \sinh^2(\pi \alpha(n'))\nonumber\\
&\sim \sum_{n'}\alpha(n')^2a_{n'}^2 \exp(2\pi \alpha(n')),\label{o5}
\end{align}
where the last comparison follows from the fact that $\alpha(n')\geq \frac{1}{2}$.
For the l.\ h.\ s. of \eqref{o3}, Parseval's identity applied to the variables $x_2,\dots, x_{d-1}$ yields
\begin{equation}\nonumber
\int_{(0,\pi)^{d-1}}\hspace{-.2cm}(\partial_1v)^2(x_1=0)dx_2\cdots dx_{d}\sim
\hspace{-.3cm}\sum_{n_2,\dots,n_{d-1}}
\int_0^\pi\hspace{-.1cm}\Big (\sum_{n_1}(n_1+\textstyle{\frac{1}{2}})a_{n'}\cosh(\alpha(n')x_d)\Big)^2 dx_d.
\end{equation}
We consider the individual terms on the r.\ h.\ s. and start by expanding the square
\begin{eqnarray*}
\int_0^\pi\Big(\sum_{n_1}(n_1+{\textstyle \frac{1}{2}}) a_{n'}\cosh(\alpha(n') x_d)\Big)^2dx_d&&\\
&&\hspace{-5cm}=\sum_{n_1}\sum_{m_1}(n_1+{\textstyle\frac{1}{2}})a_{n'}(m_1+{\textstyle \frac{1}{2}})a_{m'}\int_0^\pi\cosh(\alpha(n')x_d)\cosh(\alpha(m')x_d)dx_d,
\end{eqnarray*}
where, with a slight abuse of notation, $m':=(m_1,n_2,\dots,n_{d-1})$. 
Because of the elementary inequality
\begin{align*}
\int_0^\pi\cosh(\alpha(n')x_d)\cosh(\alpha(m')x_d)dx_d&\leq\int_0^\pi\exp(\alpha(n') x_d+\alpha(m') x_d)dx_d\\
&\leq\frac{1}{\alpha(n')+\alpha(m')}\exp(\pi(\alpha(n')+\alpha(m')))
\end{align*}
 and thus
\begin{eqnarray}
\int_{(0,\pi)^{d-1}}(\partial_1v)^2(x_1=0)dx_2\cdots dx_d \hspace{-5cm}&\nonumber\\
&\lesssim&\sum_{n_2,\dots,n_{d-1}}
\sum_{n_1}\sum_{m_1}(n_1+\textstyle{\frac{1}{2}})|a_{n'}|(m_1+\textstyle{\frac{1}{2}})|a_{m'}|\frac{1}{\alpha(n')+\alpha(m')}\exp(\pi(\alpha(n')+\alpha(m')))\nonumber\\
&\le&\sum_{n_2,\dots,n_{d-1}}
\sum_{n_1}\sum_{m_1}\frac{1}{n_1+m_1+1} \ \alpha(n')|a_{n'}|\exp(\pi \alpha(n'))\ \alpha(m')|a_{m'}|\exp(\pi\alpha(m')),\nonumber
\end{eqnarray}
as $\alpha(n') \geq n_1+ \frac{1}{2}$. 
A glance at (\ref{o5}) now shows that (\ref{o3}) reduces to the following statement
on non-negative
sequences $\{b_{n'}:=\alpha(n')\exp(\pi \alpha(n'))|a_{n'}|\}_{n'}$
\begin{equation}\nonumber
\sum_{n_2,\dots,n_{d-1}}
\sum_{n_1}\sum_{m_1}\frac{1}{n_1+m_1+1}b_{n'}b_{m'}\lesssim
\sum_{n_2,\dots,n_{d-1}}\sum_{n_1}b_{n'}^2,
\end{equation}
which clearly can be disintegrated into
\begin{equation}\label{o7}
\sum_{n=0}^\infty\sum_{m=0}^\infty\frac{1}{n+m+1}b_{n}b_{m}
\lesssim 
\sum_{n=0}^\infty b_{n}^2.
\end{equation}

\medskip

We conclude the proof with an argument for (\ref{o7}). By symmetry in $n$ and $m$,
the statement follows from
\begin{equation}\nonumber
\sum_{n=0}^\infty\sum_{m=0}^n\frac{1}{n+m+1}b_{n}b_{m}
\lesssim 
\sum_{n=0}^\infty b_{n}^2,
\end{equation}
and to reduces to
\begin{equation}\nonumber
\sum_{n=0}^\infty b_{n}\frac{1}{n+1}\sum_{m=0}^nb_{m}\lesssim \sum_{n=0}^\infty b_{n}^2.
\end{equation}
Applying the Cauchy-Schwarz inequality to the left hand side we see that \eqref{o5} follows directly from
\begin{equation}\label{o8}
\sum_{n=0}^\infty\frac{1}{(n+1)^2}\left (\sum_{m=0}^nb_{m}\right )^2\lesssim
\sum_{n=0}^\infty b_{n}^2.
\end{equation}
Let us prove this last estimate. In terms of the discrete anti-derivative $B_n:=\sum_{m=0}^nb_{m}$, (\ref{o8})
amounts to a discrete version of Hardy's inequality and can be established in a similar way: We obtain by a discrete integration by parts
\begin{eqnarray*}
\sum_{n=0}^N\frac{1}{(n+1)^2}B_n^2 &\sim& \sum_{n=0}^N\frac{1}{(n+1)(n+2)}B_n^2 = \sum_{n=0}^N\left (\frac{1}{n+1}-\frac{1}{n+2}\right )B_n^2\\
&\le& B_0^2+\sum_{n=1}^{N}\frac{1}{n+1}(B_{n}^2-B_{n-1}^2)= b_0^2+\sum_{n=1}^{N}\frac{1}{n+1}b_n(B_n+B_{n-1})\\
&\le& b_0^2+\left(\sum_{n=1}^{N}b_n^2\sum_{n=1}^N\frac{1}{(n+1)^2}(B_n+B_{n-1})^2\right)^\frac{1}{2}\\
&\le& b_0^2+2\left(\sum_{n=1}^{N}b_n^2\sum_{n=1}^N\frac{1}{(n+1)^2}B_n^2\right)^\frac{1}{2}
\end{eqnarray*}
and thus by Young's inequality $\sum_{n=0}^N\frac{1}{(n+1)^2}B_n^2\lesssim
\sum_{n=0}^{N}b_n^2$, which yields
(\ref{o8}) in the limit $N\uparrow\infty$.
\end{proof}
\begin{proof}[Proof of Lemma \ref{harm_building_block}]
Without loss of generality, we may assume that $L=1$ by scaling, and work in $(0,1)^d$. It also suffices to prove the statement for $p = 2(d-1)/d$ as the other cases follow from Jensen's inequality. By the Sobolev trace theorem \cite[Theorem~5.36]{AdamsSobolev2}, (which we may use because the cube is bi-Lipschitz equivalent to a ball,) and the Poincar\'e inequality (we may assume that $v$ has zero average,) there holds:
\begin{equation}\label{eq:traceestimate}
    \left (\int_{\partial(0,1)^d} \vert v\vert^{\frac{2(d-1)}{d-2}}\right )^{\frac{d-2}{2(d-1)}} \lesssim \left(\int_{(0,1)^d} \vert \nabla v\vert^2\right)^{\frac{1}{2}}.
\end{equation}
Noting that $q \defeq 2(d-1)/(d-2)$ satisfies $1/p + 1/q= 1$, by integration by parts and H\"older's inequality, we may write
\begin{equation*}
 \int_{(0,1)^d} |\nabla v|^2 = \int_{\partial (0,1)^d} v g \leq  \left(\int_{\partial (0,1)^d} \vert v\vert^q \right)^{\frac{1}{q}}  \left( \int_{\partial (0,1)^d} \vert g\vert ^p \right)^{\frac{1}{p}}. 
\end{equation*}
Using \eqref{eq:traceestimate} and regrouping the terms in $\nabla v$ yields the desired estimate.
\end{proof}

\medskip

 \begin{proof}[Proof of Lemma \ref{v_periodic}]
Considering $\bb$ on the strip $\R^{d-1}\times (0,L)$, the fact that it derives from a potential ($\bb = -\nabla \tilde{v}$) is standard (as for the over-relaxed problem \eqref{eq:overrelaxed}, see Section IX.3 of \cite{DautLionBook3}).
Moreover, we have $\tilde v(x)= v(x)+\xi'\cdot x'$, where $v$ is (horizontally) periodic (with period $L$) on $\mathbb{R}^d\times(0,L)$ and $\xi'\in\mathbb{R}^{d-1}$. The latter can be seen by noting that for any $i=1,\dots,d-1$, the function $\tilde v(\cdot+Le_i)-\tilde v$ has gradient $-\bb(\cdot+L\be_i)+\bb$, which vanishes by periodicity of $\bb$, and thus must agree with some constant $\xi_i$, which implies
that $v(x):=\tilde v(x)-\xi'\cdot x'$, where $\xi':=(\xi_1,\dots,\xi_{d-1})$, is periodic.
It remains to prove that $\xi'=0$. Since replacing $\bb$ by $\bb-\dashint_{[0,L)^{d-1}\times(0,L)}\bb'$, where $\bb'$ denotes the horizontal component of $\bb$, does not affect the constraint and strictly reduces the energy unless $\dashint_{[0,L)^{d-1}\times(0,L)}\bb'=0$, we learn from minimality that this average must indeed vanish. Taking the average of the identity  $-\bb'=\nabla' v+\xi'$ and noting that $\dashint_{[0,L)^{d-1}\times(0,L)}\nabla'v=0$, because the horizontal variables run over the torus, we obtain the desired $\xi'=0$.

\medskip

The fact that $v$ can be chosen to vanish on $\{x_d= L\}$ follows from the free boundary condition on that face, in the same way that the over-relaxed potential $v_0$ of \eqref{eq:overrelaxed} vanishes on $\underline{Q_L}$.
\end{proof}
\begin{proof}[Proof of Lemma \ref{hoelder_v}]
We first prove that $v$ is H\"older continuous on $\R^{d-1}$. By Corollary \ref{firstdecay} and Lemma \ref{small_scales}, it is clear that for $(x',x_d)\in \R^{d-1}\times (0,L)$, there holds
\begin{equation}\nonumber
  \vert \bb (x',x_d)\vert \lesssim \dfrac{1}{x_d\,^{1/2}}.
\end{equation}
Thus, considering two points $x',y' \in \R^{d-1}$, which by horizontal periodicity we may suppose to be at distance less than $L$, we join $(x',0)$ to $(y',0)$ by a vertical half circle contained in $\R^{d-1}\times (0,L)$ of diameter $2r = \vert x'-y'\vert$ and integrate $\nabla v= -\bb$ along this curve to get
\begin{equation}\nonumber
    \vert v(x',0)-v(y',0)\vert \leq \int_0^{\pi} \dfrac{1}{(r\sin s)^{1/2}} r\dd s \lesssim \sqrt{r}.
\end{equation}
Furthermore by Lemma \ref{v_periodic}, $v$ vanishes on $\R^{d-1}\times \{L\}$. Thus the potential $v$ solves a Dirichlet problem with H\"older-$1/2 $ boundary conditions on $\R^{d-1}\times \{0,L\}$. As can be seen via the representation through the Poisson kernel (for the slab), the modulus of H\"older continuity transmits (up to a constant) from the boundary data to its harmonic extension.
\begin{equation}\label{eq:vholder}
[\, v\,]_{\text{C}^{1/2}(Q_L)} \lesssim 1.
\end{equation}

\medskip

Let us turn to the over-relaxed potential $v_0$ defined on $Q_l$;  let $w\defeq v-v_0$. As $v_0=0$ on $\underline{Q_l}$, using the first part of the lemma, there holds $w=v$ on $\underline{Q_l}$ hence $[\, w\,]_{\text{C}^{1/2}(\underline{Q_l})} \lesssim 1$. Furthermore, since on $\Gamma_l$, $\nabla v_0 \cdot \nu = -\bb \cdot \nu=  \nabla v \cdot \nu$, there holds $\nabla w\cdot \nu =0$ on $\Gamma_l$. We thus reflect $w$ evenly 
 across $\Gamma_l \cap \{x_d=l\}$ to extend it harmonically onto a box of double the height. Subsequently by horizontal even reflections, we extend $w$ harmonically to the whole strip $\R^{d-1}\times [\,0,2l\,]$. We control the H\"older-$1/2$ semi-norm of $w$ on the whole boundary. We then conclude as in the case of $v$ that $[\, w\,]_{C^{1/2}(Q_l)}\lesssim 1$, which yields $[\, v_0\,]_{C^{1/2}(Q_l)}\lesssim 1$ by \eqref{eq:vholder} and the triangle inequality.
\end{proof}
\begin{proof}[Proof of Lemma \ref{pointwise_estimate}]
 As the potential $v_0$ vanishes on $\underline{Q}_l$,
its odd extension across the plane $\{x_d=0\}$, which we still denote by $v_0$, is harmonic.
Setting $r:={\rm dist}((x',0),\Gamma_l)$, we thus obtain by inner regularity theory (Theorem~2.10 in \cite{GilbargTrudinger})
\begin{align}\label{lo01}
|\partial_d v(x',0)|\lesssim\frac{1}{r}\sup_{B((x',0),r)}|v|.
\end{align}
By the H\"older continuity of Lemma \ref{hoelder_v} we obtain in particular
$|v_0(x)|\lesssim x_d\,^{1/2}$, so that (\ref{lo01}) yields the desired
estimate in the form of $|\partial_d v(x',0)|\lesssim r^{-1/2}$.
 \end{proof}
\begin{proof}[Proof of Corollary \ref{firstdecay}]
By periodicity, we may assume $x'=0$. For $L/2\geq x_d \gg 1$, by Theorem \ref{uniform_bound} there holds
\[
    \int_{Q_{2x_d}} \vert \bb \vert^2 \lesssim x_d\,^{d-1}.
\]
This extends to $L\geq x_d\gg 1$ by reflection across $\{x_d= L\}$, as $v=0$ on that plane.
Since $\vert \bb\vert^2$ is sub-harmonic in $Q_{2x_d}$, the mean value property yields
\[
    \vert \bb(0,x_d) \vert^2 \lesssim \dfrac{1}{x_d\,^d}\int_{Q_{2x_d}}\vert \bb \vert^2 \lesssim \dfrac{1}{x_d}.
\]
\end{proof}
\begin{proof}[Proof of Lemma \ref{small_scales}]
Consider a minimizer $(u,\bb)\in \Aper$. We want to control the field $\bb(x',x_d)=-\nabla v(x',x_d)$. Without loss of generality, we assume $x'=0$. 
Given $R>0$, let $B^+_R \defeq B(0,R) \cap \{x_d>0\}$ and $B_R^{d-1}\defeq B(0,R) \cap \{x_d=0\}$. We compare $v$ with the potential $\tilde{v}$ generated by the charges $u$ in $B_2^{d-1}$, which can be written as a single layer potential. We only use the representation of the gradient of $\tilde{v}$:
\begin{equation}\label{eq:nablav}
\nabla \tilde{v}(x',x_d) = c_d \int_{B_{2}^{d-1}} \Frac{(x'-y',x_d)}{\vert (x',x_d)-(y',0)\vert^d} u(y') \dd y'.
\end{equation}
The function $w\defeq v-\tilde{v}$ is harmonic in $B_2^+$ with zero boundary flux on $B_2^{d-1}$; it can thus be reflected across $B_2^{d-1}$ to obtain a harmonic function on $B_2$, which we still denote by $w$. 
Hence $\vert \nabla w\vert^2$ is sub-harmonic and there holds
\begin{equation}\label{eq:nablaw}
\sup_{x \in B_1^+}\vert \nabla w(x) \vert \lesssim \left (\int_{B_{2}} \vert \nabla w \vert^2 \right )^{\frac{1}{2}} \lesssim \left (\int_{B_{2}^+} \vert \nabla \tilde{v} \vert^2 + \vert \nabla v \vert^2 \right )^{\frac{1}{2}}.
\end{equation}
Applying Theorem \ref{uniform_bound} to $Q_{4}$, we get $\int_{B_2^+}\vert \nabla v\vert^2 \lesssim 1$, so that it remains to control $\vert \nabla \tilde{v}\vert^2$. We claim that for $(x',x_d)\in B_2^+$,
\begin{equation}\label{eq:tildev}
\vert \nabla \tilde{v}(x',x_d) \vert \lesssim \ln\left (\dfrac{1}{x_d}\right),
\end{equation}
which  first gives $\sup_{x\in B_1^+} \vert \nabla w(x)\vert \lesssim 1$ by \eqref{eq:nablaw} and then the statement of the lemma by the triangle inequality (in the sup-norm).

\medskip

The proof of estimate \eqref{eq:tildev} is elementary by the representation \eqref{eq:nablav}, which yields
\begin{eqnarray*}
  |\nabla \tilde{v}(x',x_d)| &\lesssim& \int_{B_{2}^{d-1}} \Frac{1}{(\vert x'-y'\vert +  x_d)^{d-1}} \dd y'
  \lesssim \int_{B_{4}^{d-1}} \Frac{1}{(\vert z'\vert +  x_d)^{d-1}} \dd z'\\
  &\lesssim& \int_0^4 \Frac{r^{d-2}}{(r +  x_d)^{d-1}} \dd r
   \lesssim \int_0^4 \Frac{1}{r +  x_d} \dd r \lesssim \ln \left(\dfrac{1}{x_d} \right ).
\end{eqnarray*}
\end{proof} 

\medskip

\begin{proof}[Proof of Lemma \ref{sigma_null_sigma}]
Suppose, as in the statement of the lemma, that $L\geq l\gg 1$, and that $(u,\bb)$ is a minimizer in $\Aper$. Let $v_0$ be the solution to the over-relaxed problem \eqref{eq:overrelaxed} with flux boundary data $\bb\cdot \nu$ on $\Gamma_l$. It suffices to construct a candidate  $(u^*, \bb^*)$ in $\mathcal{A}^0(Q_l)$ such that 
\begin{equation}\label{eq:ubstar}
   \dfrac{E(u^*,\bb^*,Q_l)}{l^{d-1}}\leq  \dfrac{E(u,\bb,Q_l)}{l^{d-1}} -\dfrac{1}{l^{d-1}} \int_{Q_l} \dfrac{1}{2}\vert  \nabla v_0\vert^2 +\dfrac{C}{l^{1/2}}.
\end{equation}
 The candidate field $\bb^*$ will be a controlled modification of $\bb+ \nabla v_0$, which has vanishing flux boundary condition on $\Gamma_l$. We need to modify it near $\underline{Q_l}$, jointly with $u$ in order to obtain the right flux boundary condition on $\underline{Q_l}$. We decompose the bottom of $Q_l$ into cubes $Q_i$ of side length $\lambda\sim 1$, such that $\lambda$ divides $l$ and is large enough (depending only on $d$, in view of an application of Lemma \ref{lemma_flow}) so that on each cube $Q_i$ there holds
\begin{equation}\label{estim_energy}
   E(u,\bb,Q_i) \lesssim \lambda^{d-1} \text{ (by Theorem \ref{uniform_bound}), or }E(u,\bb,Q_i) \leq\lambda^{d-2} \Lambda \text{ for some }\Lambda \lesssim 1,
  \end{equation}
  as well as  $\vert \dashint_{\underline{Q_i}} u \vert \leq \frac{1}{2}$ (by Lemma \ref{control_u_avg}), and  $ \dashint_{\underline{Q_i}} \vert \partial_d v_0\vert \leq \frac{1}{2}$ (by Lemma \ref{pointwise_estimate}).
Indeed, the last condition is satisfied for large enough $\lambda$, as, by Lemma~\ref{pointwise_estimate}:
\begin{equation}\nonumber
    \dashint_{\underline{Q_i}}\vert \partial_d v_0\vert \lesssim \dashint_{\underline{Q_i}} \dist((x',0), \Gamma_l)^{-\frac{1}{2}}\dd x' \lesssim \dfrac{1}{\lambda^{1/2}}.
\end{equation}

\medskip

For each cube $Q_i$ not adjacent to $\Gamma_l$, again by Lemma~\ref{pointwise_estimate}, we have:
\begin{equation}\label{eq:v0flux}
    \dashint_{\underline{Q_i}}\vert \partial_d v_0\vert \lesssim \dist(Q_i,\Gamma_l)^{-\frac{1}{2}}.
\end{equation}
Thus, if $m_0$ is given by Lemma \ref{lemma_flow} with the constant $\Lambda$ coming from condition \eqref{estim_energy} (which in particular controls $\dashint_{\underline{Q_i}}\vert \nabla u\vert$) there exists a distance $R\lesssim 1$ such that if $\dist(Q_i,\Gamma_l)\geq R$, then
\begin{equation}\label{eq:v0flux-inner}
    \dashint_{\underline{Q_i}} \vert \partial_d v_0\vert\leq m_0.
\end{equation}
We call these cubes ''inner cubes'' and treat separately the cubes with $\dist(Q_i,\Gamma_l)< R$ (''outer cubes''). If $Q_i$ is an inner cube, thanks to \eqref{eq:v0flux-inner}, we may apply Lemma \ref{lemma_flow} on $\underline{Q_i}$ with
\begin{equation}\nonumber
    m_i \defeq  \dashint_{\underline{Q_i}}  \partial_d v_0.
\end{equation}
This yields $\tilde u_i \in \{-1,1\}$ with $(\tilde{u}_i-u)$ compactly supported in $\underline{Q_i}$ and
\begin{eqnarray}
    \dashint_{\underline{Q_i}} \tilde{u}_i &=& \dashint_{\underline{Q_i}} (u+\partial_d v_0 ) = -\dfrac{1}{\lambda^{d-1}}\int_{\Gamma_i} (\bb + \nabla v_0)\cdot \nu,\label{eq:ui}\\
    \dashint_{\underline{Q_i}} \vert \tilde{u}_i - u\vert &\lesssim& \vert m_i\vert \overset{\eqref{eq:v0flux}}{\lesssim} \dist(Q_i,\Gamma_l)^{-\frac{1}{2}},\label{eq:uimi}\\
    \int_{\underline{Q_i}} \vert \nabla \tilde{u}_i\vert - \int_{\underline{Q_i}} \vert \nabla u \vert&\lesssim& \vert m_i\vert \overset{\eqref{eq:v0flux}}{\lesssim}\dist(Q_i,\Gamma_l)^{-\frac{1}{2}}\label{eq:varuimi}.
\end{eqnarray}
As $\tilde{u}_i=u$ near the $(d-2)$-dimensional boundary of $\underline{Q_i}$, there is no added interface at the junction between the inner cubes.
We then introduce a harmonic building block $\bb_i$ in $Q_i$ with zero boundary flux on $\Gamma_i$ and flux boundary data $\tilde{u}_i - u -\partial_d v_0$ on $\underline{Q_i}$. By Lemma~\ref{harm_building_block} for $p=2$, which we may apply because of \eqref{eq:ui}, the corresponding energy is controlled  as
\begin{equation*}
    \int_{Q_i} \vert \bb_i \vert^2 \lesssim \lambda \int_{\underline{Q_i}} \vert \tilde{u}_i - u -\partial_d v_0 \vert^2
    \lesssim \int_{\underline{Q_i}}\vert \tilde{u}_i-u \vert^2 + \int_{\underline{Q_i}}\vert \partial_d v_0 \vert^2
   \overset{\eqref{eq:uimi},\eqref{eq:v0flux}}{\lesssim} \dist(Q_i,\Gamma_l)^{-\frac{1}{2}}.
\end{equation*}

\medskip

It remains to treat the outer cubes; in this case, we can  modify the field more crudely as there are only about $l^{d-2}$ such cubes. 
If $Q_i$ is an outer cube, partition $\underline{Q_i}$ into two boxes, and define $\tilde{u}_i$ on $\underline{Q_i}$ such that $\tilde{u}_i =1$ on one box and $\tilde{u}_i=-1$ on the other. The size of the two boxes is chosen so that
\begin{equation}\label{zeroflux_u_i}
\int_{\underline{Q_i}} \left (\tilde{u}_i-\partial_d v_0 - u\right )=0.
\end{equation}
The interfacial energy in $\underline{Q_i}$ is given by
\begin{equation}\label{eq:uitilde}
\int_{\underline{Q_i}} \vert \nabla \tilde{u}_i\vert = \lambda^{d-2}\lesssim 1.
\end{equation}
We then add a harmonic building block $\bb_i$ with zero boundary flux on $\Gamma_i$ and flux boundary data $-\partial_d v_0 - u + \tilde{u}_i$ on $\underline{Q_i}$. Fix an exponent $p <2$ for which Lemma \ref{harm_building_block} holds, it is applicable because of \eqref{zeroflux_u_i}. The energy of $\bb_i$ is controlled as
\begin{equation}\nonumber
\int_{Q_i} \vert \bb_i\vert^2  \lesssim \lambda^{d-(d-1)\frac{2}{p}} \left ( \int_{\underline{Q_i}} \vert \partial_d v_0 -u+\tilde{u}_i\vert ^p\right )^{\frac{2}{p}}
\lesssim\left ( \left(\int_{\underline{Q_i}} \vert \partial_d v_0\vert ^p\right) +1 \right )^{\frac{2}{p}}.
\end{equation}
By Lemma \ref{pointwise_estimate} and since $p<2$, we obtain
\begin{equation}\label{eq:b-outer}
\int_{Q_i} \vert \bb_i\vert^2 \lesssim 1.
\end{equation}
We note that the total interfacial energy of the union of the outer cubes is controlled by $l^{d-2}$. 

\medskip

We thus define a candidate in $(\bb^*,u^*)\in \mathcal{A}^0(Q_l)$ by
 \begin{eqnarray}
     \bb^* &=& \begin{cases}
     \bb + \nabla v_0 &\text{ if } x_d\geq \lambda,\\
     \bb + \nabla v_0 + \bb_i &\text{ in } Q_i,
     \end{cases}\label{eq:defbstar}\\
     u^* &=& \tilde{u}_i \text{ in } \underline{Q_i}.\nonumber
 \end{eqnarray}
To compute its total energy $E(\bb^*,u^*,Q_l)$, we start with the interfacial energy
 \begin{align}
     \int_{\underline{Q_l}} \vert \nabla u^*\vert &\overset{\eqref{eq:uitilde}}{\leq} \int_{\underline{Q_l}} \vert \nabla u\vert  + \sum_{Q_i\text{ inner cube}} \left (\int_{\underline{Q_i}} \vert \nabla \tilde{u}_i \vert - \int_{\underline{Q_i}} \vert \nabla u \vert \right ) + C l^{d-2} \nonumber\\
     &\overset{\eqref{eq:varuimi}}{\leq} \int_{\underline{Q_l}} \vert \nabla u \vert +  C \left (\sum_{Q_i \text{ inner cube}} \dist(Q_i,\Gamma_l)^{-\frac{1}{⅛}}+ l^{d-2}\right )\nonumber\\ 
    &\overset{\hspace{.6cm}}{\leq} \int_{\underline{Q_l}} \vert \nabla u\vert + C l^{d-\frac{3}{2}}.\label{eq:ustarcontrol}
  \end{align}
 For the field energy, by Lemma \ref{orthogonality_lemma}, we note
\begin{equation*}
     \int_{Q_l} \vert \bb^*\vert^2 \overset{\eqref{eq:defbstar}}{=} \int_{Q_l} \vert \bb + \bb_i \mathbb{1}_{Q_i} +\nabla v_0 \vert^2 = \int_{Q_l} \vert \bb + \bb_i \mathbb{1}_{Q_i} \vert^2 -\int_{Q_l} \vert \nabla v_0\vert^2.
\end{equation*}
Thus, there holds
\begin{equation*}
    \int_{Q_l} \vert \bb^*\vert^2 + \int_{Q_l} \vert \nabla v_0\vert^2 \overset{\eqref{eq:defbstar}}{=} \int_{Q_l\cap\{x_d\geq \lambda\}} \vert \bb \vert^2 + \sum_{i} \int_{Q_i} \vert \bb + \bb_i\vert^2
\end{equation*}
and thus
\begin{align}\label{eq:bstarv0b}
    \int_{Q_l} \vert \bb^*\vert^2 + \int_{Q_l} \vert \nabla v_0\vert^2 -\int_{Q_l} \vert \bb \vert^2 =\sum_{i} \int_{Q_i} \left (\vert \bb_i\vert^2 + 2 \bb\cdot \bb_i \right)\hspace{-8cm}& \nonumber \\
    &\lesssim \sum_{Q_i\text{ outer cube}} \int_{Q_i} \left (\vert \bb_i\vert^2 + \vert \bb \vert^2 \right) + \sum_{Q_i \text{ inner cube}} \left ( \int_{Q_i}\vert \bb_i\vert^2 +  \left \vert \int_{Q_i} \bb\cdot \bb_i \right\vert \right).
\end{align}

\medskip

 For the inner cubes,  we integrate the mixed terms by parts, using the facts that $\bb$ is the gradient of a H\"older-$1/2$ potential $v$ (cf. Lemma \ref{hoelder_v}) and that $\bb_i$ is divergence-free with zero boundary flux on $\Gamma_i$. More precisely, choosing a point $(x'_i,0)$ at the bottom of the inner cube $Q_i$, we get:
 \begin{align*}
     \left \vert \int_{Q_i} \bb\cdot \bb_i \right\vert &= \left \vert \int_{\underline{Q_i}} (v-v((x'_i,0)) \bb_i\cdot\nu \right\vert= \left \vert \int_{\underline{Q_i}} (v-v((x'_i,0)) (\tilde{u}_i - \partial_d v_0-u) \right\vert \\
     &\leq \sup_{x'\in \underline{Q_i}} \vert v(x',0)-v((x'_i,0))\vert \left (\int_{\underline{Q_i}}\vert \tilde u_i-u\vert +\int_{\underline{Q_i}}\vert\partial_d v_0 \vert\right )\\
     &\hspace{-.4cm} \overset{\eqref{eq:v0flux}, \eqref{eq:uimi}}{\lesssim}  \dist(Q_i,\Gamma_l)^{-\frac{1}{2}}.
\end{align*}
For the outer cubes, we appeal to \eqref{eq:b-outer} and condition \eqref{estim_energy} in the choice of $\lambda$.
 We plug this into the  estimate \eqref{eq:bstarv0b} to obtain
 \begin{equation}\nonumber
    \int_{Q_l} \vert \bb^*\vert^2 + \int_{Q_l} \vert \nabla v_0\vert^2 -\int_{Q_l} \vert \bb \vert^2  \lesssim  l^{d-2} + \sum_{Q_i\text{ inner cube}}  \dist(Q_i,\Gamma_l)^{-\frac{1}{2}}
    \lesssim l^{d-\frac{3}{2}}.
\end{equation}
Combining interfacial and field energy, cf. \eqref{eq:ustarcontrol} and the last estimate, we get the desired \eqref{eq:ubstar} in form of
 \[
    E( u^*,\bb^*, Q_l)\leq E(u,\bb,Q_l)-\int_{Q_l} \dfrac{1}{2}\vert \nabla v_0\vert^2 + C l^{d-\frac{3}{2}}.
 \]
\end{proof}

\begin{proof}[Proof of Lemma \ref{sigma_sigma_null}]
Recall that $v_0$ is the solution to the over-relaxed problem \eqref{eq:overrelaxed} in $Q_l$ with flux boundary data $\bb\cdot \nu$ on $\Gamma_l$. Consider a periodic minimizer $(u^{per},\bb^{per}) \in \mathcal{A}^{per}(Q_l)$. Let $v_0^{per}$ denote the solution to the over-relaxed problem in $Q_l$ with flux boundary data $\bb^{per}\cdot \nu$. Notice that the field $\bb^{per}+\nabla v_0^{per} -\nabla v_0$ has the same normal flux boundary conditions as $\bb$ on $\Gamma_l$. We will modify $(u^{per},\bb^{per}+\nabla v_0^{per} -\nabla v_0)$ in order to obtain a candidate $(u^*,\bb^*)$ in $\mathcal{A}^{\bb\cdot\nu}(Q_l)$ such that
\begin{equation}
  \label{eq:bstarbper}
  \dfrac{E(u^*,\bb^*,Q_l)}{l^{d-1}} \leq \dfrac{E(u^{per},\bb^{per},Q_l)}{l^{d-1}} + \dfrac{1}{l^{d-1}}\int_{Q_l}\dfrac{1}{2}\vert \nabla v_0 \vert^2 + \dfrac{C}{l^{1/2}},
\end{equation}
which implies \eqref{eq:E_le_sigper} up to an additional interfacial energy of order $l^{d-2}$.
As in Lemma \ref{sigma_null_sigma}, we decompose the bottom of $Q_l$ into cubes $Q_i$ and consider separately the cubes near $\Gamma_l$ and the others. We choose cubes of side length $\lambda \sim 1$, where $\lambda$ divides $l$ and is such that on each cube $Q_\lambda$ of side length $\lambda$ contained in $Q_l$ and with base in $\underline{Q_l}$, there holds $ E(u^{\per},\bb^{\per},Q_\lambda) \lesssim 1$ ( by Theorem \ref{uniform_bound}), $\vert \dashint_{\underline{Q_\lambda}}  u^{\per} \vert \leq \frac{1}{2}$ (by Lemma \ref{control_u_avg}), and $\dashint_{\underline{Q_\lambda}}\vert \partial_d v_0^{\per}\vert \leq \frac{1}{4}$ and  $\dashint_{\underline{Q_\lambda}} \vert \partial_d v_0\vert \leq \frac{1}{4}$ (by Lemma \ref{pointwise_estimate}). Lemma \ref{pointwise_estimate} in facts yields the following more precise estimate for an inner cube $Q_i$
\begin{equation}\nonumber
 \dashint_{\underline{Q_i}} (\vert \partial_d v_0^{per}\vert + \vert \partial_d v_0\vert) \lesssim \dist(Q_i,\Gamma_l)^{-\frac{1}{2}}.
\end{equation}

\medskip

We do not detail the construction of the new charge $u^*$ (equal to $\tilde{u}_i$ on the cubes $Q_i$) as it is similar to what we did in the proof of Lemma \ref{sigma_null_sigma}, replacing $u$ by $u^{per}$ when applying Lemma \ref{lemma_flow} for the inner cubes with
\begin{equation}\nonumber
   m_i \defeq \dashint_{\underline{Q_i}} (-\partial_d v_0^{per}-\partial_d v_0).
\end{equation}
When it comes to the construction of $\bb^*$ there is a slight difference coming from the fact that we need to take both $\nabla v_0$ and $\nabla v_0^{per}$ into account. On a cube $Q_i$, we let $\bb_i$ be the harmonic building block corresponding to Lemma \ref{harm_building_block} with zero boundary flux  on $\Gamma_i$ and flux boundary data $\tilde{u}_i - u^{per} +\partial_d v_0^{per} -\partial_d v_0$ on $\underline{Q_i}$. We then define $(u^*,\bb^*)\in \mathcal{A}^{\bb\cdot \nu}(Q_L)$ by
 \begin{eqnarray}
     \bb^* &=& \begin{cases}
     \bb^{per} + \nabla v_0^{per} -\nabla v_0 &\text{ if } x_d\geq \lambda,\\
     \bb^{per} + \nabla v_0^{per} - \nabla v_0 + \bb_i &\text{ if } (x',x_d)\in Q_i,
     \end{cases}\label{eq:defbstar2}\\
     u^* &=& \tilde{u}_i \quad\text{ in } \underline{Q_i}.\nonumber
 \end{eqnarray}
As for \eqref{eq:ustarcontrol}, the interfacial energy is controlled as 
\begin{equation}\label{eq:ustar_uper}
     \int_{\underline{Q_l}} \vert \nabla u^*\vert \leq \int_{\underline{Q_l}} \vert \nabla u^{per}\vert + C l^{d-\frac{3}{2}}.
\end{equation} 
 To compute the energy of $\bb^*$, we first notice that Lemma \ref{orthogonality_lemma} implies
\begin{eqnarray*}
      \int_{Q_l} \vert \bb^*\vert^2-\int_{Q_l} \vert \nabla v_0\vert^2 &=& \int_{Q_l} \vert \bb^*+ \nabla v_0\vert^2 \\
&\overset{\eqref{eq:defbstar2}}{=}&  \int_{Q_l\cap\{x_d\geq \lambda\}} \vert \bb^{per}+\nabla v_0^{per} \vert^2 + \sum_{i} \int_{Q_i} \vert \bb^{per} + \nabla v_0^{per} + \bb_i\vert^2\\
&=& \int_{Q_l} \vert \bb^{per}+\nabla v_0^{per} \vert^2 + \sum_{i} \int_{Q_i}\left (  2 (\bb^{per} + \nabla v_0^{per}) \cdot \bb_i + \vert \bb_i\vert^2\right ).
\end{eqnarray*}
We apply Lemma \ref{orthogonality_lemma} to $\bb^{per}+ \nabla v_0^{per}$ and use the fact that $\int_{Q_l} \vert \nabla v_0^{per}\vert^2 \geq 0$ to obtain
\begin{eqnarray}
   \int_{Q_l} \vert \bb^*\vert^2-\int_{Q_l} \vert \nabla v_0\vert^2 - \int_{Q_l} \vert \bb^{per}\vert^2 &\leq& \sum_{i} \left \vert \int_{Q_i}  2 (\bb^{per} + \nabla v_0^{per}) \cdot \bb_i\right \vert + \sum_{i}\int_{Q_i} \vert \bb_i\vert^2\nonumber
\\
   &&\hspace{-3cm}\lesssim \int_{Q_l} \vert \nabla v_0^{per} \vert^2 + \sum_{Q_i \text{ outer cube}} \int_{Q_i}\left( \vert \bb_i\vert^2 + \vert \bb^{per}\vert^2\right)\nonumber\\
&+&
\sum_{Q_i \text{ inner cube}}\left ( \int_{Q_i} \vert \bb_i\vert^2  + \left \vert \int_{Q_i}  \bb^{per} \cdot \bb_i \right \vert \right).\label{eq:bstar_last}
\end{eqnarray}
Applying Lemma \ref{sigma_null_sigma} to $(u^{per},\bb^{per})$ (which is a minimizer in $\mathcal{A}^{per}(Q_l)$) and using Corollary \ref{existence_of_limit} (which only relied on Lemma \ref{sigma_null_sigma}) to get $\vert \sigma^{per}(Q_l)-\sigma^0(Q_l)\vert \lesssim l^{-1/2}$, we obtain
\[
   \dfrac{1}{l^{d-1}}\int_{Q_l} \dfrac{1}{2}\vert \nabla v_0^{per} \vert^2 
   \leq \sigma^{per}(Q_l)- \sigma^{0}(Q_l) +\dfrac{C}{l^{1/2}} \lesssim  \dfrac{1}{l^{1/2}}.
\]
The remaining terms on the r.\ h.\ s. of \eqref{eq:bstar_last} are controlled by the same arguments as those in \eqref{eq:bstarv0b}, to the effect of 
\[
   \int_{Q_l} \vert \bb^*\vert^2 - \int_{Q_l}\vert \nabla v_0\vert^2 - \int_{Q_l} \vert \bb^{per} \vert^2 \lesssim l^{d-\frac{3}{2}}.
\]
Combined with \eqref{eq:ustar_uper}, this yields \eqref{eq:bstarbper}.
\end{proof}
\begin{proof}[Proof of Corollary \ref{existence_of_limit}]
Using statements \eqref{basic per free} and \eqref{sigma_basic_comparison} of Lemma \ref{basic} as well as Lemma \ref{sigma_null_sigma} for $l=L\gg 1$, we obtain
\begin{equation}\label{eq:comparison1}
    \sigma^{per}(Q_L) \leq \sigma(Q_L) +\dfrac{C}{L}\leq \sigma^0(Q_L)+ \dfrac{C}{L} \leq \sigma^{per}(Q_L) +\dfrac{C}{L^{1/2}}.
\end{equation}
Thus, it suffices to prove that there exists $\sigma^*\in (0,+\infty)$ with $\vert \sigma^0(Q_L) -\sigma^*\vert \lesssim L^{-1/2}$. In fact, it even suffices to prove the existence of the limit $\sigma^*$ of $\sigma^0(Q_L)$ and thus of $\sigma(Q_L)$ as $L\uparrow + \infty$. Indeed, supposing the limit exists,  statements \eqref{basic free kl}, \eqref{basic 0 kl}, and \eqref{sigma_basic_comparison} of Lemma~\ref{basic}, together with \eqref{eq:comparison1} imply
\[
      \sigma(Q_L)\leq \lim_{k\to \infty} \sigma(Q_{kL}) = \sigma^* = \lim_{k\to \infty} \sigma^0(Q_{kL}) \leq \sigma^0(Q_L) \leq \sigma(Q_L)+ \dfrac{C}{L^{1/2}}.
\]
We now argue that $\lim_{L\to +\infty} \sigma^0(Q_L)\in (0,+\infty)$ by showing that the integer monotonicity of Lemma \ref{basic} \eqref{basic 0 kl} approximately extends to all $L\gg l \gg 1$ in form of
\begin{equation}\label{eq:sigma0monotone}
  \sigma^0(Q_L) \leq \sigma^0(Q_l) + \dfrac{C}{l}.
\end{equation}
This together with Lemma \ref{basic} \eqref{sigma_basic_comparison} and \eqref{basic lower bound} yield the existence of a positive and finite limit. 
In order to prove \eqref{eq:sigma0monotone}, let $(u,\bb)$ be a minimizer in $\mathcal{A}^0(Q_L)$ for $L\gg 1$. Given a positive number $\lambda$, we define $(u^\lambda,\bb^\lambda)$ by $u^\lambda(\lambda\, \cdot)=  u( \cdot)$ and $\bb^\lambda(\lambda\, \cdot) = \bb( \cdot)$  (as in the proof of Theorem \ref{equipartition}).
Clearly $(u^\lambda,\bb^\lambda)\in \mathcal{A}^0(Q_{\lambda L})$, and it is easy to see by a change of variables that
\[
    \dfrac{E(u^\lambda,\bb^\lambda, Q_{\lambda L})}{(\lambda L)^{d-1}} \leq \max\Big\{\lambda, \dfrac{1}{\lambda}\Big\} \dfrac{ E(u,\bb,Q_L)}{L^{d-1}}.
\]
From this we infer that for $L\geq L'\gg 1$,  $\sigma^0(Q_L) \leq  (L/L')\,  \sigma^0(Q_{L'})$ 
and in turn, as Lemma~\ref{basic}~\eqref{sigma_0_basic_bound} implies $\sigma^0(Q_L)\lesssim 1$, there holds
\begin{equation}\label{eq:densitydiff}
        \vert \sigma^0(Q_{L'}) - \sigma^0(Q_{L}) \vert \lesssim \dfrac{L}{L'} -1.
\end{equation}
Now, for $L\gg l \gg 1$, let $k\in \N$ be  such that $(k+1) l > L \geq k l$; applying \eqref{eq:densitydiff} with $L' = kL$, we obtain by Lemma \ref{basic} \eqref{basic 0 kl}
\[
    \sigma^0(Q_L) \leq \sigma^0(Q_{kl}) + \dfrac{C}{l} \leq \sigma^0 (Q_{l})+ \dfrac{C}{l}.
\]
\end{proof}
\begin{proof}[Proof of Lemma \ref{control-v_0}]
 Let $1\ll l' \ll l\leq L$, where we assume $l$ to be an odd integer multiple of $l'$. Let $-\nabla v_0^{l'}$ be the solution of the over-relaxed problem \eqref{eq:overrelaxed} in $Q_{l'}$ with flux boundary data $\bb\cdot \nu$ on $\Gamma_{l'}$. We first claim that
  \begin{equation}\label{eq:diffv0}
    \int_{Q_{l'}} \vert \nabla v_0^{l'} - \nabla v_0^{l}\vert^2 \lesssim l^{d-1}l'^{\ \frac{1}{2}}.    
  \end{equation}
To see this, we decompose $Q_l\cap \{x_d <l'\}$ into cubes $Q^i$ of side $l'$ (including $Q_{l'}$), and denote by $-\nabla v_0^{i}$ the corresponding solutions of the over-relaxed problem. Then the field
\begin{equation}\nonumber
  \tilde{\bb} \defeq \begin{cases}
    -\nabla v_0^i &\text{ on }Q^i,\\
     \ \ \ \bb &\text{ on } Q_l \cap \{x_d\geq l'\},
  \end{cases}
\end{equation}
is a competitor for the over-relaxed problem on $Q_l$, so that, using Lemma \ref{orthogonality_lemma},
\begin{eqnarray*}
  \int_{Q_l} \dfrac{1}{2} \vert \tilde{\bb} + \nabla v_0^l\vert^2 &\leq& \int_{Q_l} \dfrac{1}{2} \vert \tilde{\bb}\vert^2  - \int_{Q_l} \dfrac{1}{2} \vert \nabla v_0^l\vert^2\\
  &\leq& \int_{Q_l\cap \{x_d >l'\}} \dfrac{1}{2}  \vert \bb \vert^2 + \sum_i \int_{Q^i} \dfrac{1}{2} \vert \nabla v_0^i\vert^2 - \int_{Q_l} \dfrac{1}{2} \vert \nabla v_0^l\vert^2.
\end{eqnarray*}
On the one hand, we know by Lemma \ref{sigma_null_sigma} and Corollary \ref{existence_of_limit} that
\begin{equation}\nonumber
   \int_{Q^i} \dfrac{1}{2} \vert \nabla v_0^i\vert^2 \leq E(u,\bb,Q^i) -\sigma^* l'^{\ d-1} + C l'^{\ d-\frac{3}{2}}.
\end{equation}
On the other hand, by Lemma \ref{sigma_sigma_null} and Corollary \ref{existence_of_limit}, we have
\begin{equation}\nonumber
   -\int_{Q_l} \dfrac{1}{2} \vert \nabla v_0^l\vert^2 \leq E(u,\bb,Q_l) -\sigma^* l^{d-1} + C l^{d-\frac{3}{2}}.
\end{equation}
Since the energy is additive up to the interfacial energy coming from pasting, 
\begin{equation}\nonumber
   E(u,\bb,Q_l) \leq \int_{Q_l\cap \{x_d >l'\}} \dfrac{1}{2}  \vert \bb \vert^2 + \sum_i  E(u,\bb,Q^i)+ C \dfrac{l}{l'}l^{d-2},
\end{equation}
we obtain
\begin{equation}\nonumber
  \int_{Q_l} \dfrac{1}{2} \vert \tilde{\bb} + \nabla v_0^l\vert^2 \lesssim \left (\dfrac{l}{l'}\right)^{d-1} l'^{\ d-\frac{3}{2}}+ \dfrac{l}{l'}l^{d-2} \lesssim \dfrac{l^{d-1}}{l'^{\ 1/2}}.
\end{equation}
Restricting the integral to $Q_{l'}$, we recover \eqref{eq:diffv0} by definition of $\tilde{\bb}$.

\medskip

Second, we claim
 \begin{equation}\label{eq:v0lonQlprime}
    \int_{Q_{l'}} \vert \nabla v_0^{l}\vert^2 \lesssim \left(\dfrac{l'}{l}\right )^{d} \int_{Q_l} \vert \nabla v_0^{l}\vert^2.
  \end{equation}
Indeed, reflecting $v_0^l$ oddly across $\{x_d=0\}$, we obtain a harmonic function on the box $(-l/2,l/2)^{d-1}\times(-l,l)$, so that \eqref{eq:v0lonQlprime} is a consequence of the mean-value property of the sub-harmonic function $\vert \nabla v_0^l\vert^2$.

\medskip

Combining \eqref{eq:diffv0} and \eqref{eq:v0lonQlprime} with the triangle inequality, and letting $\theta \defeq l'/l$ we get
\begin{equation}\nonumber
    \dfrac{1}{(\theta l)^{d-1}} \int_{Q_{\theta l}}\dfrac{1}{2} \vert \nabla v_0^{\theta l}\vert^2 \leq C_0 \left (\theta \dfrac{1}{l^{d-1}} \int_{Q_l} \dfrac{1}{2}\vert \nabla v_0^l\vert^2 + \dfrac{1}{(\theta l)^{1/2}}\right ).
\end{equation}
We fix $\theta$, depending only on $d$, such that $1/\theta$ is an odd integer, and so small that $C_0 \theta \leq 1/2$; so that letting $D_l \defeq l^{-(d-1)} \int_{Q_l} \frac{1}{2} \vert \nabla v_0^l\vert^2$, we obtain
\begin{equation}\label{eq:stepv0}
  D_{\theta l} \leq \dfrac{1}{2} D_l + \dfrac{C}{l^{1/2}}.
\end{equation}
Applying Lemma \ref{sigma_null_sigma} and Corollary \ref{existence_of_limit} to $l=L$, we get
\begin{equation}\nonumber
  D_L \leq \dfrac{E(u,\bb,Q_L)}{L^{d-1}}  -\sigma^0(Q_L) + \dfrac{C}{L^{1/2}} \lesssim \dfrac{1}{L^{1/2}}.
\end{equation}
With this anchoring, we iterate \eqref{eq:stepv0}, to obtain the desired $D_l\lesssim l^{-1/2}$ for $l\gg 1$.
\end{proof}

\begin{proof}[Proof of Lemma \ref{lemma_flow}]
 By rescaling, we may assume without loss of generality that $\lambda=1$, so that $\cube = (0,1)^{d-1}$. We will define a smooth  vector field $\xi$, compactly supported in~$\cube$, 
such that $\Div \xi$ approximates $u- \dashint_\cube u$. Then we will transport
the values of $u$ along the flow generated by $\xi$, for positive or negative time depending on whether we want to increase or decrease the average charge. The flow of $\xi$ is the map $\Phi_{\cdot}(\cdot): (t,x)\in \R \times \cube \to \cube$ solving the differential equation
\begin{equation}\nonumber
\begin{cases}    
\partial_t \Phi_t(x)= \xi (\Phi_t(x))\quad &\textrm{for } (t,x)\in \R\times \cube, \\
                  \ \ \Phi_t \vert_{t=0}= \Phi_0 = \mathrm{id} \quad &\textrm{on } \cube.
\end{cases}
\end{equation}
As $\xi$ will be smooth and compactly supported, for all $t\in \R$, $\Phi_t$ is a diffeomorphism of $\cube$ that coincides with the identity close to the boundary of $\cube$. For shortness, we set
\begin{equation}\nonumber
u_t:=u\circ \Phi_t^{-1}.     
\end{equation}
By definition, $u_t$, like $u$, takes values into $\{-1,1\}$. We shall construct $\xi$ such that
\begin{equation}\label{int_u_t_grows}
     \frac{d}{dt}\int_\cube u_t \geq \frac{1}{4} \quad \text{ for } \vert t\vert \ll 1
\end{equation}
and that 
$\int_\cube |u_t-u|$
and $\int_\cube |\nabla u_t| $ are Lipschitz continuous at $t=0$. This will allow us
to define $\tilde{u} \defeq u_t$ for some appropriate $t \in [\,-4m, 4m\,]$ with the desired properties.

\medskip

We now turn to the construction of the vector field $\xi$. In order to flow from positive to negative charges, we want $\nabla \cdot \xi\simeq u-\dashint u$. In particular, a good candidate is the gradient of the potential $\psi$ obtained by solving the following Poisson problem
\begin{equation}\label{eq:poisson}
    \begin{cases}-\Delta \psi &= u-\dashint_\cube u  \hspace{.8cm}\text{in } \cube, \\
    -\nabla \psi \cdot \nu &= 0  \hspace{2cm} \text{on } \partial (\cube). 
    \end{cases}
\end{equation}
Notice that the vector field $- \nabla \psi$ has divergence equal to $u - \dashint_\cube u$.
We reflect $u$ and $\psi$ evenly along the sides of $\cube$ to extend them to the whole of $\R^{d-1}$. The extended $\psi$ still satisfies $-\Delta \psi = u-\dashint_\cube u$. Since $ \sup_{\R^{d-1}}\vert u-\dashint_\cube u \vert \leq 2$, by elliptic regularity (we refer to Section 8.11 in \cite{GilbargTrudinger}),  $\nabla\psi$ is H\"older-$1/2$ continuous. In particular, it is square integrable in the larger cube $\tilde{\cube} \defeq (-1,2)^{d-1}$. We will use the following uniform bounds:
\begin{equation} \label{psi_estimate}
\forall x,y \in \R^{d-1},\, \vert \nabla \psi (y)-\nabla \psi (x)\vert  \lesssim \vert y-x\vert^{\frac{1}{2}}  \quad \text{and} \quad
\int_{\tilde{\cube}}\vert \nabla \psi \vert^2 \lesssim 1.
\end{equation}
 To obtain $\xi$, we cut off and mollify $-\nabla \psi$ to obtain a smooth vector field, compactly supported in $\cube$. 
Given $r\in(0,1/2\,]$, for any function $f$, we denote by $f^r := f * \varphi_r$ the convolution of $f$
with a standard mollifier $\varphi_r$ on the scale $r$. 
Next, fix $\eta^1_r:(0,1)\to [\,0,1\,]$
to be a smooth compactly supported cut-off function such that
$\eta^1_r (s)=1$ for $s\in [r,1-r]$ ,
and $|(\eta^{1}_r)'|\lesssim \frac{1}{r}$. We now define a cut-off function $\eta_r$ on $\cube$ for $x=(x_1,\dots, x_{d-1})\in \cube$ by $\eta_r (x):= \Pi_{i=1}^{d-1} \eta^1_r(x_i)$. Finally we define $\xi$ on $\cube$ by 
\begin{equation}\label{eq:defxi}
\xi := -\eta_r \nabla \psi^r.
\end{equation}
 We  do not stress the dependence of $\xi$ on $r$, as $r$ will later be fixed. There holds 
\begin{equation}\label{div_xi}
    \Div \xi = - \eta_r \Delta \psi^r - \nabla \eta_r \cdot \nabla \psi^r
    \overset{\eqref{eq:poisson}}{=} \eta_r \big(u^r - \dashint_\cube u \big) -  \nabla \eta_r \cdot \nabla \psi^r. 
\end{equation}

\medskip

Let us show that the second term $\nabla \eta_r \cdot \nabla \psi^r$ is small in the $\mathrm{L}^1$-norm, for small $r$. 
Notice that $\nabla \eta_r$ is supported in the set of points lying at distance less than $r$ to $\partial( \cube)$. Consider one of these points $x$. Without loss of generality, we may suppose that there exists $k\in \{1,\dots,d-2\}$ such that $x_i\in (0,r)$ for $i\leq k$ and $x_i \in [\,r,1-r\,]$ for $i >k$. As $x_i\in [\,r,1-r\,]$ implies $(\eta^1_r)'(x_i)= 0$, there holds
\begin{align*}
  \nabla \eta_r \cdot \nabla \psi^r(x) =  \sum_{i= 1}^{k} \partial_i \eta_r (x)  \partial_i\psi^r(x).
\end{align*}
By estimate  \eqref{psi_estimate} and the Neumann boundary condition in \eqref{eq:poisson}, for $i\leq k$, we have
\[
     \vert \partial_i\psi^r(x)\vert \lesssim r^{\frac{1}{2}}.
\]
Together with $|\nabla \eta_r| \lesssim 1/r$, we thus obtain
\begin{equation}\nonumber
 \vert  \nabla \eta_r (x)\cdot \nabla \psi^r(x)\vert \lesssim \dfrac{1}{r^{1/2}}.
\end{equation}
The set of all such $x$ has area of order $r$. We thus have
\begin{equation}\label{product_L1}
  \int_{\cube} \vert \nabla \eta_r \cdot \nabla \psi^r\vert \lesssim r^{\frac{1}{2}}.
\end{equation}

\medskip

On the one hand, by the convolution estimate and recalling the extension of $u$ by even reflection,
\begin{equation}\label{uurnablau}
  \int_{\cube}\vert u^r-u\vert \lesssim r \int_{\cube} \vert \nabla u\vert \leq r \Lambda
\end{equation}
and on the other hand, as $\vert u\vert \leq 1$,
\begin{equation}\label{eta-1u}
  \int_{\cube} \Big \vert (\eta_r-1) \big(u^r-{\dashint}_\cube u\big )\Big\vert \lesssim r.
\end{equation}
We infer
\begin{align}
    \int_{\cube} \Big\vert \Div \xi -\big(u-\dashint_\cube u\big)\Big\vert &\overset{\eqref{div_xi}}{\leq}\int_{\cube} \Big\vert \eta_r \big(u^r - \dashint_\cube u \big) -  \nabla \eta_r \cdot \nabla \psi^r -\big(u-\dashint_\cube u\big)\Big\vert
\nonumber \\
&\hspace{-1.5cm}\leq \int_{\cube} \vert u^r-u\vert + \int_{\cube} \Big\vert (\eta_r-1)\big(u^r - \dashint_\cube u \big)\Big \vert +  \int_{\cube}\vert \nabla \eta_r \cdot \nabla \psi^r\vert 
\overset{\eqref{uurnablau},\eqref{eta-1u},\eqref{product_L1}}{\lesssim_\Lambda} r^{\frac{1}{2}}. \label{B_L_1_est}
\end{align}

\medskip

Now that we have this control over $\xi$, let us prove that its flow modifies the global charge in the desired way \eqref{int_u_t_grows}. By a change of variables,
\begin{equation*}
    \int_\cube u_t = \int_\cube u \det \mathrm{D}\Phi_t.
\end{equation*}
Using Liouville's formula to differentiate the determinant 
and operating the converse change of variables yields
\begin{align} \label{d_t_u_t}
    \frac{d}{dt}\int_\cube u_t = \int_\cube  u (\Div \xi )\circ \Phi_t \det \mathrm{D}\Phi_t
    = \int_\cube u_t \Div \xi.
\end{align}
Hence at $t=0$, using \eqref{B_L_1_est} and the fact that $|u| =1$, we obtain
\begin{equation*}
   \left \vert \left. \Frac{d}{dt}\right\vert_{t=0}\int_\cube u_t - \int_\cube u \big(u- \dashint_\cube u \big) \right \vert \lesssim_\Lambda r^{\frac{1}{2}},
\end{equation*}
so together with the assumption \eqref{control_avg}, we get 
\begin{equation*}
  \dfrac{3}{4} -  \left.\Frac{d}{dt}\right\vert_{t=0}\int_\cube u_t \lesssim_\Lambda r^{\frac{1}{2}}.
\end{equation*}
Hence we now may fix $r>0$ so small that 
\[
    \left.\Frac{d}{dt}\right\vert_{t=0}\int_\cube u_t \geq \dfrac{1}{2}.
\]

It remains to prove that \eqref{int_u_t_grows} holds also for $t$ small enough and not just at $t=0$. We postpone this and start by proving that the total variation of $u_t$ is uniformly bounded. By \cite[Theorem 17.5]{Maggi_Book}, or by a standard generalization
of \cite[Theorem 10.4]{Giusti_Book}, the first variation of the total variation of $u_t$ at time $t$ along the flow of $\xi$ is equal to 
\begin{equation*}
  \frac{d}{dt}\int_\cube |\nabla u_t| = -\int_\cube \left (\nabla_{tan} \cdot \xi \right ) |\nabla u_t|,
\end{equation*}
where $\nabla_{tan} \cdot \xi$ is the tangential divergence  of $\xi$ along the reduced boundary of the set of finite perimeter $\{u_t=-1\}$.
Since $r$ is now fixed and depends only and $\Lambda$ and $d$, we obtain from \eqref{eq:defxi}, the fact that $\psi^r = \psi * \varphi_r$ and the H\"older inequality that
\begin{equation}
  \label{eq:boundxi}
  \sup_\cube \vert \xi\vert \lesssim_\Lambda\  \Big ( \sup_\cube \vert \nabla \psi\vert^2\Big)^{\frac{1}{2}}\  \overset{\eqref{psi_estimate}}{\lesssim_\Lambda}\  1 \quad \text{and} \quad
    \sup_\cube \vert \mathrm{D}\xi \vert\lesssim_\Lambda 1.
\end{equation}
  Hence  
\begin{equation*}
\Big \vert  \frac{d}{dt}\int_\cube|\nabla u_t| \Big \vert\lesssim_\Lambda \int_\cube|\nabla u_t|
\end{equation*}
and thus for $t\in[\,-1,1\,]$
\begin{equation}\label{eq:varinterface}
\int_\cube|\nabla u_t| - \int_\cube|\nabla u|\lesssim_\Lambda t.
\end{equation}

\medskip

Let us now go back to proving \eqref{int_u_t_grows} for small $t$. It is enough to prove that the function $ t\mapsto \frac{d}{dt}\int_\cube u_t$ is Lipschitz continuous at $t=0$, 
with a Lipschitz constant depending only on the total variation of $u$ (the bound $\Lambda$ from the statement). Indeed, by \eqref{d_t_u_t},
\begin{equation}\nonumber
 \Big|  \frac{d}{dt}\int_\cube u_t  - \frac{d}{dt}\Big\vert_{t=0}\int_\cube u_t  \Big| 
 \leq \int_\cube |u_s-u| |\Div \xi|
\overset{\eqref{eq:boundxi}}{\lesssim_\Lambda} \int_\cube |u_s-u| .
\end{equation}
Hence $t \mapsto \frac{d}{dt}\vert\int_\cube u_t\vert $  is Lipschitz continuous
at $t=0$ provided the function $t\mapsto\int_\cube |u_t-u|$ is as well. To this purpose, we make use of
\begin{equation}\nonumber
 \int_\cube |u_s-u| = \sup_{\zeta\in C^{\infty}_0(\cube), \vert \zeta \vert \leq 1} \int_\cube  (u_s-u)\zeta.
\end{equation}
In particular, it suffices to show that the functions $t\mapsto \int_\cube  (u_t-u)\zeta$ are uniformly Lipschitz continuous at $t=0$. For this, we note that by a similar argument as for \eqref{d_t_u_t}, we have
\[
    \frac{d}{dt} \int_\cube (u_t-u) \zeta = \frac{d}{dt} \int_\cube u_t \zeta  = \int_\cube u_t \Div(\zeta \xi)
\]
and thus
\[
   \Big \vert \frac{d}{dt} \int_\cube (u_t-u) \zeta\Big \vert \leq \sup_\cube \vert \zeta \xi\vert \int_\cube \vert \nabla u_t\vert \overset{\eqref{eq:varinterface},\eqref{control_perim}}{\lesssim_\Lambda} \sup_\cube \vert \xi \vert \ \overset{\eqref{eq:boundxi}}{\lesssim_\Lambda} \ 1.
\]
\end{proof}
\begin{proof}[Proof of Lemma \ref{control_u_avg}]
Let us choose a smooth cut-off function $\eta$ compactly supported in $Q_\lambda \cup \underline{Q_\lambda}$ and such that 
 $\dashint_{\underline{Q_\lambda}}(1-\eta) \leq 1/4$, while $\vert \nabla \eta\vert \lesssim 1/\lambda$. On the one hand, this implies
\[
   \Big \vert \dashint_{\underline{Q_\lambda}} u\Big \vert  \leq \Big \vert \dashint_{\underline{Q_\lambda}}\eta u  \Big \vert + \frac{1}{4},
\]
so that it is enough to establish
\[
   \Big \vert \dashint_{\underline{Q_\lambda}}\eta u  \Big \vert \leq \frac{1}{4}.
\]
On the other hand we obtain from $\eqref{eq:constraints}$ 
\[
    \int_{\underline{Q_\lambda}}\eta u 
= -\int_{Q_\lambda} \nabla \eta\cdot \bb
\]
and as $\vert \nabla \eta\vert \lesssim 1/\lambda$, by the H\"older inequality and Theorem \ref{uniform_bound}, we have
\[
    \Big \vert \dashint_{\underline{Q_\lambda}}\eta u  \Big \vert \lesssim \left ( \dfrac{1}{\lambda^d} \int_{Q_\lambda} \vert \bb \vert^2 \right )^{1/2}\lesssim \dfrac{1}{\lambda^{1/2}},
\]
so that the conclusion follows provided $\lambda \gg 1$.
\end{proof}

\section*{Acknowledgements} We thank Tobias Ried for helpful comments on the manuscript and Emanuele Spadaro for suggesting the reference \cite{EspoFusc2011remark}. A.~J.~gratefully acknowledges the hospitality of the Max Planck Institute in Leipzig.

\bibliographystyle{amsplain}
\bibliography{EnergyDistrib}

\end{document}